\documentclass[final,reqno]{amsart}
\usepackage[margin=1.5in,bottom=1.25in]{geometry}	
\usepackage[pagewise]{lineno}
\usepackage{bm}

\usepackage{amsmath}	
\usepackage{amssymb}	
\usepackage{amsfonts}	
\usepackage{amsthm}	
\usepackage[foot]{amsaddr}	
\usepackage{empheq}
\usepackage{breqn}
\usepackage[titletoc,toc]{appendix}

\mathtoolsset{%
}

\usepackage[utf8]{inputenc}	
\usepackage[T1]{fontenc}	

\usepackage[
cal=cm,
]
{mathalfa}

\usepackage{dsfont}	
\newcommand{\BF}{\textbf}

\usepackage[light,scaled=.95]{AlegreyaSans}

\usepackage[proportional,tabular,lining,sf,mono=false]{libertine}

\usepackage{acronym}	

\usepackage[labelfont={bf,small},labelsep=colon,font=small]{caption}	

\usepackage{subcaption}	
\usepackage[svgnames]{xcolor}	
\colorlet{MyRed}{Crimson!60!DarkRed}
\colorlet{MyBlue}{DodgerBlue!75!black}
\colorlet{MyGreen}{DarkGreen}
\colorlet{MyViolet}{DarkMagenta}

\colorlet{MyLightBlue}{DodgerBlue!20}
\colorlet{MyLightGreen}{MyGreen!20}

\colorlet{PrimalColor}{MyBlue}
\colorlet{PrimalFill}{MyLightBlue}
\colorlet{DualColor}{MyRed}

\colorlet{AlertColor}{MyRed}	
\colorlet{BadColor}{MyRed}	
\colorlet{GoodColor}{MyGreen}	
\colorlet{LinkColor}{MediumBlue}	
\colorlet{MacroColor}{MyViolet}
\colorlet{RevColor}{MediumBlue}	

\usepackage{latexsym}	
\usepackage{fontawesome}	
\usepackage{pifont}	

\usepackage{tikz}	
\usetikzlibrary{calc,patterns}	
\usepackage{pgfplots}
\pgfplotsset{compat=1.18}
\usetikzlibrary{patterns, pgfplots.fillbetween}

\usepackage{array}	
\usepackage{booktabs}	
\usepackage[inline,shortlabels]{enumitem}	

\setlist[1]{topsep=\smallskipamount,itemsep=\smallskipamount,left=\parindent}
\setlist[2]{left=0pt}

\usepackage[kerning=true]{microtype}	
\usepackage{ifluatex}
\ifluatex
  \usepackage{microtype}
  \DisableLigatures{encoding = *, family = *}
\else
  \usepackage{microtype}
\fi

\usepackage{tabto}	
\usepackage{xspace}	

\usepackage[sort&compress]{natbib}	

\bibpunct[, ]{[}{]}{,}{}{,}{,}
\setcitestyle{numbers,square}

\usepackage{hyperref}
\hypersetup{
final,
colorlinks=true,
linktocpage=true,
pdfstartview=FitH,
breaklinks=true,
pdfpagemode=UseNone,
pageanchor=true,
pdfpagemode=UseOutlines,
plainpages=false,
bookmarksnumbered,
bookmarksopen=false,
bookmarksopenlevel=1,
hypertexnames=true,
pdfhighlight=/O,
urlcolor=LinkColor,linkcolor=LinkColor,citecolor=LinkColor,	
pdftitle={},
pdfauthor={},
pdfsubject={},
pdfkeywords={},
pdfcreator={pdfLaTeX},
pdfproducer={LaTeX with hyperref}
}

\usepackage[sort&compress,capitalize,nameinlink]{cleveref}	
\crefname{assumption}{Assumption}{Assumptions}
\crefname{case}{Case}{Cases}
\crefname{cond}{Condition}{Conditions}
\crefname{noref}{}{}
\crefname{problem}{Problem}{Problems}

\makeatletter	
\DeclareRobustCommand{\crefnosort}[1]{%
  \begingroup\@cref@sortfalse\cref{#1}\endgroup
}
\makeatother

\usepackage{algpseudocode}	
\usepackage[]{algorithm2e}
\crefname{algocf}{Algorithm}{Algorithms}

\theoremstyle{plain}
\newtheorem{theorem}{Theorem}	
\newtheorem{corollary}{Corollary}	
\newtheorem{lemma}{Lemma}	
\newtheorem{proposition}{Proposition}	

\newtheorem*{theorem*}{Theorem}	
\newtheorem*{corollary*}{Corollary}	

\theoremstyle{definition}
\newtheorem{assumption}{Assumption}	
\newtheorem{definition}{Definition}	
\newtheorem{example}{\raisebox{\depth}{$\blacktriangleright$}~Example}	

\newtheorem*{assumption*}{Assumptions}	
\newtheorem*{definition*}{Definition}	
\newtheorem*{example*}{\raisebox{\depth}{$\blacktriangleright$}~Example}	

\theoremstyle{remark}
\newtheorem{remark}{Remark}	

\newtheorem*{remark*}{Remark}	
\newtheorem*{notation*}{Notation}	

\newcounter{proofstep}

\numberwithin{remark}{section}	
\numberwithin{example}{section}	

\usepackage[showdeletions,suppress]{color-edits}	
\newcommand{\draft}[1]{#1}	

\newcommand{\newmacro}[2]{\newcommand{#1}{\draft{#2}}}	
\newcommand{\newop}[2]{\DeclareMathOperator{#1}{\draft{#2}}}	
\newcommand{\newoplims}[2]{\DeclareMathOperator*{#1}{\draft{#2}}}	
\newcommand{\newsmartmacro}[2]{
	\NewDocumentCommand{#1}{
		E{_}{{}}
	}{
		\draft{#2_{##1}}
	}
}

\newcommand{\eps}{\varepsilon}	
\newcommand{\wilde}{\widetilde}	

\DeclarePairedDelimiterX{\setdef}[2]{\{}{\}}{#1:#2}	
\DeclarePairedDelimiterXPP{\exclude}[1]{\mathopen{}\setminus}{\{}{\}}{}{#1}

\DeclarePairedDelimiterX{\braket}[2]{\langle}{\rangle}{#1,#2}	
\DeclarePairedDelimiterX{\internalInner}[1]{\langle}{\rangle}{#1}
\usepackage{xparse}
\NewDocumentCommand \inner {s m g}{
	\IfBooleanTF{#1}
	{
	\internalInner*{#2\IfValueT{#3}{,#3}}
	}
	{
	\internalInner{#2\IfValueT{#3}{,#3}}
	}
}

\DeclarePairedDelimiterXPP{\dnorm}[1]{}{\lVert}{\rVert}{_{\ast}}{#1}	
\DeclarePairedDelimiterXPP{\onenorm}[1]{}{\lVert}{\rVert}{_{1}}{#1}	
\DeclarePairedDelimiterXPP{\twonorm}[1]{}{\lVert}{\rVert}{_{2}}{#1}	
\DeclarePairedDelimiterXPP{\supnorm}[1]{}{\lVert}{\rVert}{_{\infty}}{#1}	

\newmacro{\nat}{i}	
\newmacro{\natA}{i}	
\newmacro{\natB}{j}	
\newmacro{\natC}{k}	
\newmacro{\nats}{\mathbb{N}}	
\newmacro{\N}{\nats}	

\newmacro{\integer}{a}	
\newmacro{\intA}{a}	
\newmacro{\intB}{b}	
\newmacro{\intC}{c}	
\newmacro{\integers}{\mathbb{Z}}	
\newmacro{\Z}{\integers}	

\newmacro{\rational}{r}	
\newmacro{\ratA}{r}	
\newmacro{\ratB}{s}	
\newmacro{\ratC}{t}	
\newmacro{\rationals}{\mathbb{Q}}	
\newmacro{\Q}{\mathcal Q}	

\newmacro{\real}{x}	
\newmacro{\realA}{x}	
\newmacro{\realB}{y}	
\newmacro{\realC}{z}	
\newmacro{\reals}{\mathbb{R}}	
\newmacro{\R}{\reals}	

\newmacro{\complex}{z}	
\newmacro{\complexA}{z}	
\newmacro{\complexB}{w}	
\newmacro{\complexC}{z}	
\newmacro{\complexes}{\mathbb{C}}	
\newmacro{\C}{\mathbb{C}}	

\newoplims{\argmax}{arg\,max}	
\newoplims{\argmin}{arg\,min}	
\newoplims{\intersect}{\bigcap}	
\newoplims{\union}{\bigcup}	

\newop{\aff}{aff}	
\newop{\bd}{bd}	
\newop{\bigoh}{\mathcal{O}}	
\newop{\card}{card}	
\newop{\cl}{cl}	
\newop{\conv}{conv}	
\newop{\clconv}{\overline{conv}}	
\newop{\crit}{crit}	
\newop{\diag}{diag}	
\newop{\diam}{diam}	
\newop{\dist}{dist}	
\newop{\dom}{dom}	
\newop{\gra}{\textup{gra}\:}
\newop{\zer}{\textup{zer}}
\newop{\Fix}{\textup{Fix}}
\newop{\eig}{eig}	
\newop{\ess}{ess}	
\newop{\Hess}{Hess}	
\newop{\ind}{ind}	
\newop{\im}{im}	
\newop{\intr}{int}	
\newop{\Jac}{Jac}	
\newop{\one}{\mathds{1}}	
\newop{\proj}{\textbf{Proj}}	
\newop{\prox}{prox}	
 \newop{\rank}{rank}	
\newop{\relint}{ri}	
\newop{\sign}{sgn}	
\newop{\supp}{supp}	
\newop{\Sym}{Sym}	
\newop{\tr}{tr}	
\newop{\unif}{unif}	
\newop{\vol}{vol}	

\newcommand{\id}{\mathrm{id}}
\newcommand{\Id}{\mathrm{Id}}
\newcommand{\Bm}{\textup{B}_{\mm}}	
\newcommand{\Bx}{\textup{B}_{\mm_x}}	
\newcommand{\By}{\textup{B}_{\mm_y}}	
\newcommand{\Bz}{\textup{B}_{\mm_z}}	
\newcommand{\Tm}{\textup{T}^{\mm}_{\lambda,\gamma}}

\newcommand{\Bxb}{\textup{B}_{\mm_{\bar{x}}}}		
\newcommand{\Txb}{\textup{T}^{\mm_{\bar x}}_{\lambda,\gamma}}
	
\newcommand{\Txbtt}{\textup{T}^{\mm_{x(t)}}_{\lambda(t),\gamma(t)}}	
\newcommand{\Exb}{\textup{E}^{\bar x}_{\lambda,\gamma}}
	
\newcommand{\Resolvent}[2]{J_{#1 #2}}

\newcommand{\opTx}[2]{\textup{T}^{\mm_{x}}_{#1, #2}}
\newcommand{\opT}[3]{\textup{T}^{\mm_{#1}}_{#2, #3}}

\newcommand{\T}{\textup{T}}
\newcommand{\Err}{\textup{E}}
\newcommand{\tmu}{\tilde{\mu}}

\newcommand{\Exbar}[2]{\textup{E}^{\bar x}_{#1, #2}}
\newcommand{\xbar}{\bar x}

\newop{\II}{\mathcal{I}}	
\newop{\J}{\mathcal{J}}	

\newcommand{\cf}{cf.\xspace}	
\newcommand{\eg}{e.g.,\xspace}	
\newcommand{\ie}{i.e.,\xspace}	

\newcommand{\alt}[1]{#1'}	
\newcommand{\altalt}[1]{#1''}	

\newmacro{\ball}{\mathbb{B}}	
\newmacro{\clball}{\overline{\mathbb{B}}}	
\newmacro{\sphere}{\mathbb{S}}	

\newmacro{\argdot}{\kern.5pt\boldsymbol{\cdot}\kern.5pt}	
\newmacro{\ddt}{\frac{d}{dt}}	
\newmacro{\del}{\partial}	

\newmacro{\const}{c}	
\newmacro{\constA}{a}	
\newmacro{\constB}{b}	
\newmacro{\Const}{C}	

\newmacro{\param}{\theta}	
\newmacro{\params}{\Theta}	

\newmacro{\coef}{\alpha}	
\newmacro{\coefA}{\lambda}	
\newmacro{\coefB}{\mu}	
\newmacro{\coefC}{\nu}	

\newmacro{\expA}{p}	
\newmacro{\expB}{q}	
\newmacro{\expC}{r}	

\newmacro{\precs}{\eps}		
\newmacro{\precsalt}{\delta}	

\newmacro{\asympteq}{\asymp}

\newmacro{\func}{g} 

\newmacro{\realspace}{\R^{\vdim}}	
\newmacro{\vecspace}{\mathcal{X}}	
\newmacro{\dspace}{\R^{\vdim}}	
\newmacro{\subspace}{\mathcal{W}}	

\newmacro{\coord}{i}	
\newmacro{\coordA}{i}	
\newmacro{\coordB}{j}	
\newmacro{\coordC}{k}	
\newmacro{\nCoords}{d}	
\newmacro{\dims}{\nCoords}	
\newmacro{\vdim}{\nCoords}	

\newmacro{\bvec}{e}	
\newmacro{\uvec}{u}	
\newmacro{\bvecs}{\mathcal{E}}	

\newmacro{\point}{x}	
\newmacro{\pointA}{\point}	
\newmacro{\pointB}{\point'}	
\newmacro{\pointalt}{\pointB}
\newmacro{\pointC}{\point''}	
\newmacro{\pointaltalt}{\pointC}
\newmacro{\points}{\mathcal{X}}	
\newmacro{\intpoints}{\relint\points}	

\newmacro{\base}{p}	
\newmacro{\baseA}{q}	
\newmacro{\baseB}{q}	
\newmacro{\baseC}{u}	

\newmacro{\set}{\mathcal{K}}	
\newmacro{\setA}{\set}	
\newmacro{\setB}{\alt\set}	
\newmacro{\setC}{\altalt\set}	

\newmacro{\idx}{i}
\newmacro{\idxalt}{j}
\newmacro{\idxaltalt}{l}
\newmacro{\idxaltaltalt}{k}
\newmacro{\indices}{I}
\newmacro{\indicesalt}{J}

\newmacro{\closed}{\mathcal{C}}	
\newmacro{\cpt}{\mathcal{K}}	
\newmacro{\cptalt}{\alt\cpt}	
\newmacro{\nhd}{\mathcal{U}}	
\newmacro{\nhdalt}{\W}	
\newmacro{\nhdaltalt}{\V}	
\newmacro{\nbd}{\nhd}
\newmacro{\nbdalt}{\nhdalt}
\newmacro{\nbdaltalt}{\nhdaltalt}
\newmacro{\U}{\mathcal{U}}	
\newmacro{\V}{\mathcal{V}}	
\newmacro{\W}{\mathcal{W}}	
\newmacro{\open}{\mathcal{U}}	
\newmacro{\openA}{\mathcal{U}}	
\newmacro{\openB}{\mathcal{V}}	

\newmacro{\mfld}{\mathcal{M}}	
\newmacro{\gmat}{g}	
\newmacro{\gdist}{\dist_{\gmat}}	

\newmacro{\tvec}{z}	
\newmacro{\form}{\omega}	

\newmacro{\radius}{r}
\newmacro{\Radius}{R}
\newmacro{\Radiusalt}{\widetilde{\Radius}}
\newmacro{\radiusalt}{\alt\radius}
\newmacro{\margin}{\delta}
\newmacro{\marginalt}{\alt\margin}
\newmacro{\Margin}{\Delta}
\newmacro{\connectedcomp}{\mathcal{K}}
\newmacro{\plainset}{S} 
\newmacro{\interv}{A} 
\newmacro{\domain}{D}
\newmacro{\bigcpt}{\mathcal{D}}
\newsmartmacro{\bigcptalt}{\alt\bigcpt}
\newsmartmacro{\bigcptaltalt}{\alt\alt\bigcpt}

\newmacro{\cvx}{\mathcal{C}}	
\newmacro{\subd}{\partial}	

\newop{\tspace}{T}	
\newop{\tcone}{TC}	
\newop{\dcone}{\tcone^{\ast}}	
\newop{\ncone}{NC}	
\newop{\pcone}{PC}	
\newop{\hull}{\Delta}	

\newop{\minimize}{minimize}	
\newop{\Opt}{Opt}	
\newop{\Sol}{Sol}	
\newop{\gap}{Gap}	
\newop{\orcl}{\mathsf{G}}	
\newop{\err}{\mathsf{Z}}	

\newmacro{\obj}{f}	
\newmacro{\objalt}{g}	
\newmacro{\objA}{f}	
\newmacro{\objB}{g}	
\newmacro{\sobj}{F}	

\newcommand{\sol}[1][\point]{#1^{\ast}}	

\newmacro{\gvec}{g}	
\newmacro{\gbound}{G}	

\newmacro{\oper}{A}	
\newmacro{\vecfield}{v}	
\newmacro{\vbound}{V}	

\newmacro{\lips}{L}	
\newmacro{\strong}{\alpha}	
\newmacro{\smooth}{\beta}	
\newmacro{\tmplips}{L}	
\newmacro{\tmpbound}{B}	
\newmacro{\growth}{M}	
\newmacro{\regparam}{\lambda}	

\newop{\ex}{\mathbb{E}}	
\newop{\prob}{\mathbb{P}}	
\newop{\probalt}{\mathbb{Q}}	
\newop{\var}{\mathbb{V}}	
\newop{\cov}{cov}	
\newop{\simplex}{\hull}	

\DeclarePairedDelimiterXPP{\exof}[1]{\ex}{[}{]}{}{
 #1}

\DeclarePairedDelimiterXPP{\exwrt}[2]{\ex_{#1}}{[}{]}{}{
 #2}

\DeclarePairedDelimiterXPP{\probof}[1]{\prob}{(}{)}{}{
 #1}

\DeclarePairedDelimiterXPP{\probwrt}[2]{\prob_{\!#1}}{(}{)}{}{
 #2}

\DeclarePairedDelimiterXPP{\oneof}[1]{\one}{\{}{\}}{}{#1}	

\DeclarePairedDelimiterXPP{\varof}[1]{\var}{[}{]}{}{
 #1}

\DeclarePairedDelimiterXPP{\covof}[1]{\cov}{(}{)}{}{
 #1}

\newmacro{\event}{E}	
\newmacro{\eventA}{E}	
\newmacro{\eventB}{H}	

\newmacro{\seed}{\theta}	
\newmacro{\seeds}{\Theta}	
\newmacro{\pdist}{P}	
\newmacro{\history}{\mathcal{H}}	

\newmacro{\sample}{\omega}	
\newmacro{\samples}{\Omega}	
\newmacro{\sspace}{\R^{m}}	

\newmacro{\filter}{\mathcal{F}}	
\newmacro{\probspace}{(\samples,\filter,\prob)}	

\newmacro{\mean}{\mu}	
\newmacro{\sdev}{\sigma}	
\newmacro{\variance}{\sdev^{2}}	
\newmacro{\variancealt}{s^2}

\newmacro{\covmat}{\Sigma}
\newmacro{\hessmat}{H}

\newmacro{\rv}{X}
\newmacro{\trv}{X_\Radius}
\newmacro{\gaussian}{\mathcal{N}}

\newmacro{\partition}{Z}

\newmacro{\nSamples}{n}	
\newmacro{\datapoint}{\xi}

\newmacro{\beforestart}{-1}	
\newmacro{\start}{0}	
\newmacro{\afterstart}{1}	
\newmacro{\running}{\start,\afterstart,\dotsc}	

\newmacro{\run}{n}	
\newmacro{\runA}{n}	
\newmacro{\runB}{k}	
\newmacro{\runC}{\ell}	
\newmacro{\nRuns}{N}	
\newmacro{\nRunsalt}{\alt\nRuns}	
\newmacro{\runs}{\mathcal{\nRuns}}	
\newmacro{\runalt}{\runB}	

\newmacro{\tstart}{0}	
\newmacro{\timeA}{t}	
\newmacro{\timeB}{s}	
\newmacro{\timealt}{\timeB}	
\newmacro{\timealtalt}{u}
\newmacro{\timeC}{\tau}	
\newmacro{\timeD}{\lambda}	
\newmacro{\horizon}{T}	
\newmacro{\horizonalt}{S}	

\newmacro{\seq}{a}	
\newmacro{\seqA}{a}	
\newmacro{\seqB}{b}	
\newmacro{\seqC}{c}	

\newmacro{\state}{x}	
\newsmartmacro{\accstate}{x^{\step}}	
\newmacro{\stateA}{x}	
\newmacro{\stateB}{z}	
\newmacro{\statealt}{\stateB}
\newmacro{\stateC}{y}	
\newmacro{\stateD}{p}	
\newmacro{\statealtalt}{\stateC}
\newmacro{\statealtaltalt}{\stateD}

\newmacro{\cstate}{X}
\newmacro{\cstateA}{X}
\newmacro{\cstateB}{Z}
\newmacro{\cstatealt}{\cstateB}

\newmacro{\startingpoint}{\point_\start}	

\newcommand{\curr}[1][\state]{\draft{#1_{\run}}}	

\newmacro{\mat}{M}	
\newmacro{\hmat}{H}	

\newmacro{\ones}{\mathbf{1}}	
\newmacro{\eye}{I}	
\newmacro{\identity}{\eye}	

\newmacro{\eigval}{\lambda}	

\newop{\Nash}{NE}	
\newop{\CE}{CE}	
\newop{\CCE}{CCE}	
\newop{\NI}{NI}	

\newop{\brep}{br}	
\newop{\reg}{Reg}	
\newop{\preg}{\overline{Reg}}	
\newop{\val}{val}	

\newmacro{\play}{i}	
\newmacro{\playA}{i}	
\newmacro{\playB}{j}	
\newmacro{\playC}{k}	
\newmacro{\nPlayers}{N}	
\newmacro{\players}{\mathcal{\nPlayers}}	

\newmacro{\pure}{\alpha}	
\newmacro{\pureA}{\alpha}	
\newmacro{\pureB}{\beta}	
\newmacro{\pureC}{\gamma}	
\newmacro{\nPures}{A}	
\newmacro{\pures}{\mathcal{\nPures}}	

\newmacro{\strat}{x}	
\newmacro{\stratA}{x}	
\newmacro{\stratB}{\stratA'}	
\newmacro{\stratC}{\stratA''}	
\newmacro{\strats}{\mathcal{X}}	
\newmacro{\intstrats}{\strats^{\circle}}	

\newmacro{\pay}{u}	
\newmacro{\payv}{v}	
\newmacro{\payfield}{v}	
\newmacro{\loss}{\ell}	

\newmacro{\game}{\mathcal{G}}	
\newmacro{\gameFull}{\game(\players,\points,\pay)}	

\newmacro{\fingame}{\Gamma}	
\newmacro{\fingameFull}{\Gamma(\players,\pures,\pay)}	

\newmacro{\minmax}{L}	
\newmacro{\minvar}{\point_{1}}	
\newmacro{\minvarA}{\point_{1}}	
\newmacro{\minvarB}{\minvarA'}	
\newmacro{\minvars}{\points_{1}}	
\newmacro{\maxvar}{\point_{2}}	
\newmacro{\maxvarA}{\point_{2}}	
\newmacro{\maxvarB}{\maxvarA'}	
\newmacro{\maxvars}{\points_{2}}	

\newmacro{\pot}{U}	

\newmacro{\gradientbound}{16 \dims \log 6}	

\newmacro{\hreg}{h}	
\newmacro{\breg}{D}	
\newmacro{\mprox}{P}	
\newmacro{\mirror}{Q}	
\newmacro{\fench}{F}	
\newmacro{\hstr}{K}	
\newmacro{\hrange}{H}	
\newmacro{\proxdom}{\points^{\hreg}}	

\DeclarePairedDelimiterXPP{\bregof}[2]{\breg}{(}{)}{}{#1,#2}	
\DeclarePairedDelimiterXPP{\proxof}[2]{\mprox_{#1}}{(}{)}{}{#2}	

\newmacro{\dpoint}{y}	
\newmacro{\dpointA}{y}	
\newmacro{\dpointB}{\dpointA'}	
\newmacro{\dpointC}{\dpointA''}	
\newmacro{\dpoints}{\mathcal{Y}}	

\newmacro{\dstate}{Y}	
\newmacro{\dvec}{w}	

\newmacro{\zone}{\mathbb{D}}	

\newop{\Eucl}{\Pi}	
\newop{\logit}{softmax}	
\newop{\dkl}{KL}	

\newmacro{\flowmap}{\Theta}	
\DeclarePairedDelimiterXPP{\flowof}[2]{\flowmap_{#1}}{(}{)}{}{#2}	
\DeclarePairedDelimiterXPP{\dotflowof}[2]{\dot\flowmap_{#1}}{(}{)}{}{#2}	

\newmacro{\traj}{x}	
\DeclarePairedDelimiterXPP{\trajof}[1]{\traj}{(}{)}{}{#1}	
\DeclarePairedDelimiterXPP{\difftrajof}[1]{\dot\traj}{(}{)}{}{#1}	

\newcommand{\est}[1]{\hat #1}	

\newmacro{\signal}{\est\gvec}	
\newmacro{\step}{\eta}	
\newmacro{\learn}{\eta}	
\newmacro{\tempinv}{\beta}	
\newmacro{\batch}{B}
\newmacro{\batchidx}{b}

\newmacro{\efftime}{\tau}	

\newmacro{\error}{Z}	

\newmacro{\noise}{\mathsf{U}}	
\newmacro{\snoise}{\xi}	
\newmacro{\noisepar}{\sdev}	
\newmacro{\noisevar}{\variance}	
\newmacro{\aggnoise}{\mathrm{\uppercase\expandafter{\romannumeral1}}}	
\newmacro{\supnoise}{\aggnoise_{\infty}}	
\newmacro{\maxnoise}{\aggnoise^{\ast}}	

\newmacro{\bias}{b}	
\newmacro{\drift}{b}	
\newmacro{\bbound}{B}	
\newmacro{\sbias}{\chi}	
\newmacro{\aggbias}{\mathrm{\uppercase\expandafter{\romannumeral2}}}	
\newmacro{\supbias}{\aggbias_{\infty}}	
\newmacro{\maxbias}{\aggbias^{\ast}}	

\newmacro{\sbound}{M}	
\newmacro{\aggsecond}{\mathrm{\uppercase\expandafter{\romannumeral3}}}	
\newmacro{\supsecond}{\aggsecond_{\infty}}	
\newmacro{\maxsecond}{\aggsecond^{\ast}}	

\newmacro{\mix}{\delta}	
\newmacro{\unitvec}{w}	
\newmacro{\unitvar}{W}	
\newmacro{\perturb}{z}	

\newmacro{\purequery}{\est\pure}	
\newmacro{\query}{\est\state}	
\newmacro{\pivot}{\point}	
\newmacro{\querypoint}{\est\point}	

\newmacro{\vertex}{v}	
\newmacro{\vertexA}{v}	
\newmacro{\vertexB}{w}	
\newmacro{\vertexC}{u}	
\newmacro{\nVertices}{V}	
\newmacro{\vertices}{\mathcal{V}}	

\newmacro{\edge}{e}	
\newmacro{\edgeA}{e}	
\newmacro{\edgeB}{\edgeA'}	
\newmacro{\edgeC}{\edgeA''}	
\newmacro{\nEdges}{E}	
\newmacro{\edges}{\mathcal{\nEdges}}	

\newmacro{\graph}{\mathcal{G}}	
\newmacro{\graphFull}{\graph(\vertices,\edges)}	
\newmacro{\adjmat}{A}	
\newmacro{\wmat}{W}	

\newmacro{\tree}{T}
\newmacro{\treealt}{\alt\tree}
\newmacro{\trees}{\mathcal{T}}
\newmacro{\treesalt}{\wilde\trees}

\newmacro{\mgf}{M}	
\DeclarePairedDelimiterXPP{\mgfof}[2]{\mgf_{#1}}{(}{)}{}{#2}	

\newmacro{\cgf}{K}	
\DeclarePairedDelimiterXPP{\cgfof}[2]{\cgf_{#1}}{(}{)}{}{#2}	

\newmacro{\ham}{\mathcal{H}}	
\DeclarePairedDelimiterXPP{\hamof}[2]{\ham_{#1}}{(}{)}{}{#2}	

\newmacro{\lag}{\mathcal{L}}	
\DeclarePairedDelimiterXPP{\lagof}[2]{\lag_{#1}}{(}{)}{}{#2}	

\newmacro{\mom}{p}
\newmacro{\pos}{q}
\newmacro{\vel}{v}
\newmacro{\velalt}{w}

\newmacro{\hamilt}{\mathcal{H}}
\newmacro{\hamiltalt}{\bar\hamilt}

\newmacro{\lagrangian}{\mathcal{L}}
\newmacro{\lagrangianalt}{\bar\lagrangian}

\newmacro{\curve}{\gamma}	
\DeclarePairedDelimiterXPP{\curveat}[1]{\curve}{(}{)}{}{#1}	
\DeclarePairedDelimiterXPP{\diffcurveat}[1]{\dot\curve}{(}{)}{}{#1}	

\newmacro{\curves}{\Gamma}
\DeclarePairedDelimiterXPP{\curvesat}[3]{\curves_{#1}}{(}{)}{}{#2;#3}	

\newmacro{\contcurves}{\contfuncs}
\DeclarePairedDelimiterXPP{\contcurvesat}[2]{\contcurves_{#1}}{(}{)}{}{#2}	

\newmacro{\lint}{\cstate}	
\DeclarePairedDelimiterXPP{\lintat}[1]{\lint}{(}{)}{}{#1}	

\newmacro{\qpot}{B}	
\newmacro{\qmat}{B}	
\newmacro{\energy}{E}	

\newmacro{\action}{\mathcal{S}}
\newmacro{\act}{\mathcal{S}}
\DeclarePairedDelimiterXPP{\actof}[2]{\act_{#1}}{[}{]}{}{#2}	

\newmacro{\pth}{\curve}
\newmacro{\pthalt}{\varphi}
\newmacro{\pths}{\curves}
\newmacro{\dpth}{\xi}
\newmacro{\dpthalt}{\zeta}
\newmacro{\daction}{\mathcal{A}}
\newmacro{\quasipot}{V}
\newmacro{\dquasipot}{B}
\newmacro{\symdquasipot}{C}
\newmacro{\dquasipotalt}{\widetilde{\dquasipot}}
\newmacro{\invpot}{W}
\newmacro{\dinvpot}{\energy}
\newmacro{\logmgf}{H}
\newmacro{\contfuncs}{\mathcal{C}}
\newmacro{\map}{F}
\newmacro{\rate}{\rho}
\newmacro{\level}{s}

\newmacro{\eqcl}{\mathcal{K}}
\newmacro{\neqcl}{\nComps}
\newmacro{\primvar}{\alpha}
\newmacro{\bdprimvar}{\primvar_\infty}
\newmacro{\bdvar}{\noisepar_{\infty}^{2}}
\newmacro{\exponent}{s}
\newmacro{\bdpot}{\pot_\infty}
\newmacro{\potgbound}{C}

\newmacro{\ratenhd}{\mathcal{N}}

\newmacro{\diffcurr}{\delta \curr}

\newmacro{\comp}{\mathcal{K}}
\newmacro{\iComp}{i}
\newmacro{\jComp}{j}
\newmacro{\kComp}{k}
\newmacro{\nComps}{K}

\newmacro{\iGround}{0}
\newmacro{\ground}{\comp_{\iGround}}
\newmacro{\groundstates}{\sol[\indices]}
\newmacro{\asymptstables}{AS}
\newmacro{\nontrivialattract}{NTA}

\addauthor[Pan]{PM}{MediumBlue}

\newmacro{\meas}{\mu}	
\newmacro{\altmeas}{\nu}	
\newmacro{\occmeas}{\meas}	
\newmacro{\borel}{\mathcal{B}}	
\newmacro{\size}{\delta}	
\newmacro{\toler}{\eps}	

\newcommand{\Pro}{\mathcal{P}(\Xi)}

\newcommand{\Ex}{\mathds{E}}
\newcommand{\Wass}{\mathds{W}}

\newcommand{\E}{\mathcal E}

\newcommand{\dd}{\mathrm{d}}
\newcommand{\mm}{\mathsf{m}}
\DeclareMathOperator{\m}{\mathrm{m}}

\newcommand{\X}{\mathcal{X}}

\newcommand{\cH}{{\mathcal H}}

\newcommand{\cP}{{\mathcal P}}

\newcommand{\I}{\iota}

\begin{document}

 

\title[Second order splitting dynamics]{Second order splitting dynamics for stochastic monotone inclusions with closed loop distribution}
\author
[W.~Si]
{Wutao Si$^{\ast}$}
\email{wutao.si@univ-littoral.fr}
\author
[H.~Ennaji]
{Hamza Ennaji$^{c,\ast}$}
\address{$^{c}$\,%
Corresponding author.}
\address{$^{\ast}$\,%
Univ. Grenoble Alpes, CNRS, Grenoble INP*, LJK, 38000 Grenoble, France.}
\email[Corresponding author]{hamza.ennaji@univ-grenoble-alpes.fr}
\author
[J.~Fadili]
{Jalal Fadili$^{\sharp}$}
\address{$^{\sharp}$\,%
ENSICAEN, Normandie Université, CNRS, GREYC, France.}
\email{jalal.fadili@ensicaen.fr}




\subjclass[2020]{
Primary 34G25, 37N40, 46N10, 47H05, 49M30, 60J20}
\keywords{%
Structured monotone inclusions; Damped inertial dynamics; Hessian driven damping; inertial forward-backward dynamics; Yosida regularization.}


\makeatletter	
\newcommand{\thmtag}[1]{	
  \let\oldthetheorem\thetheorem	
  \renewcommand{\thetheorem}{#1}	
  \g@addto@macro\endtheorem{	
    \addtocounter{theorem}{0}	
    \global\let\thetheorem\oldthetheorem}	
  }
\makeatother

\makeatletter	
\newcommand{\asmtag}[1]{	
  \let\oldtheassumption\theassumption	
  \renewcommand{\theassumption}{#1}	
  \g@addto@macro\endassumption{	
    \addtocounter{assumption}{0}	
    \global\let\theassumption\oldtheassumption}	
  }
\makeatother

\begin{abstract}
    In this paper, we investigate the problem of finding a zero of the sum of a maximal monotone operator $A$ and a cocoercive operator $\Bm$ in a Hilbert space. This formulation naturally captures stochastic optimization problems with decision-dependent distributions, often referred to as performative prediction. We propose and analyze continuous-time second-order dynamics governed by a distributionally evaluated forward-backward splitting operator. We establish the existence and uniqueness of the equilibrium point under a general uniform monotonicity assumption. In this setting, employing a vanishing viscous damping coefficient, we prove the strong convergence of the trajectories to the equilibrium, accompanied by fast asymptotic convergence rates for the velocities. Furthermore, when the regularizing operator is strongly monotone, we consider a constant Polyak-type damping coefficient and we establish global exponential convergence rates for the dynamical system.
\end{abstract}

\allowdisplaybreaks	
\acresetall	
\acused{iid}
\acused{LHS}
\acused{RHS}

\maketitle
\setcounter{tocdepth}{1}
\section{Introduction}\label{sec:Intro}
\subsection{Context and motivation}

This paper is devoted to the study, in a real Hilbert space $\cH$, of structured monotone inclusions of the form
\begin{equation}\label{eq:Intro-MI}
	\textup{find}~\xbar\in\cH:~0\in A(\Bar{x}) + \Bxb(\Bar{x}),
\end{equation}
where $A:\cH\rightrightarrows\cH$ is a maximally monotone operator, $\Bx:\cH\to\cH$ is a cocoercive operator, and $(\mm_x)_{x\in\cH}$ is a family of probability measures indexed by $x$. We assume that the solution set $\zer(A+\Bxb)$ is nonempty. A prototypical example arises when $A = \partial g$ and $B_{\m_x}(x) = \mathbb{E}_{\xi\sim \m_x}[B(x, \xi)]$ with $B(x, \xi) = \nabla f(x, \xi)$. This setting appears particularly in optimization problems with decision-dependent distributions \cite{drusvyatskiy2023}, built upon the framework of performative prediction \cite{perdomo2020,mendler2020stochastic}. A brief overview of this topic is provided in the sequel.

In the classical (deterministic) setting, one of the standard methods to tackle inclusions of the form
\begin{equation}\label{eq:Intro-MI-dit}
	\textup{find}~\xbar\in\cH:~0\in A(\Bar{x}) + B(\Bar{x}),
\end{equation}
where $A:\cH\rightrightarrows\cH$ is maximally monotone, $B:\cH\to\cH$ is cocoercive, such that $\zer(A+B)\neq\emptyset$, is to reformulate \eqref{eq:Intro-MI-dit} as a single-valued equation
\begin{equation}\label{eq:Intro-MI-T}
	\textup{find}~\xbar\in\cH:~T_{\lambda}^{A,B}(\xbar) = 0.
\end{equation}
Here, the regularized forward-backward operator $T_{\lambda}^{A,B}$ is defined as
\begin{equation}\label{eq:FB-Op}
	T_{\lambda}^{A,B} := \frac{1}{\lambda} \left[ \Id - J_{\lambda A} \circ (\Id - \lambda B) \right],
\end{equation}
where $\lambda>0$, $\Id$ denotes the identity operator, and $ J_{\lambda A}:= (\Id +\lambda A)^{-1}$ is the resolvent of $A$. In this context, it is of interest to investigate the asymptotic behavior of trajectories generated by second-order dynamics of the form
\begin{equation}\label{eq:din}
	\ddot{x}(t) + \nu(t)\dot{x}(t) + T_{\lambda}^{A,B}(x(t)) + \omega \frac{\dd }{\dd t} T_{\lambda}^{A,B}(x(t)) = 0, \quad t\geq 0,
\end{equation}
where $\nu:[0,\infty)\to [0,\infty)$ is a friction parameter and $\omega\geq 0$.

\subsection{Stochastic optimization with decision dependent distributions}
Monotone inclusions of the form \eqref{eq:Intro-MI} naturally arise when dealing with optimization problems of the type
\begin{equation}\label{eq:obj}
	\min _{x \in \cH} \mathbb{E}_{\xi \sim \m_x}[f(x, \xi)] + g(x),
\end{equation}
where $\m = (\m_x)_{x\in\cH}$ is a family of probability measures dependent on the decision variable $x$. Here, $\mathbb{E}_{\xi \sim \m_x}$ denotes the expectation with respect to $\m_x$, $f(x,\xi)$ represents the loss associated with decision $x$ and sample $\xi$, and $g$ is a proper lower semicontinuous convex function acting as a regularizer. In this setting, the goal is to learn a decision rule from a data distribution that shifts in response to the decision itself.

Problems of the form~\eqref{eq:obj} are central to \emph{performative prediction}, a framework introduced in~\cite{perdomo2020,mendler2020stochastic} and further explored in~\cite{drusvyatskiy2023,EFA,wood2023stochastic,Cutler&al,Narang&al}. The primary challenge in analyzing~\eqref{eq:obj} lies in the dependence of the measure on the decision variable, which complicates the application of standard stochastic optimization techniques. A fundamental question is therefore: what is the appropriate notion of solution for~\eqref{eq:obj}?

As established in \cite{perdomo2020,mendler2020stochastic,drusvyatskiy2023}, a suitable concept is that of \emph{equilibria}. More precisely, a point $\Bar{x}$ is called an equilibrium with respect to the distribution map $\m_{(\cdot)}$ if it satisfies
\begin{equation}\label{eq:equi_point}
	\Bar{x} \in \argmin_{x\in\cH} \mathbb{E}_{\xi\sim \m_{\Bar{x}}}[f(x, \xi)] + g(x).
\end{equation}
In other words, $\Bar{x}$ minimizes the objective for the fixed distribution $\m_{\Bar{x}}$ induced by $\Bar{x}$ itself; the performance of the decision is evaluated against the distribution it induces. Equilibria can be viewed as fixed points of the repeated minimization procedure
\begin{equation}\label{eq:rrm_intro}
	x_{t+1} \in \argmin_{x \in \cH} \Ex_{\xi\sim\mm_{x_t}}[f(x,\xi)] + g(x),
\end{equation}
a perspective that makes them tractable from an algorithmic standpoint. Consequently, rather than attempting to minimize the objective in~\eqref{eq:obj} directly, we focus on finding an equilibrium point in the sense of~\eqref{eq:equi_point}.

\subsection{Presentation of the problem} 

Throughout this paper, let $\mathcal{H}$ be a real Hilbert space endowed with the inner product $\langle \cdot, \cdot \rangle$ and the associated norm $\|\cdot\|$. Let $\Xi$ be a Polish space, \ie a separable and completely metrizable topological space, which serves as the domain of the random variable $\xi$. Our aim is to propose and analyze dynamical systems whose trajectories converge, under suitable assumptions, to a solution of the monotone inclusion:
\begin{equation}\tag{MI}\label{eq:mono_inclu}
    \textup{find}~\xbar\in\cH : 0\in A(\Bar{x}) +  \mathbb{E}_{\xi \sim \mm_{\xbar}}[B(\xbar, \xi)].
\end{equation}
To simplify the notation, given the family of measures $\mm = (\mm_x)_{x\in\cH}$, we define the single-valued operator $\Bm: \cH \to \cH$ by
\begin{equation}\label{eq:operator-Bm}
    \Bm(x) := \mathbb{E}_{\xi \sim \mm}[B(x, \xi)].
\end{equation}

To construct an appropriate dynamic for solving \eqref{eq:mono_inclu}, we adopt a variant of the forward-backward operator \eqref{eq:FB-Op} introduced in \cite{boct2024second}.

\begin{definition}[Forward-backward operator]\label{def:optT}
For any $\lambda, \gamma > 0$ and a family of measures $\mm$, we define the operator $\T_{\lambda, \gamma}^{\m}: \mathcal{H} \to \mathcal{H}$ by
\[
\T_{\lambda, \gamma}^{\m}(x) := \frac{1}{\lambda} \left[ x - J_{\gamma A} (x - \gamma \Bm(x)) \right], \quad \forall x\in\cH.
\]
\end{definition}
We observe that for any $\lambda, \gamma > 0$, the set of zeros of the operator $A + \Bm$ coincides with the set of zeros of the operator $\T_{\lambda, \gamma}^{\m}$ (\cf \cref{lem:zeros}).\\

We now consider, for $t\geq t_0>0$, two positive functions $\lambda(\cdot), \gamma(\cdot): [t_0, \infty)\to (0, \infty)$. We propose to investigate the asymptotic behavior of the trajectories generated by a second-order differential equation analogous to \eqref{eq:din}, governed by the time-dependent operator $\T_{\lambda(t), \gamma(t)}^{\m}$. More precisely, we consider the system:
\begin{equation}\tag{split-DIN}\label{eq:split_din_avd}
\ddot{x}(t)+\nu(t)\dot{x}(t)+ \Txbtt(x(t)) +\omega \frac{\dd}{\dd t}\left(\Txbtt(x(t))\right)=0.
\end{equation}
As highlighted in \cite{boct2024second}, in the acronym (split-DIN), ``split'' refers to the splitting properties induced by the forward--backward operator, whereas ``DIN'' designates the Dynamical Inertial Newton system introduced in \cite{Attouch&Laszlo2}. A mechanical interpretation underlies \eqref{eq:split_din_avd}: the dynamics may be viewed as the motion of a material point subject to a viscous damping term, ${\nu}(t)\dot{x}(t)$, combined with a correction term, $\omega \tfrac{\dd}{\dd t}\left(\Txbtt(x(t))\right)$, which plays a role analogous to Hessian-driven damping.
\begin{example}
    Consider the case where $A = N_{K} = \partial\delta_{K}$, with $K$ being a closed convex set of admissible decisions, and $B(x,\xi) = \nabla_x f(x, \xi)$ for some $f \in C^1(\cH \times \Xi)$. Then, \eqref{eq:mono_inclu} reduces to the inclusion
    \begin{equation}
    \textup{find}~\xbar\in\cH : 0\in N_{K}(\Bar{x}) +  \mathbb{E}_{\xi \sim \mm_{\xbar}}[\nabla_x f(\xbar, \xi)],
    \end{equation}
    which was studied in \cite{Cutler&al}. In this setting, the resolvent $\Resolvent{\lambda}{A}$ is simply the projection operator $\proj_{K}$ (see, \eg \cite[Examples 23.3 and 23.4]{Bauschke&Combettes}), and the operator $\T_{\lambda, \gamma}^{\m}$ reduces to
    \begin{equation}
    \label{eq:eg1_optT}
    \T_{\lambda,\gamma}^{\mm}(x)
    = \frac{1}{\lambda}\left[ x - \operatorname{proj}_{K}\left( x - \gamma\,\mathbb{E}_{\xi\sim\mm_x}[\nabla_x f(x,\xi)]\right)\right].
    \end{equation}
   This formalism can incorporate the setting of decision-dependent 
multi-player games addressed in \cite{Narang&al}. Indeed, let 
$n \geq 2$, $\cH = \prod_{i=1}^{n}\cH_i$ be a product of Hilbert 
spaces, and for each player $i \in \{1,\ldots,n\}$ let $K_i \subset \cH_i$ be a closed convex strategy set and $f_i \in C^1(\cH \times \Xi_i)$ a loss function. Each player seeks to solve
\[
    \min_{x_i \in K_i}~\mathbb{E}_{\xi_i \sim \mm^i_{x}}
    [f_i(x_i, x_{-i}, \xi_i)],
\]
where $x_{-i} = (x_1,\dotsc,x_{i-1},x_{i+1},\dotsc,x_n)$ and $\mm^i_x \in \cP(\Xi_i)$ depends on the full strategy $x\in K = \prod_{i=1}^{n} K_i$. Setting
\[
    B(x,\xi) = \bigl(\nabla_{x_1} f_1(x,\xi_1), \ldots, 
    \nabla_{x_n} f_n(x,\xi_n)\bigr), \quad 
    \xi = (\xi_1,\ldots,\xi_n),
\]
the inclusion \eqref{eq:mono_inclu} becomes the \emph{performative equilibrium} condition
\[
    \textup{find}~\xbar \in K :
    \quad
    0 \in N_{K_i}(\xbar_i) 
    + \mathbb{E}_{\xi_i \sim \mm^i_{\xbar}}
    [\nabla_{x_i} f_i(\xbar,\xi_i)],
    \quad \forall\, i = 1,\ldots,n,
\]
studied in \cite{Narang&al}. The operator $\T_{\lambda,\gamma}^{\mm}$ decomposes coordinate-wise as
\[
    \bigl(\T_{\lambda,\gamma}^{\mm}(x)\bigr)_i
    = \frac{1}{\lambda}\Bigl[x_i -
    \operatorname{proj}_{K_i}\!\Bigl(x_i
    - \gamma\,\mathbb{E}_{\xi_i \sim \mm^i_{x}}
    [\nabla_{x_i} f_i(x,\xi_i)]\Bigr)\Bigr], \quad \forall\, i = 1,\ldots,n.
\]
\end{example}
\begin{example}
Consider another common setting in machine learning where one seeks to learn a sparse predictive model. Again taking $B(x, \xi) = \nabla_x f(x, \xi)$ be the gradient of a smooth loss function $f$ evaluated on data points $\xi$, and $A = \partial g$ where $g(x) = \alpha \Vert x\Vert_1$ to promote sparsity. The inclusion \eqref{eq:mono_inclu} reads:
    \begin{equation}
        0 \in \alpha \partial \Vert \xbar\Vert_1 + \mathbb{E}_{\xi \sim \mm_{\bar{x}}}[\nabla_x f(\bar{x}, \xi)].
    \end{equation}
    In this setting, the resolvent $J_{\gamma A}$ corresponds to the celebrated soft-thresholding operator $\operatorname{prox}_{\gamma \alpha \Vert\cdot\Vert_1}$. Consequently, the operator $\T_{\lambda,\gamma}^{\mm}(x)$ takes the explicit form:
    \begin{equation}
        \T_{\lambda,\gamma}^{\mm}(x) = \frac{1}{\lambda}\left[ x - \operatorname{prox}_{\gamma \alpha \Vert\cdot\Vert_1} \left( x - \gamma\,\mathbb{E}_{\xi\sim\mm_x}[\nabla_x f(x,\xi)]\right)\right].
    \end{equation}
\end{example}
\subsection{Contributions and related works}

\paragraph{\textbf{Related works.}} The problem addressed in this paper lies at the intersection of optimization with decision-dependent distributions and continuous-time approaches to monotone inclusions via operator splitting.

\textit{Decision-dependent distributions.} The framework of decision-dependent distributions, also known as performative prediction, has gained significant attention following the seminal works of \cite{perdomo2020,mendler2020stochastic}. While early works focused mainly on the convergence of repeated risk minimization and stochastic gradient methods in convex settings \cite{drusvyatskiy2023,mendler2020stochastic}, recent extensions have broadened the scope to stochastic saddle point problems \cite{wood2023stochastic} and established asymptotic normality results for decision-dependent approximations \cite{Cutler&al}. However, the vast majority of these contributions rely on discrete-time algorithms.
The study of continuous-time dynamics in this setting was initiated in \cite{EFA}. There, the authors analyzed first and second-order dynamical systems whose trajectories converge to the equilibria of \eqref{eq:mono_inclu}. While their analysis of first-order systems covered general monotone operators, their second-order analysis incorporating inertial and Hessian-driven damping effects, was restricted to the smooth case (i.e., $A=0$ or $A$ is a smooth gradient). Specifically, they addressed equations governed by gradients of the form $\nabla G_{\mm_{x}}(x)$.

\textit{Inertial dynamics and operator splitting.} In the classical deterministic optimization setting, second-order dynamical systems have been extensively studied to accelerate convergence and dampen oscillations. The Heavy Ball method \cite{Polyak1964} and Nesterov's acceleration \cite{Nesterov1983} have inspired numerous continuous-time models \cite{Su&al, attouch2000heavy}. To attenuate the oscillations often observed in inertial systems, the Hessian-driven damping mechanism was introduced in \cite{alvarez2002second, attouch2016fast}.
However, standard Hessian damping requires the underlying potential to be twice differentiable. To handle non-smooth monotone inclusions of the form $0 \in A+B$, Attouch and L{\'a}szl{\'o} introduced in \cite{Attouch&Laszlo2} the \textit{Dynamical Inertial Newton} (DIN) system. Drawing inspiration from \cite{Attouch&Peypouquet}, they proposed to substitute the maximal monotone operator in the damping term with its Yosida regularization, thereby tackling the lack of differentiability. This methodology was later extended to a splitting framework in \cite{boct2024second,adly} via forward-backward operators. More recently, dynamics structurally similar to \eqref{eq:split_din_avd}, governed by general Lipschitz monotone operators, were investigated in \cite{BCF} in connection with Fast Krasnoselskii-Mann method, and in \cite{boct2025fast} in the context of Optimistic Gradient Descent Ascent. Furthermore, regarding asymptotic rates, \cite{boct2018convergence} established convergence results for implicit forward-backward dynamics associated with strongly monotone inclusions. Parallel to these continuous-time advances, significant progress has been made in the discrete setting with the development of accelerated inertial forward–backward algorithms (see \eg \cite{mainge2024accelerated,lorenz2015inertial,boct2024generalized,Attouch&Cabot} and the references therein). 

Our work builds upon this "split-DIN" methodology and extends these lines of research to the challenging setting of decision-dependent distributions, where the operator depends on the current state via the probability measure.\\

\paragraph{\textbf{Contributions.}}  
Our main contributions can be summarized as follows. We propose a second-order dynamic \eqref{eq:split_din_avd} that handles decision-dependent distributions while explicitly managing non-smooth constraints via the resolvent of the operator $A$. This framework generalizes the smooth setting of \cite{EFA} and extends the classical one of \cite{boct2024generalized}. 
We establish the existence and uniqueness of the equilibrium point under a general uniform monotonicity condition. Notably, our proof refines the analysis of \cite[Theorem 3.5]{EFA}. Furthermore, we prove the well-posedness of the proposed dynamic and demonstrate in \cref{thm:convergence} that, under a vanishing viscous damping profile, the trajectories converge strongly to the unique equilibrium. This result is accompanied by fast asymptotic decay rates for the velocities.
Finally, under the assumption of strong monotonicity, we consider a constant Polyak-type damping. We establish in \cref{thm:convergence_sm} that this choice yields global exponential convergence rates for both the system's trajectories and the probability measure.

\paragraph{\textbf{Organization of the paper.}} 
The paper is organized as follows. \cref{section:prelim} collects preliminary results and standard definitions. In \cref{section:existence}, we prove the existence and uniqueness of the equilibrium point under uniform and strong monotonicity and we establish well-posedness of the dynamics. \cref{section:cv-um} is dedicated to the asymptotic analysis of \eqref{eq:split_din_avd} in the general setting. More precisely, we prove the strong convergence of the trajectories to the equilibrium point and derive pointwise and integral decay rates. We strengthen these results in \cref{section:cv-sm} by performing the analysis a  strongly monotone setting in which we establish exponential convergence.     

To guide the reader, \cref{tab:summary_dynamics} provides a summary of the assumptions, the parameters, and the asymptotic convergence rates established across these two main settings. Detailed definitions of the involved quantities are provided in the subsequent sections.
\begin{table}[htpb]
    \centering
    \caption{Summary of assumptions, structural parameters, and convergence rates for the decision-dependent forward-backward dynamics across the two main settings studied in this work.}
    \label{tab:summary_dynamics}
    \renewcommand{\arraystretch}{1.6}     \resizebox{\textwidth}{!}{    \begin{tabular}{@{}lcc@{}}
        \toprule
        \textbf{Property / Parameter} & \textbf{Uniform Monotonicity} (\cref{section:cv-um}) & \textbf{Strong Monotonicity} (\cref{section:cv-sm}) \\
        \midrule
        \textbf{Regularizer $A$} & Uniformly monotone with modulus $\phi$ & Strongly monotone with modulus $\mu_A$ \\
        \textbf{Equilibrium existence} & $\phi(t) > \beta\tau t^2, \quad \forall t>0$ & $ \frac{\beta\tau}{\mu_A}<1$ \\
        \textbf{Modulus of $\Txb$} & $\Phi(s) = \frac{s}{\lambda(t)} \left( s - \psi^{-1}(s) \right)$ & $\tilde{\mu} = \frac{\gamma}{\lambda} \left( \frac{\mu_A}{1+\gamma\mu_A} \right)$ \\
        \midrule
        \textbf{Time-scaling $\lambda(t)$} &  $\lambda t^3$ with $ \lambda > \frac{4(1+\gamma\beta\tau)^2}{\alpha}$ & Constant: $\lambda > 0$ \\
        \textbf{Damping profile $\nu(t)$} & Vanishing viscous: $\frac{\alpha}{t}$ $\left(\text{with } \alpha \geq 3\right)$ & Constant à la Polyak: $2\sqrt{\tilde{\mu}}$ \\
        \textbf{Dynamical stability} & -- & $\rho = \frac{\beta\tau}{\tilde{\mu}} < \frac{\sqrt{2}\lambda}{8\gamma}$ \\
        \midrule
        \textbf{Trajectory convergence} & Strong  & Strong Exponential \\
        \textbf{Asymptotic rate on $x(t)$} & $\Psi_A(\|x(t) - \bar{x}\|) = o(1)$ & $\|x(t) - \bar{x}\| = \mathcal{O}\left( e^{-\frac{\sqrt{\tilde{\mu}}}{16} t} \right)$ \\
        \textbf{Asymptotic rate on $\mm_x(t)$} & $\Wass_1(\mm_{x(t)}, \mm_{\bar{x}}) = o(1)$ & $\Wass_1(\mm_{x(t)}, \mm_{\bar{x}}) = \mathcal{O}\left( e^{-\frac{\sqrt{\tilde{\mu}}}{16} t} \right)$ \\
        \bottomrule
    \end{tabular}%
    }
\end{table}

\section{Notation and preliminary results}\label{section:prelim}

This section establishes the mathematical framework and key definitions that will be used throughout the paper. We introduce fundamental concepts from convex analysis and operator theory that serve as building blocks for our theoretical development.

\subsection{Convex analysis}

Consider a function $g:\cH\to\R\cup\{+\infty\}$. Its domain is given by $\dom(g) = \{x\in\cH:~g(x)<\infty\}$. The class $\Gamma_{0}(\cH)$ consists of proper convex functions (bounded below with nonempty domain) that are lower semicontinuous (l.s.c) and take values in $\R\cup\{+\infty\}$. For $\alpha>0$, $g$ is called $\alpha$-strongly convex if $g - \frac{\alpha}{2}\Vert .\Vert^{2}$ is convex.

The subdifferential of $g$ is the set-valued map
\[
\partial g:x\in\cH\mapsto\{ v\in\cH:~g(y)\geq g(x) + \langle v,y-x\rangle \}.
\]
Fermat's optimality condition characterizes minimizers of $g \in \Gamma_0(\cH)$ through the condition:
\[
0\in\partial g(x^{\star}) \Leftrightarrow x^\star \in \argmin g(\cH).
\]

\begin{definition}[Differentiability]
	Let $g:\cH\to\R\cup\{\infty\}$ and $x\in\textup{int}(\dom(g))$. The function $g$ is (Fréchet) differentiable at $x$ if there exists $v\in\cH$ such that
	\[
	\lim_{h\to 0} \frac{g(x+h) - g(x) - \langle v,h\rangle}{\Vert h\Vert} = 0.
	\]
	The unique vector $v$ is the gradient, denoted $\nabla g(x)$.
\end{definition}

When $g \in \Gamma_{0}(\cH)$ is differentiable at $x$, the subdifferential reduces to $\partial g(x)=\{\nabla g(x)\}$.

\begin{definition}[$L$-smoothness]
	Let $L\geq 0$ and $g:\cH\to\R\cup\{\infty\}$. The function $g$ is $L$-smooth over $D\subset\cH$ if it is differentiable on $D$ and its gradient is Lipschitz continuous with constant $L$:
	\[
	\Vert \nabla g(x) - \nabla g(y)\Vert \leq L \Vert x - y\Vert~\text{for all}~x,y\in D.
	\]
	The class $C^{1,1}_{L}(D)$ denotes $L$-smooth functions on $D$.
\end{definition}

The proximal mapping of a function $g:\cH\to\R\cup\{+\infty\}$ is defined as
\[
\prox_{g}(x) = \argmin_{y \in \cH}\left\{g(y) + \frac{1}{2}\Vert y - x\Vert^{2}\right\}~\text{for all}~x\in\cH.
\]
When $g = \I_{K}$ (the indicator function of a closed convex set $K\subset\cH$), the proximal mapping coincides with the projection operator: $\prox_{g} = \proj_{K}$.

\subsection{Operator theory}

For a set-valued operator $A:\cH\rightrightarrows\cH$, we define: the domain $\dom(A) = \{x\in\cH:~A(x)\neq \emptyset\}$, the graph $\gra (A) = \{ [x,u]\in\cH\times\cH:~u\in Ax\}$, and the zeros set $\zer (A)  = \{x\in\cH:~0\in A(x)\}:= A^{-1}(0)$.

\begin{definition}\label{def:op_properties}
	\begin{itemize}
		\item $A$ is $\beta$-Lipschitz continuous if it is single-valued and
		\begin{equation}\label{eq:Lip_op}
			\Vert Ax - Ay \Vert \leq \beta \Vert x-y\Vert , \quad \forall x,y\in \dom A.
		\end{equation}
		
		\item $A:\cH\rightrightarrows\cH$ is monotone if
		\begin{equation}\label{eq:monotone_op}
			\langle x-y,u-v\rangle \geq 0 , \quad \forall [x,u],[y,v]\in\gra A.
		\end{equation}
		
		\item $A$ is maximal monotone if there is no monotone operator $B$ (satisfying \eqref{eq:monotone_op}) with $\gra A\subsetneq \gra B$.
		\item We say that $A$ is uniformly monotone with modulus $\phi:[0,\infty)\to [0,\infty)$ if $\phi$ is increasing, $\phi(0) = 0$, $\lim_{t\to\infty}\phi(t) = \infty$, and
		\begin{equation}\label{eq:uniformly_monotone_op}
			\langle x-y,u-v\rangle\geq  \phi\left(\Vert x-y\Vert\right) , \quad \forall [x,u],[y,v]\in\gra A.
		\end{equation}
		\item  $A$ is $\mu$-strongly monotone (with $\mu >0$) if
		\begin{equation}\label{eq:str_monotone_op}
			\langle x-y,u-v\rangle\geq \mu \Vert x-y\Vert^{2} , \quad \forall [x,u],[y,v]\in\gra A.
		\end{equation}
		
		\item A single-valued operator $B:\cH\to\cH$ is $\theta$-cocoercive (for $\theta>0$) if
		\begin{equation}\label{eq:cocoercive_op}
			\langle B(x)-B(y),x-y\rangle\geq \theta \Vert B(x)-B(y)\Vert^{2} , \quad \forall x,y\in\cH.
		\end{equation}
		
		\item For $T:D\to\cH$ with $D\subseteq\cH$, $T$ is firmly nonexpansive if for all $x,y\in D$
		\begin{equation}\label{eq:firmly-nonexpansive}
			\Vert T(x) - T(y)\Vert^2 + \Vert(\id -T)(x) - (\id - T)(y)\Vert^2 \leq \Vert x-y\Vert^2.
		\end{equation}
		A nonexpansive operator is 1-Lipschitz continuous. 
	\end{itemize}
\end{definition}

\begin{remark}
	\begin{itemize}
		\item Strong monotonicity of $A$ with constant $\mu$ is equivalent to monotonicity of $A-\mu\Id$.
		
		\item An operator $B$ is $\theta$-cocoercive if and only if its inverse $B^{-1}$ is $\theta$-strongly monotone (see \eg \cite[Example 20.31]{Bauschke&Combettes}). Additionally, when $A:\cH\rightrightarrows\cH$ is maximally monotone and $B:\cH\to\cH$ is cocoercive, the sum $A+B$ remains maximally monotone (see \eg \cite[Example 20.31]{Bauschke&Combettes}).		
		\item A direct consequence of the Cauchy-Schwarz inequality shows that a $\theta$-cocoercive operator $B$ is $\frac{1}{\theta}$-Lipschitz continuous. Indeed, for any $x,y\in\cH$:
		\[
			\Vert B(x)-B(y)\Vert\Vert x-y\Vert \geq\langle B(x)-B(y),x-y\rangle\geq \theta \Vert B(x)-B(y)\Vert^{2}.
		\]
	\end{itemize}
\end{remark}

\begin{example}
	 The subdifferential $\partial g$ of any function $g\in\Gamma_{0}(\cH)$ exemplifies a maximal monotone operator, commonly referred to as a subpotential maximal monotone operator.
\end{example}

\begin{example}
	Given a maximally monotone operator $A:\cH\rightrightarrows\cH$, its resolvent $J_A$ is single-valued and firmly nonexpansive (see \eg \cite[Propositions 23.8 and 23.10]{Bauschke&Combettes}). Conversely, an operator $T$ is firmly nonexpansive if and only if it is the resolvent of some maximally monotone operator (see \eg \cite[Corollary 23.9]{Bauschke&Combettes}).
\end{example}
For a comprehensive treatment of operator theory and convex analysis, the reader is referred to the classical monographs \cite{brezismaxmon,Bauschke&Combettes}.

\subsection{Monotone inclusions}

Consider a maximal monotone operator $A$ on $\cH$, a single-valued mapping $D:[t_0,+\infty[\times\overline{\dom(A)} \rightarrow \cH$, and the differential inclusion
\begin{equation}\label{eq:solution_MI}
\left\{\begin{array}{l}
\dot{x}(t)+A(x(t))+D(t,x(t)) \ni 0, \quad t \in [t_0, T] \\
x(t_0)=x_0\in\dom(A).
\end{array}\right.
\end{equation}

\begin{definition}\label{def:strong-solution}
A trajectory $x:[t_0,T]\to \cH$ is a strong solution of \eqref{eq:solution_MI} on $[t_0,T]$ if:
\begin{enumerate}[label=(\alph*)]
\item $x$ is continuous on $[t_0,T]$ and absolutely continuous on any compact subset of $]t_0,T[$ (hence almost everywhere differentiable);
\item $x(t)\in\dom(A)$ for almost every $t\in]t_0,T]$, and \eqref{eq:solution_MI} holds for almost every $t\in]t_0,T[$.
\end{enumerate}

A global strong solution on $[t_0,+\infty[$ is a strong solution trajectory on each compact interval $[t_0,T]$.
\end{definition}
For further details on monotone inclusions, we refer the reader to the classical monographs \cite{brezismaxmon} or \cite{Barbu}.

\subsection{Wasserstein distance}
Let $\Pro$ denote the space of probability measures on $\Xi$. For any $\mm_1, \mm_2 \in \Pro$, the $1$-Wasserstein distance is defined via the Kantorovich-Rubinstein duality as
$$
\mathbb{W}_1(\mm_1, \mm_2) = \sup _{h \in \operatorname{Lip}_1} \left| \mathbb{E}_{\xi \sim \mm_1} [h(\xi)] - \mathbb{E}_{\eta \sim \mm_2} [h(\eta)] \right|,
$$
where $\operatorname{Lip}_1$ denotes the set of $1$-Lipschitz continuous functions $h: \Xi \to \mathbb{R}$.


\subsection{Standing Assumptions}

We begin by specifying the structural properties of the operators involved in our equilibrium problem.

\begin{assumption}\label{assu:1}
The following conditions hold on the operators:
    \begin{itemize}
        \item $A: \mathcal{H} \rightrightarrows \mathcal{H}$ is a maximally monotone set-valued operator;
        \item $B: \mathcal{H} \times \Xi \rightarrow \mathcal{H}$ is single-valued, with $\xi \mapsto B(x, \xi)$ being $\m_x$-measurable for each $x \in \mathcal{H}$. There exists $\beta > 0$ such that for every $x \in \mathcal{H}$, the partial map $\xi \mapsto B(x, \xi)$ is uniformly $\beta$-Lipschitz.
    \end{itemize}
\end{assumption}

To control the behavior of the expectation operator $B_{\m_x}(x) = \mathbb{E}_{\xi \sim \m_x}[B(x, \xi)]$, we impose a regularity condition on $B$.

\begin{assumption}\label{assu:2}
There exists $\theta > 0$ such that for $\m_x$-almost every $\xi \in \Xi$, the map $x \mapsto B(x, \xi)$ is $\theta$-cocoercive. Specifically, for all $y, z \in \mathcal{H}$,
\[
    \theta \|B(y, \xi) - B(z, \xi)\|^2 \leq \langle B(y, \xi) - B(z, \xi), y - z \rangle.
\]
\end{assumption}

The pointwise cocoercivity in \cref{assu:2} provides sufficient structure to ensure that the aggregated operator inherits this property.

\begin{lemma}\label{lem:Bm-cocoercive}
Under \cref{assu:2}, the expectation operator $B_{\m_x}(x) = \mathbb{E}_{\xi \sim \m_x}[B(x, \xi)]$ is $\theta$-cocoercive.
\end{lemma}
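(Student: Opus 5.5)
The plan is to read the lemma as a statement about the operator $\Bm$ of \eqref{eq:operator-Bm} for a \emph{frozen} measure $\mm$. That is, for each fixed $\mm$ in the family $(\mm_x)_{x\in\cH}$, the map $y\mapsto \mathbb{E}_{\xi\sim\mm}[B(y,\xi)]$ is $\theta$-cocoercive. This is the form in which the operator enters $\T^{\mm}_{\lambda,\gamma}$ and the dynamics \eqref{eq:split_din_avd}, where the measure index $\bar x$ or $x(t)$ is held apart from the argument. No cocoercivity in $x$ can be expected when the measure itself moves with $x$, so I will state this reading explicitly at the start of the proof.

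The first step is to make sure the expectations are well defined and finite. By \cref{assu:1}, $\xi\mapsto B(y,\xi)$ is measurable and $\beta$-Lipschitz. Hence $\|B(y,\xi)\|\le \|B(y,\xi_0)\|+\beta\, d(\xi,\xi_0)$ for a fixed reference point $\xi_0\in\Xi$. So $B(y,\cdot)$ is Bochner integrable as soon as $\mm$ has a finite first moment, which is the natural setting in which $\Wass_1$ is finite. I expect this integrability bookkeeping to be the only real obstacle. If the paper intends $\Pro$ to consist of measures with finite first moment, I will cite that; otherwise I will add it as a harmless standing convention.

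With integrability in hand, fix $y,z\in\cH$ and set $D(\xi):=B(y,\xi)-B(z,\xi)$. By linearity of the Bochner integral, $\Bm(y)-\Bm(z)=\mathbb{E}_{\xi\sim\mm}[D(\xi)]$. Because the inner product with the fixed vector $y-z$ is a continuous linear functional, it commutes with the integral:
\[
\langle \Bm(y)-\Bm(z),\,y-z\rangle=\mathbb{E}_{\xi\sim\mm}\big[\langle D(\xi),y-z\rangle\big].
\]
Applying \cref{assu:2} pointwise for $\mm$-a.e.\ $\xi$ gives
\[
\mathbb{E}_{\xi\sim\mm}\big[\langle D(\xi),y-z\rangle\big]\ge \theta\,\mathbb{E}_{\xi\sim\mm}\big[\|D(\xi)\|^2\big].
\]
The pointwise inequality also shows that $\|D\|^2$ is integrable, being dominated by $\theta^{-1}\|D\|\,\|y-z\|$.

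The last step is Jensen's inequality for the convex function $\|\cdot\|^2$. Equivalently, one can expand $\mathbb{E}\|D-\mathbb{E}D\|^2\ge 0$. Either way this yields $\|\mathbb{E}[D]\|^2\le \mathbb{E}\|D\|^2$. Chaining the inequalities gives $\langle \Bm(y)-\Bm(z),y-z\rangle\ge\theta\|\Bm(y)-\Bm(z)\|^2$, which is the claim. As a by-product, the remark following \cref{def:op_properties} then shows that $\Bm$ is $\theta^{-1}$-Lipschitz.
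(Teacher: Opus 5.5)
Your proof is correct and follows the paper's argument: pass the inner product inside the expectation, apply the pointwise cocoercivity of \cref{assu:2}, and conclude with Jensen's inequality $\|\mathbb{E}[D]\|^2\le\mathbb{E}\|D\|^2$. Your explicit frozen-measure reading and your integrability bookkeeping (finite first moment, domination of $\|D\|^2$) make precise what the paper leaves implicit, but they do not change the route.
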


\begin{proof}
For arbitrary $y, z \in \mathcal{H}$, we have:
\begin{align*}
\langle B_{\m_x}(y) - B_{\m_x}(z), y - z \rangle 
&= \mathbb{E}_{\xi \sim \m_x}[\langle B(y, \xi) - B(z, \xi), y - z \rangle] \\
&\overset{(i)}{\geq} \theta \mathbb{E}_{\xi \sim \m_x}[\|B(y, \xi) - B(z, \xi)\|^2] \\
&\overset{(ii)}{\geq} \theta \left\| \mathbb{E}_{\xi \sim \m_x}[B(y, \xi) - B(z, \xi)] \right\|^2 \\
&= \theta \| B_{\m_x}(y) - B_{\m_x}(z) \|^2,
\end{align*}
where $(i)$ uses \cref{assu:2}, and $(ii)$ follows from Jensen's inequality.

\end{proof}

\begin{remark}
By virtue of \cref{lem:Bm-cocoercive}, the operator $B_{\m_x}$ is Lipschitz continuous with constant $L \leq \frac{1}{\theta}$.
\end{remark}
\subsection{Reformulation via the forward-backward operator $\Tm$ }
As emphasized earlier, our main objective is to prove that the trajectories of \eqref{eq:split_din_avd} converge to the equilibria of the inclusion \eqref{eq:mono_inclu}. Our analysis relies on the fundamental observation that for any \(\lambda, \gamma > 0\), the set of zeros of the operator \(A + \Bm\) coincides exactly with the set of zeros of the operator \(\Tm\). This property is formalized in the following elementary, yet crucial, lemma.

\begin{lemma}\label{lem:zeros}
	Let $\mm \in\Pro$ and $\lambda,\gamma >0$, the zero sets coincide:
	\[
	\zer(A + \Bm) = \zer(\Tm).
	\]
\end{lemma}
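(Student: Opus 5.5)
The plan is to prove both inclusions directly from the definition of the resolvent, via a chain of equivalences. Here $\mm$ is a fixed measure, so $\Bm$ is a fixed single-valued operator on $\cH$. Since $A$ is maximally monotone, its resolvent $J_{\gamma A}=(\Id+\gamma A)^{-1}$ is single-valued and defined on all of $\cH$ (as recalled in the example on resolvents). Hence $\Tm(x)=\frac{1}{\lambda}\left[x-J_{\gamma A}(x-\gamma\Bm(x))\right]$ is well defined for every $x\in\cH$.

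Fix $x\in\cH$. Because $\lambda>0$, the condition $\Tm(x)=0$ is equivalent to $x=J_{\gamma A}(x-\gamma\Bm(x))$. By the defining identity $p=J_{\gamma A}(z)\iff z\in p+\gamma A(p)$, applied with $p=x$ and $z=x-\gamma\Bm(x)$, this is in turn equivalent to
\[
x-\gamma\Bm(x)\in x+\gamma A(x),
\]
that is, $-\gamma\Bm(x)\in\gamma A(x)$. Dividing by $\gamma>0$ gives $0\in A(x)+\Bm(x)$.

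Every step above is an equivalence, so both inclusions $\zer(A+\Bm)\subseteq\zer(\Tm)$ and $\zer(\Tm)\subseteq\zer(A+\Bm)$ follow at once. In particular the zero set does not depend on the choice of $\lambda,\gamma>0$.

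No step is a real obstacle. The only point needing care is the characterization $p=J_{\gamma A}(z)\iff z-p\in\gamma A(p)$, which relies on single-valuedness of the resolvent and hence on the maximal monotonicity of $A$ from \cref{assu:1}. Neither cocoercivity nor any property of the measure is needed.
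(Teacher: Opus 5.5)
Your proof is correct and follows essentially the same route as the paper. Both reduce $\Tm(x)=0$ to the fixed-point identity $x=J_{\gamma A}(x-\gamma\Bm(x))$ and then apply the resolvent characterization $p=J_{\gamma A}(z)\iff z\in p+\gamma A(p)$. You write this as a single chain of equivalences, whereas the paper proves the two inclusions separately. A minor refinement: single-valuedness of $J_{\gamma A}$ needs only monotonicity, while maximality is what makes $J_{\gamma A}$, and hence $\Tm$, defined on all of $\cH$.
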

\begin{proof}
	Let $x\in\zer(A + \Bm)$. Then, for $\gamma>0$, we have $-\gamma \Bm(x)\in \gamma A(x)$. Adding, $x$ in both sides, we get $x-\gamma \Bm(x)\in \gamma A(x)+x$, that is $x = \Resolvent{\gamma}{A}(x-\gamma \Bm(x))$. For $\lambda >0$, this implies that $\frac{1}{\lambda}\left(x = \Resolvent{\gamma}{A}(x-\gamma \Bm(x))\right) = \Tm(x) = 0$. Conversely, if $x\in\zer(\Tm)$, \ie $\Tm(x) = 0$, then $x = \Resolvent{\gamma}{A}(x-\gamma \Bm(x))$. By property of the resolvent (see \eg \cite[Proposition 23.2]{Bauschke&Combettes}, we obtain $x-\gamma \Bm(x)\in \gamma A(x)+x$. We get, after dividing by $\gamma>0$ that $- \Bm(x)\in  A(x)$, \ie $x\in \zer(A+\Bm)$.
\end{proof}
\begin{remark}
		Contrary to $A+\Bm$, the operator $\Tm$ is single-valued essentially, since $\Resolvent{\gamma}{A}$ is single-valued (see \eg \cite[Corollary 23.11]{Bauschke&Combettes}). This is crucial in the study of the second order differential equations.
\end{remark}

\section{Well-posedness: Existence of Equilibria and Trajectories}\label{section:existence}
In this section, we perform the analysis of the second-order dynamics \eqref{eq:split_din_avd}. More precisely, we address the existence and uniqueness of equilibria as well the existence and uniqueness of global solutions. 

First, we start with following assumption is essential for our analysis, it characterizes the sensitivity of the distribution map with respect to shifts in the decision variable. 
\begin{assumption}[Lipschitz continuity of the distribution map]\label{assu:W1}
  There exists $\tau > 0$ such that for all $x, y \in \mathcal{H}$,
  \[
  \Wass_1(\m_x, \m_y) \leq \tau\Vert x-y\Vert. 
  \]
\end{assumption}

\cref{assu:W1} is a standard regularity condition in the literature of stochastic optimization with decision-dependent distributions (see \eg \cite{drusvyatskiy2023,perdomo2020}). As emphasized in \cite{EFA}, this assumption is intimately connected to the notion of coarse Ricci curvature on metric random walk spaces.

As observed earlier, the dynamics \eqref{eq:split_din_avd} are governed by the operator $\Txbtt$, where the measure $\m_{x(t)}$ depends on the state $x(t)$ itself. Consequently, this problem cannot be treated directly within the classical framework of evolution equations governed by time-dependent monotone operators (see, \eg \cite[Proposition 6.2.1]{Haraux}). Following the strategy proposed in \cite{EFA}, we overcome this difficulty by performing a suitable reformulation. We begin by introducing the notion of an equilibrium point.

\begin{definition}[Equilibrium point]\label{def:equilibrium}
We say that $\bar{x}\in\cH$ is an equilibrium for the family of probability measures $(\mm_x)_{x \in \cH}$ if it satisfies
\begin{equation}\label{eq:equilibrium}
	0\in A(\xbar)+\Bxb(\xbar),
\end{equation}
or equivalently, if $\opT{\bar x}{\lambda}{\gamma}(\xbar) = 0$ for all $\lambda,\gamma >0$.
\end{definition}

\begin{remark}
We note that state dependent inclusions of the form $0\in A(\xbar) + B_{\xbar}(\xbar)$ were investigated in \cite{Adly&Le} using the Douglas-Rachford splitting algorithm. While conceptually related, their structural assumptions differ from ours, making their framework unsuitable for the stochastic optimization with decision-dependent distributions. In fact, contrary to \cref{assu:1,assu:2}, the authors in \cite{Adly&Le} assume that $A$ is single-valued, strongly monotone and Lipschitz continuous. Moreover, the state-dependency is embedded in the multi-valued operator $B_x$, requiring its resolvent $\Resolvent{\gamma}{B_x}$ to be Lipschitz continuous with respect to the state $x$. In contrast, our framework naturally fits with stochastic optimization with decision-dependent distributions as we embed the non-smooth constraints in the state-independent maximally monotone operator $A$ while the state-dependency is handled by the single-valued cocoercive operator $\Bx$.
\end{remark}

First, we start by gathering several useful estimates. To do so, we assume that $\xbar$ exists, we postpone the proof of the existence of equilibria to \cref{subsection:equilibria}.

\subsection{Preparatory results}\label{subsection:prep_res}
We begin by establishing some fundamental properties of the gap function $\Exb$. We first observe that, by definition, $\Exb(\bar{x}) = 0$.

\begin{lemma}\label{lemm:1}
    Under \cref{assu:1,assu:W1}, for any $y, z \in \mathcal{H}$, the following properties hold:
    \begin{enumerate}[(i)]
        \item\label{it:lemm:1-1} $\sup_{x\in\cH}\|\By(x) - \Bz(x)\| \leq \beta \tau \|y-z\|$;
        \item\label{it:lemm:1-2} $\|\Exb(x)\| \leq \frac{\gamma}{\lambda}\beta \tau \|x- \bar{x}\|$;
        \item\label{it:lemm:1-3} $\langle \Exb(x), x-\bar{x}\rangle \geq \frac{\lambda}{2}\|\Exb(x)\|^2 - \frac{(1+\gamma \beta \tau)^2}{2\lambda }\|x- \bar{x}\|^2.$
    \end{enumerate}
\end{lemma}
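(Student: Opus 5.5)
Throughout, I read the gap function as $\Exb(x) := \opT{x}{\lambda}{\gamma}(x) - \Txb(x)$. This is consistent with the observation $\Exb(\xbar)=0$. The plan is to prove the three items in order, each feeding the next: \ref{it:lemm:1-1} is a Kantorovich--Rubinstein estimate, \ref{it:lemm:1-2} follows from \ref{it:lemm:1-1} via nonexpansiveness of the resolvent, and \ref{it:lemm:1-3} follows from \ref{it:lemm:1-2} by a polarization identity.

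\textbf{Item \ref{it:lemm:1-1}.} Fix $x\in\cH$ and a unit vector $e\in\cH$, and set $h(\xi):=\beta^{-1}\langle B(x,\xi),e\rangle$. By \cref{assu:1}, $\xi\mapsto B(x,\xi)$ is $\beta$-Lipschitz, so $h\in\operatorname{Lip}_1$. The Kantorovich--Rubinstein formula then gives
\[
\langle \By(x)-\Bz(x),e\rangle=\beta\bigl(\Ex_{\mm_y}h-\Ex_{\mm_z}h\bigr)\le \beta\,\Wass_1(\mm_y,\mm_z).
\]
Here I assume the expectations are well defined, as implicit in the definition of $\Bm$. Taking the supremum over unit $e$ yields the norm of the vector on the left. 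Applying \cref{assu:W1} gives the bound $\beta\tau\|y-z\|$, uniformly in $x$.

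\textbf{Item \ref{it:lemm:1-2}.} Write $u:=x-\gamma\Bxb(x)$ and $v:=x-\gamma \textup{B}_{\mm_x}(x)$. Then
\[
\lambda\Exb(x)=\bigl[x-\Resolvent{\gamma}{A}(v)\bigr]-\bigl[x-\Resolvent{\gamma}{A}(u)\bigr]=\Resolvent{\gamma}{A}(u)-\Resolvent{\gamma}{A}(v).
\]
Since $\Resolvent{\gamma}{A}$ is (firmly) nonexpansive, we get $\lambda\|\Exb(x)\|\le\|u-v\|=\gamma\|\textup{B}_{\mm_x}(x)-\Bxb(x)\|$. Item \ref{it:lemm:1-1} with $y=x$, $z=\xbar$ then bounds this by $\gamma\beta\tau\|x-\xbar\|$.

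\textbf{Item \ref{it:lemm:1-3}.} This is the only step needing an idea, because naive Cauchy--Schwarz/Young yields the wrong sign on $\|\Exb\|^2$. The fix is the polarization identity with $a=\lambda\Exb(x)$ and $b=x-\xbar$:
\[
\lambda\langle\Exb(x),x-\xbar\rangle=\tfrac12\bigl(\lambda^2\|\Exb(x)\|^2+\|x-\xbar\|^2-\|\lambda\Exb(x)-(x-\xbar)\|^2\bigr).
\]
The triangle inequality together with \ref{it:lemm:1-2} gives $\|\lambda\Exb(x)-(x-\xbar)\|\le(1+\gamma\beta\tau)\|x-\xbar\|$. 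Substituting this, dropping the nonnegative term $\tfrac12\|x-\xbar\|^2$, and dividing by $\lambda$ yields exactly the claimed inequality.
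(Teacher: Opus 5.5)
Your proof is correct. Item (ii) uses the same resolvent-nonexpansiveness argument as the paper. For (i), the paper only cites \cite[Lemma~3.9 and Corollary~3.10]{EFA}, whereas your scalarization $h=\beta^{-1}\langle B(x,\cdot),e\rangle\in\operatorname{Lip}_1$ followed by Kantorovich--Rubinstein duality makes the argument self-contained. The genuine difference is in (iii). The paper sets $z=J_{\gamma A}(u)$ and $\bar z=J_{\gamma A}(\bar u)$ with $u=x-\gamma\Bx(x)$ and $\bar u=x-\gamma\Bxb(x)$, so that $\lambda\Exb(x)=\bar z-z$. It then invokes firm nonexpansiveness, $\langle\bar z-z,\bar u-u\rangle\ge\|\bar z-z\|^2$, splits $x-\xbar=(\bar u-u)+\bigl(x-\xbar-(\bar u-u)\bigr)$, and closes with Cauchy--Schwarz and Young with parameter $p=1$. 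Your polarization argument never uses firm nonexpansiveness: it shows that (iii) is a purely metric consequence of (ii). It is also slightly sharper. Before you discard $\tfrac12\|x-\xbar\|^2$, your constant is $\frac{(1+\gamma\beta\tau)^2-1}{2\lambda}$, which vanishes when $\beta\tau=0$; this is consistent with $\Exb\equiv 0$ for decision-independent distributions. The paper's Young step, by contrast, still leaves $\frac{1}{2\lambda}$ in that case. One remark on your commentary: the step is less delicate than you suggest. The term $\frac{\lambda}{2}\|\Exb(x)\|^2$ is itself bounded by $\frac{\gamma^2\beta^2\tau^2}{2\lambda}\|x-\xbar\|^2$ via (ii). Hence plain Cauchy--Schwarz, $\langle\Exb(x),x-\xbar\rangle\ge-\|\Exb(x)\|\,\|x-\xbar\|$, combined with (ii) already gives the same sharpened inequality, so the sign issue you mention does not arise.
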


\begin{proof}
The property \ref{it:lemm:1-1} follows directly from \cite[Lemma~3.9 and Corollary~3.10]{EFA}. To prove \ref{it:lemm:1-2}, we see that for $x\in \cH$
    \begin{align*}
         \|\Exb(x)\| &= \|\opTx{\lambda}{\gamma}(x)-\Txb(x)\|\\
         &= \left\|\frac{1}{\lambda} \left[ x - J_{\gamma A}(x - \gamma \Bx(x)) \right] - \frac{1}{\lambda} \left[ x - J_{\gamma A}(x - \gamma \Bxb(x)) \right]\right\|\\
         &\leq \frac{\gamma}{\lambda}\|\Bx(x) - \Bxb(x)\|\\
         &\leq \frac{\gamma}{\lambda}\beta \tau \|x- \bar{x}\|,
    \end{align*}
    where the third inequality follows from 1-Lipschitz continuity of the resolvent $J_{\gamma A}$ and the last inequality follows from \ref{it:lemm:1-1}. To finish the proof; we set $z = J_{\gamma A} (u)$ with $u = x - \gamma \Bx(x)$, $\bar{z}= J_{\gamma A} (\bar{u})$ with $\bar{u} = x - \gamma \Bxb(x)$, then
    \begin{equation}\label{eq:inner}
    \begin{aligned}
    \langle \Exb(x), x-\bar{x}\rangle &= \left\langle \opTx{\lambda}{\gamma}(x)-\Txb(x), x- \bar{x}\right\rangle\\
    &= \left\langle\frac{1}{\lambda} ( x - z ) - \frac{1}{\lambda} ( x - \bar{z}), x -\bar{x}  \right\rangle\\
    & = \frac{1}{\lambda}\langle \bar{z}-z, x-\bar{x}\rangle.
    \end{aligned}
    \end{equation}
Since $J_{\gamma A}$ is firmly nonexpansive (see \eg \cite[Proposition 23.8]{Bauschke&Combettes}), then 
$\langle \bar{z}-z, \bar{u}-u\rangle \geq \|\bar{z}-z\|^2$. Therefore,
$$
\begin{aligned}
    \langle \bar{z}-z, x - \bar{x}\rangle  &= \langle \bar{z}-z, \bar{u}-u\rangle + \langle \bar{z}-z, x-\bar{x} - (\bar{u}-u)\rangle\\
    &\geq \|\bar{z}-z\|^2  - \|\bar{z}-z\| \|x-\bar{x} - (\bar{u}-u)\|\\
    &= \|\bar{z}-z\|^2  - \|\bar{z}-z\| \|x-\bar{x} - \gamma(B_{\m_x}(x) - B_{\m_{\bar{x}}}(x))\|\\
    &\overset{\ref{it:lemm:1-1}}{\geq} \|\bar{z}-z\|^2  - \|\bar{z}-z\| \Big(\|x-\bar{x}\| + \gamma\beta \tau\|x-\bar{x}\|\Big)\\
    &=  \|\bar{z}-z\|^2  - (1+\gamma \beta \tau)\|\bar{z}-z\|\|x-\bar{x}\|\\
    &\geq \|\bar{z}-z\|^2 - \frac{p}{2}\|\bar{z}-z\|^2 - \frac{(1+\gamma \beta \tau)^2}{2 p}\|x-\bar{x}\|^2,
\end{aligned}
$$
where in the last inequality we have used Young's inequality with $p>0$. Therefore, 
$$
 \langle \Exb(x), x-\bar{x}\rangle \geq \frac{1}{\lambda}(1-\frac{p}{2})\|z-\bar{z}\|^2 - \frac{(1+\gamma \beta \tau)^2}{2\lambda p}\|x- \bar{x}\|^2.
$$
Note that $\Exb(x) = \frac{1}{\lambda}(\bar{z}-z)$, we choose $p = 1$, then
$$
 \langle \Exb(x), x-\bar{x}\rangle \geq \frac{\lambda}{2}\|\Exb(x)\|^2 - \frac{(1+\gamma \beta \tau)^2}{2\lambda }\|x- \bar{x}\|^2.
$$
\end{proof}

The following lemma summarizes the key properties of the operator $\opT{\bar x}{\lambda}{\gamma}$. It is an adaptation of \cite[Lemma~2.2]{boct2024second} to our setting, where the operator is associated with the fixed equilibrium distribution $\mm_{\bar{x}}$.

\begin{lemma}\label{lemm:T}
    Suppose that \cref{assu:1,assu:2} hold and let $\xbar \in \zer(A+\Bxb)$.
    \begin{enumerate}[(i)]
        \item\label{it:lemT_1} For any $\lambda > 0$ and $\gamma \in (0, 2\theta)$, the operator $\opT{\bar x}{\lambda}{\gamma}$ is $\frac{\lambda}{2}$-cocoercive.
        
        \item\label{it:lemT_2} For all $\lambda_1, \lambda_2 > 0$, $\gamma_1, \gamma_2 \in (0, 2\theta)$, and $x, y \in \mathcal{H}$, the following estimates hold:
        \begin{equation*}
        \begin{aligned}
            &\left\|\lambda_1 \opT{\bar x}{\lambda_1}{\gamma_1}(x) - \lambda_2 \opT{\bar x}{\lambda_2}{\gamma_2}(y)\right\| \\
            &\quad \leq 4\|x-y\| + \frac{4 \theta |\gamma_1-\gamma_2|}{\gamma_1}\|\Bxb(x)\| + \frac{2|\gamma_1-\gamma_2|}{\gamma_1}\|x-\bar{x}\|,
        \end{aligned}
        \end{equation*}
        and
        \begin{equation*}
        \begin{aligned}
            &\left\| \opT{\bar x}{\lambda_1}{\gamma_1}(x) - \opT{\bar x}{\lambda_2}{\gamma_2}(y)\right\| \\
            &\quad \leq \frac{1}{\lambda_1}\left[4\|x-y\| + 4 \theta \frac{|\gamma_1-\gamma_2|}{\gamma_1}\|\Bxb(x)\| + 2 \frac{|\gamma_1-\gamma_2|}{\gamma_1}\|x-\bar{x}\|\right] \\
            &\qquad + 2 \frac{|\lambda_2-\lambda_1|}{\lambda_1 \lambda_2}\|y-\bar{x}\|.
        \end{aligned}
        \end{equation*}

        \item\label{it:lemT_3} Let $x(\cdot)$ be a strong global solution to \eqref{eq:split_din_avd}. Then, for every $t \geq t_0$, the time derivative of the operator associated with the fixed equilibrium measure satisfies:
        \[
            \left\|\frac{\dd}{\dd t}\left(\lambda(t) \opT{\bar x}{\lambda(t)}{\gamma(t)}(x(t))\right)\right\| \leq 4\|\dot{x}(t)\| + 4 \theta \frac{|\dot{\gamma}(t)|}{\gamma(t)}\|\Bxb(x(t))\| + 2 \frac{|\dot{\gamma}(t)|}{\gamma(t)}\|x(t)-\bar{x}\|.
        \]
    \end{enumerate}
\end{lemma}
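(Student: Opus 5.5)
The approach is to reduce everything to the deterministic setting of \cite[Lemma~2.2]{boct2024second}. Once the measure is frozen at $\mm_{\bar x}$, the map $x\mapsto \Bxb(x)$ is a fixed single-valued operator. By \cref{lem:Bm-cocoercive} it is $\theta$-cocoercive, and $\xbar\in\zer(A+\Bxb)$. So $\opT{\bar x}{\lambda}{\gamma}$ is exactly the forward-backward operator $T^{A,\Bxb}$ of \eqref{eq:FB-Op} with step $\gamma$ rescaled by $1/\lambda$, and each item becomes a statement about a classical forward-backward map.

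For \ref{it:lemT_1}, I will use that $\Id-\gamma\Bxb$ is $\gamma/(2\theta)$-averaged for $\gamma\in(0,2\theta)$ and that $J_{\gamma A}$ is firmly nonexpansive. Hence $S:=J_{\gamma A}\circ(\Id-\gamma\Bxb)$ is averaged with constant at most $2\theta/(4\theta-\gamma)<1$, whenever $\gamma<2\theta$. Writing $S=(1-\kappa)\Id+\kappa N$ with $N$ nonexpansive and $\kappa\le 1$ gives $\Id-S=\kappa(\Id-N)$. Since $\Id-N$ is $\tfrac12$-cocoercive, $\Id-S$ is $\tfrac{1}{2\kappa}\ge\tfrac12$-cocoercive. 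Dividing by $\lambda$, the operator $\opT{\bar x}{\lambda}{\gamma}=\lambda^{-1}(\Id-S)$ is $\tfrac{\lambda}{2}$-cocoercive.

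For \ref{it:lemT_2}, I write $\lambda_i\opT{\bar x}{\lambda_i}{\gamma_i}=\Id-J_{\gamma_iA}(\Id-\gamma_i\Bxb)$ and split the difference into two pieces. The first is $[\lambda_1 T_{\gamma_1}(x)-\lambda_1T_{\gamma_1}(y)]$, bounded by $2\|x-y\|$ (indeed by $\|x-y\|$ plus the Lipschitz constant of $S$, which is at most $1$); this comfortably fits under the constant $4$. The second is $J_{\gamma_2A}(y-\gamma_2\Bxb(y))-J_{\gamma_1A}(y-\gamma_1\Bxb(y))$, but it is cleaner to evaluate it at $x$ and absorb the switch from $y$ to $x$ into another $2\|x-y\|$ term. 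The change of resolvent parameter is the main obstacle. I will handle it with the resolvent identity $J_{\gamma_2A}(u)=J_{\gamma_1A}\bigl(\tfrac{\gamma_1}{\gamma_2}u+(1-\tfrac{\gamma_1}{\gamma_2})J_{\gamma_2A}u\bigr)$, or equivalently the standard estimate $\|J_{\gamma_1A}u-J_{\gamma_2A}u\|\le\frac{|\gamma_1-\gamma_2|}{\gamma_1}\|u-J_{\gamma_2A}u\|$. Applied with $u=x-\gamma_2\Bxb(x)$, this gives $\|u-J_{\gamma_2A}u\|$, which I bound using $\xbar=J_{\gamma_2A}(\xbar-\gamma_2\Bxb(\xbar))$ (from \cref{lem:zeros}). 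Nonexpansiveness of $\Id-J$ gives $\|u-J_{\gamma_2A}u\|\le\|x-\xbar\|+\gamma_2\|\Bxb(x)-\Bxb(\xbar)\|+\gamma_2\|\Bxb(\xbar)\|$, which I then rearrange with $\gamma_2<2\theta$ and the $1/\theta$-Lipschitz property into $2\|x-\xbar\|+2\theta\|\Bxb(x)\|$-type terms. The remaining difference $(\gamma_1-\gamma_2)\Bxb(x)$ inside the resolvent also contributes $|\gamma_1-\gamma_2|\,\|\Bxb(x)\|\le 2\theta\frac{|\gamma_1-\gamma_2|}{\gamma_1}\|\Bxb(x)\|$. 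Collecting constants yields the stated $4$, $4\theta$, $2$. If the bookkeeping of constants does not close, I will follow the exact splitting used in the proof of \cite[Lemma~2.2]{boct2024second} verbatim, since the frozen-measure operator satisfies all of its hypotheses.

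For the second inequality of \ref{it:lemT_2}, I write $T_1(x)-T_2(y)=\frac{1}{\lambda_1}[\lambda_1T_1(x)-\lambda_2T_2(y)]+(\frac{1}{\lambda_1}-\frac{1}{\lambda_2})\lambda_2T_2(y)$. The first term is handled by the first inequality. For the second, $\|\lambda_2T_2(y)\|=\|\lambda_2T_2(y)-\lambda_2T_2(\xbar)\|\le2\|y-\xbar\|$, using $T_2(\xbar)=0$ and the $2$-Lipschitz bound on $\Id-S$.

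For \ref{it:lemT_3}, I apply the first estimate of \ref{it:lemT_2} with $x=x(t+h)$, $y=x(t)$, $\gamma_1=\gamma(t+h)$ and $\gamma_2=\gamma(t)$, divide by $h$, and let $h\to0$. This uses the absolute continuity of $x$ and $\gamma$ and the continuity of $\Bxb$ at points where the derivatives exist.
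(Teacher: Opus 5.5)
Your reduction is the paper's argument. The paper gives no separate proof; it invokes \cite[Lemma~2.2]{boct2024second} for the frozen operator $\Bxb$, which is $\theta$-cocoercive by \cref{lem:Bm-cocoercive} and has $\xbar$ as a zero. At that level your proposal is complete. Your sketches of (i) (the averagedness constant $2\theta/(4\theta-\gamma)$ is right), of the second inequality in (ii), and of (iii) are also correct.

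The explicit derivation you offer for the first inequality in (ii) does not work as written, for two reasons.

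First, the estimate $\|J_{\gamma_1A}u-J_{\gamma_2A}u\|\le\frac{|\gamma_1-\gamma_2|}{\gamma_1}\|u-J_{\gamma_2A}u\|$ mixes indices. The resolvent identity gives either the factor $\frac{|\gamma_1-\gamma_2|}{\gamma_2}$ in front of $\|u-J_{\gamma_2A}u\|$, or $\frac{|\gamma_1-\gamma_2|}{\gamma_1}$ in front of $\|u-J_{\gamma_1A}u\|$. The mixed form is false: for $A=a\Id$ on $\R$, the ratio of its left side to its right side is $\gamma_1/(\gamma_2(1+\gamma_1a))$, which exceeds $1$ when $\gamma_1>\gamma_2$ and $a$ is small.

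Second, going through $\xbar=J_{\gamma_2A}(\xbar-\gamma_2\Bxb(\xbar))$ brings in $\|\Bxb(\xbar)\|$. Trading it for $\|\Bxb(x)\|$ costs another $\frac{\gamma_2}{\theta}\|x-\xbar\|$. Uniformly in $\gamma_2\in(0,2\theta)$ you therefore get the coefficient $3$, not $2$, in front of $\|x-\xbar\|$.

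The constants close if you pivot at $u:=x-\gamma_1\Bxb(x)$, so that $J_{\gamma_1A}u=S_1x$, where $S_i:=J_{\gamma_iA}\circ(\Id-\gamma_i\Bxb)$. Then
\[
\|S_1x-S_2x\|\le\frac{|\gamma_1-\gamma_2|}{\gamma_1}\bigl\|(x-S_1x)-\gamma_1\Bxb(x)\bigr\|+|\gamma_1-\gamma_2|\,\|\Bxb(x)\|.
\]
Using $\|x-S_1x\|=\|(x-\xbar)-(S_1x-S_1\xbar)\|\le 2\|x-\xbar\|$ and $\gamma_1<2\theta$, this gives exactly $\frac{|\gamma_1-\gamma_2|}{\gamma_1}\bigl(2\|x-\xbar\|+4\theta\|\Bxb(x)\|\bigr)$, without ever touching $\Bxb(\xbar)$. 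Adding $\|(\Id-S_2)x-(\Id-S_2)y\|\le 2\|x-y\|$ then yields the first estimate, even with $2\|x-y\|$ in place of $4\|x-y\|$.
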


\begin{remark}
    As pointed out in \cite{boct2024second}, when $B \equiv 0$ and $\lambda = \gamma$, the operator $\opT{\bar x}{\lambda}{\lambda}$ coincides with the Yosida regularization of $A$ of index $\lambda$, \ie $\opT{\bar x}{\lambda}{\lambda} = A_\lambda \coloneqq \frac{1}{\lambda} (\Id - J_{\lambda A})$. In this case, the dynamics \eqref{eq:split_din_avd} reduce to the Dynamic Inertial Newton system with Asymptotic Vanishing Damping (DIN-AVD) studied in \cite{Attouch&Laszlo2}.
\end{remark}

In order to further study the gap operator $\Exb$, we introduce the following assumption ensures the Lipschitz continuity of the operator $\Bx$.

\begin{assumption}\label{assu:Bm}
    There exists $L>0$ such that the map $x \mapsto B(x, \xi)$ is $L$-Lipschitz continuous for $\mm_x$-almost every $\xi\in \Xi$.
\end{assumption}

If \cref{assu:Bm} holds, then for any $x \in \cH$ and any $y,z\in \mathcal{H}$, we have 
\[
    \Vert\Bx(y) - \Bx(z)\Vert = \Vert\mathbb{E}_{\xi\sim\mm_x}[B(y, \xi)-B(z, \xi)]\Vert \leq L\Vert y-z\Vert.
\]
In other words, for every fixed $x\in\cH$, the operator $\Bx$ is $L$-Lipschitz continuous.
\begin{remark}
By virtue of \cref{lem:Bm-cocoercive}, \cref{assu:2} implies that 
$\Bm$ is Lipschitz continuous with constant $L \leq \frac{1}{\theta}$. 
However, \cref{assu:Bm} may hold with a sharper constant $L < \frac{1}{\theta}$, 
and is assumed independently when a tighter Lipschitz bound is required.
\end{remark}
The next lemma provides a Lipschitz-type estimate for the mapping $(x,\lambda)\mapsto \Exb(x)$, along with a bound on the temporal variation of the gap function along the trajectory.

\begin{lemma}\label{lemm:2}
    Under \cref{assu:1,assu:W1,assu:Bm}, the following properties hold:
    \begin{enumerate}[(i)]
        \item\label{it:lemm2-1} Let $\lambda_1, \lambda_2 > 0$ and $\gamma > 0$. For any $x, z \in\cH$, we have
        $$ 
        \|\Exbar{\lambda_1}{ \gamma}(x) - \Exbar{\lambda_2}{\gamma}(z)\|
        \leq \frac{|\lambda_1-\lambda_2|}{\lambda_1\lambda_2}\beta\tau\|x-\bar{x}\| + \frac{c_0}{\lambda_2}\|x-z\|,
        $$
        where $c_0 = 2+ 2\gamma L + \gamma \beta \tau$.
        
        \item\label{it:lemm2-2} Assume that $\lambda(\cdot)$ is a continuously differentiable function and that $\gamma(t) \equiv \gamma$ is constant. If $x(\cdot)$ is a global solution to \eqref{eq:sol}, then for every $t \geq t_0$, we have
        $$
        \left\|\frac{\dd}{\dd t } \Exbar{\lambda(t)}{\gamma}(x(t)) \right\| \leq \frac{ |\dot{\lambda}(t)| }{\lambda(t)^2}\beta\tau\|x(t)-\bar{x}\| + \frac{c_0}{\lambda(t)}\|\dot{x}(t)\|.
        $$
    \end{enumerate}
\end{lemma}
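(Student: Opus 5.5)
The plan is to work with the unnormalized gap map $G(x) := \lambda\,\Exbar{\lambda}{\gamma}(x)$, which does not depend on $\lambda$. Unfolding \cref{def:optT},
\[
G(x) = J_{\gamma A}\bigl(x - \gamma \Bxb(x)\bigr) - J_{\gamma A}\bigl(x - \gamma \Bx(x)\bigr),
\qquad \Exbar{\lambda}{\gamma}(x) = \tfrac{1}{\lambda}\,G(x).
\]
Nonexpansiveness of $J_{\gamma A}$ together with \cref{lemm:1}\ref{it:lemm:1-1} gives $\|G(x)\| \le \gamma\beta\tau\|x-\bar x\|$. This is the bound already obtained in \cref{lemm:1}\ref{it:lemm:1-2}.

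For \ref{it:lemm2-1} I split
\[
\Exbar{\lambda_1}{\gamma}(x) - \Exbar{\lambda_2}{\gamma}(z) = \Bigl(\tfrac{1}{\lambda_1} - \tfrac{1}{\lambda_2}\Bigr) G(x) + \tfrac{1}{\lambda_2}\bigl(G(x) - G(z)\bigr).
\]
The first term has norm at most $\frac{|\lambda_1-\lambda_2|}{\lambda_1\lambda_2}\,\gamma\beta\tau\|x-\bar x\|$. This carries a factor $\gamma$ that the statement does not display. I would flag this: the displayed bound follows as written when $\gamma \le 1$, and otherwise $\beta\tau$ should read $\gamma\beta\tau$. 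I would check this against how the constant is used later.

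For the second term I bound the two resolvent differences in $G(x)-G(z)$ separately, again using that $J_{\gamma A}$ is $1$-Lipschitz.
\begin{itemize}
\item By \cref{assu:Bm}, $\Bxb$ is $L$-Lipschitz, so the first difference is at most $(1+\gamma L)\|x-z\|$.
\item For the second difference I insert the intermediate term $B_{\mm_x}(z)$:
\[
\|B_{\mm_x}(x) - B_{\mm_z}(z)\| \le \|B_{\mm_x}(x) - B_{\mm_x}(z)\| + \|B_{\mm_x}(z) - B_{\mm_z}(z)\| \le (L + \beta\tau)\|x-z\|,
\]
using \cref{assu:Bm} for the first piece and \cref{lemm:1}\ref{it:lemm:1-1} for the second. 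The second difference is therefore at most $(1+\gamma L+\gamma\beta\tau)\|x-z\|$.
\end{itemize}
Summing the two bounds gives exactly $c_0 = 2 + 2\gamma L + \gamma\beta\tau$.

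For \ref{it:lemm2-2} I apply \ref{it:lemm2-1} with $x = x(t+h)$, $z = x(t)$, $\lambda_1 = \lambda(t+h)$ and $\lambda_2 = \lambda(t)$, then divide by $|h|$. Since $x(\cdot)$ is locally absolutely continuous and $\lambda(\cdot)$ is $C^1$, this shows that $t \mapsto \Exbar{\lambda(t)}{\gamma}(x(t))$ is locally Lipschitz, hence differentiable almost everywhere. At any point where both it and $x$ are differentiable, letting $h \to 0$ gives the claimed bound:
\begin{itemize}
\item $|\lambda(t+h) - \lambda(t)|/|h| \to |\dot\lambda(t)|$;
\item $\lambda(t+h)\lambda(t) \to \lambda(t)^2$;
\item $\|x(t+h) - \bar x\| \to \|x(t) - \bar x\|$;
\item $\|x(t+h) - x(t)\|/|h| \to \|\dot x(t)\|$.
\end{itemize}

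The only delicate point is this regularity and passage to the limit (the "for every $t$" in the statement should be read almost everywhere, or everywhere if $x$ is $C^1$). The algebra in \ref{it:lemm2-1} is routine once the intermediate term $B_{\mm_x}(z)$ is inserted.
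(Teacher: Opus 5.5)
Your proposal is correct and follows essentially the same route as the paper: the same split of $\Exbar{\lambda_1}{\gamma}(x)-\Exbar{\lambda_2}{\gamma}(z)$ through $\Exbar{\lambda_2}{\gamma}(x)$, the same nonexpansiveness-of-$J_{\gamma A}$ estimates producing $c_0=2+2\gamma L+\gamma\beta\tau$, and the same difference-quotient limit for (ii). Your flag about the missing factor is also justified: the paper bounds $\|J_{\gamma A}(x-\gamma\Bx(x))-J_{\gamma A}(x-\gamma\Bxb(x))\|$ by $\beta\tau\|x-\bar x\|$ instead of $\gamma\beta\tau\|x-\bar x\|$, and the displayed inequality genuinely fails for $\gamma>1$ (take $A=0$, $\cH=\R$, $B(x,\xi)=\beta\xi$, $\mm_x=\delta_{\tau x}$, so that $\bar x=0$ and $\Exbar{\lambda}{\gamma}(x)=\frac{\gamma\beta\tau}{\lambda}x$), though this is harmless downstream since $\gamma$ is a fixed constant in \cref{thm:convergence} and $\dot\lambda\equiv 0$ in \cref{thm:convergence_sm}.
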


\begin{proof}
   (1) For any $x,z\in \mathcal{H}$, we have using the definition of $\opT{z}{\lambda}{\gamma}$
    \begin{align*}
        &\|\Exb(x) - \Exb(z)\| = \left\|\left(\opT{x}{\lambda}{\gamma}(x)-\Txb(x)\right)-\left(\opT{z}{\lambda}{\gamma}(z)-\Txb(z)\right)\right\|\\
        &= \left\|-\frac{1}{\lambda} J_{\gamma A}(x-\gamma \Bx(x)) + \frac{1}{\lambda} J_{\gamma A}(x-\gamma \Bxb(x)) + \frac{1}{\lambda}J_{\gamma A}(z - \gamma \Bz(z)) - \frac{1}{\lambda}J_{\gamma A}(z - \gamma \Bxb(z))  \right\|\\
        &\leq \frac{1}{\lambda}\|J_{\gamma A}(x - \gamma \Bxb(x)) - J_{\gamma A}(z - \gamma \Bxb(z))\| + \frac{1}{\lambda}\|J_{\gamma A}(z - \gamma \Bz(z)) - J_{\gamma A}(x - \gamma \Bx(z))\|\\
        &+ \frac{1}{\lambda}\|J_{\gamma A}(x - \gamma \Bx(z)) - J_{\gamma A}(x-\gamma \Bx(x))\|\\
        &\overset{(1)}{\leq} \frac{1}{\lambda}(1+ \gamma L)\|x-z\| + \frac{1}{\lambda}(1+\gamma \beta \tau )\|z-x\| + \frac{1}{\lambda}\gamma L \|x-z\|\\
        &= \frac{1}{\lambda}(2+ 2\gamma L + \gamma \beta \tau)\|x-z\|,
    \end{align*}
    where $(1)$ follows from the 1-Lipschitz of resolvent operator, \cref{lemm:1}, and $L$-lipschitz continuity of $\Bx$ from \cref{assu:Bm}. Furthermore, set $\lambda_1, \lambda_2>0$ and $\gamma >0$, we have 
    \begin{align*}
        &\quad \| \Exbar{\lambda_1}{\gamma}(x) - \Exbar{\lambda_2}{\gamma}(x)\| = \left\| \left(\opT{x}{\lambda_1}{\gamma}(x)-\opT{\bar{x}}{\lambda_1}{\gamma}(x)\right)-\left(\opT{x}{\lambda_2}{\gamma}(x)-\opT{\bar{x}}{\lambda_2}{\gamma}(x)\right) \right\|\\
        &= \left\|\frac{1}{\lambda_1} [ -J_{\gamma A}(x-\gamma \Bx(x)) +  J_{\gamma A}(x-\gamma \Bxb(x))] + \frac{1}{\lambda_2}[J_{\gamma A}(x - \gamma \Bx(x)) - J_{\gamma A}(x - \gamma \Bxb(x)) ] \right\|\\
        &= \left\|\frac{\lambda_1-\lambda_2}{\lambda_1\lambda_2} J_{\gamma A}(x - \gamma \Bx(x)) -  \frac{\lambda_1-\lambda_2}{\lambda_1\lambda_2} J_{\gamma A}(x - \gamma \Bxb(x))\right\| \\
        &\overset{(2)}{\leq} \frac{|\lambda_1-\lambda_2|}{\lambda_1\lambda_2}\beta \tau\|x-\bar{x}\|,
    \end{align*}
    where $(2)$ follows from the 1-Lipschitz of resolvent operator and \cref{lemm:1}. Therefore,
     \begin{align*}
        &\|\Exbar{\lambda_1}{\gamma}(x) - \Exbar{\lambda_2}{\gamma}(z)\| \leq \|\Exbar{\lambda_1}{\gamma}(x) - \Exbar{\lambda_2}{\gamma}(x)\| +\| \Exbar{\lambda_2}{\gamma}(x) - \Exbar{\lambda_2}{\gamma}(z)\|\\
        &\leq \frac{|\lambda_1-\lambda_2|}{\lambda_1\lambda_2}\beta\tau\|x-\bar{x}\| + \frac{1}{\lambda_2}(2+ 2\gamma L + \gamma \beta \tau)\|x-z\|
    \end{align*}

    To prove \ref{it:lemm2-2} we proceed as follows. Given $t, s\geq t_0$ with $t\neq s$, we get from \ref{it:lemm2-1}

    \begin{align*}
        &\frac{\|\Exbar{\lambda(t)}{\gamma(t)}(x(t)) - \Exbar{\lambda(s)}{\gamma(s)}(x(s))\|}{|t-s|}\\
        &\leq \frac{|\lambda(t)-\lambda(s)|}{|t-s|} \frac{\beta\tau}{\lambda(t)\lambda(s)}\|x(t)-\bar{x}\| + \frac{1}{\lambda(s)}(2+ 2\gamma L + \gamma \beta \tau)\frac{\|x(t)-x(s)\|}{|t-s|}
    \end{align*}
    Hence, by taking the limit as $s\to t$ we get, for any $t \geq t_0$, 
    $$
    \left\|\frac{\dd}{\dd t } \Exbar{\lambda(t)}{\gamma}(x(t)) \right\| \leq \frac{|\dot{\lambda}(t)| }{\lambda(t)^2}\beta\tau \|x(t)-\bar{x}\| + \frac{1}{\lambda(t)}(2+ 2\gamma L + \gamma \beta \tau)\|\dot{x}(t)\|.
    $$
\end{proof}
\subsection{Existence and uniqueness of equilibria}\label{subsection:equilibria}
We provide two different conditions under which the equilibrium $\bar{x}$ exists and is unique. First, we introduce the following monotonicity assumption on the operator $A$.

\begin{assumption}[Uniform monotonicity]\label{assu:A-UM}
    The operator $A$ is uniformly monotone with a modulus $\phi: [0, \infty) \to [0, \infty)$. We assume that $\phi$ is continuous, strictly increasing with $\phi(0) = 0$, that the mapping $t \mapsto \frac{\phi(t)}{t}$ is non-decreasing on $(0, \infty)$, and that it satisfies:
    \[
        \phi(t) > \beta\tau t^2, \quad \forall t>0.
    \]
\end{assumption}

A particular case of \cref{assu:A-UM}, which is a very common setting, is to require strong monotonicity on $A$. This amounts to assuming that $A$ is uniformly monotone with a quadratic modulus $\phi(t) = \mu_A t^2$.

\begin{assumption}[Strong monotonicity]\label{assu:A:SM}
    There exists $\mu_A > 0$ such that for all $x\in\cH$, $A$ is $\mu_A$-strongly monotone, and the parameters satisfy $L_S:=\frac{\beta\tau}{\mu_A}$.
\end{assumption}

A direct consequence of \cref{assu:A-UM} is the following lemma, which asserts that the operator $\opT{\bar x}{\lambda}{\gamma}$ inherits the uniform monotonicity from $A$.  

\begin{lemma}[Uniform monotonicity of the splitting operator]\label{lemma:uniform_monotony}
    Suppose that \cref{assu:1,assu:2,assu:A-UM} hold. Define the strictly increasing function $\psi(t) \coloneqq t + \gamma \frac{\phi(t)}{t}$ for $t > 0$, and $\psi(0) = 0$ (where \cref{assu:A-UM} ensures that $\psi$ is a strictly increasing bijection on $[0,+\infty)$).
    Then, for any $\lambda > 0$ and $\gamma \in (0, 2\theta)$, the operator $\opT{\bar x}{\lambda}{\gamma}$ is uniformly monotone with modulus $\Phi(t) \coloneqq \frac{t}{\lambda} \left( t - \psi^{-1}(t) \right)$. If \cref{assu:A:SM} holds instead of \cref{assu:A-UM}, then $\opT{\bar x}{\lambda}{\gamma}$ is $\tilde{\mu}$-strongly monotone with $\tilde{\mu} = \frac{\gamma}{\lambda}\frac{\mu_A}{1+\gamma\mu_A}$.
\end{lemma}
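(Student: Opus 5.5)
The plan is to work directly from the firm nonexpansiveness structure of $J_{\gamma A}$ combined with the uniform monotonicity of $A$. Fix $x,y\in\cH$, set $t:=\|x-y\|$, and put $u:=x-\gamma\Bxb(x)$, $v:=y-\gamma\Bxb(y)$, $p:=J_{\gamma A}(u)$, $q:=J_{\gamma A}(v)$. By the definition of the resolvent, $\frac{1}{\gamma}(u-p)\in A(p)$ and $\frac{1}{\gamma}(v-q)\in A(q)$. Uniform monotonicity of $A$ (\cref{assu:A-UM}) therefore gives
\[
\langle (u-v)-(p-q),\,p-q\rangle \ \ge\ \gamma\,\phi(\|p-q\|),
\]
which can be rewritten as $\langle u-v,p-q\rangle\ge \|p-q\|^2+\gamma\phi(\|p-q\|)$. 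By Cauchy--Schwarz, and after dividing by $\|p-q\|$ when $p\neq q$, this yields $\psi(\|p-q\|)\le \|u-v\|$.

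Next, I bound $\|u-v\|$ by $t$. Since $\Bxb$ is $\theta$-cocoercive (\cref{lem:Bm-cocoercive}) and $\gamma\in(0,2\theta)$, the operator $\Id-\gamma\Bxb$ is nonexpansive. Indeed,
\[
\|u-v\|^2=t^2-2\gamma\langle \Bxb x-\Bxb y,x-y\rangle+\gamma^2\|\Bxb x-\Bxb y\|^2\le t^2-\gamma(2\theta-\gamma)\|\Bxb x-\Bxb y\|^2\le t^2 .
\]
Hence $\psi(\|p-q\|)\le t$, and since $\psi$ is a strictly increasing bijection, $\|p-q\|\le\psi^{-1}(t)$. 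The conclusion then follows from
\[
\lambda\langle \Txb(x)-\Txb(y),x-y\rangle=t^2-\langle p-q,x-y\rangle\ge t^2-t\,\psi^{-1}(t)=\lambda\Phi(t).
\]

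It remains to check that $\Phi$ is an admissible modulus. I will verify that $\Phi(0)=0$, that $\Phi$ is nonnegative and increasing, and that $\Phi(t)\to\infty$. Write $s=\psi^{-1}(t)$. Then $t-\psi^{-1}(t)=\gamma\phi(s)/s$, which is nonnegative and nondecreasing in $t$, because $s$ is increasing in $t$ and $\phi(s)/s$ is nondecreasing in $s$. Multiplying by the increasing factor $t/\lambda$ shows that $\Phi$ is increasing. Growth to infinity follows from $\phi(s)/s>\beta\tau s$ together with $s\to\infty$ as $t\to\infty$.

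The step I expect to be delicate is exactly this use of $\psi^{-1}$. The argument relies on the statement's assertion that $\psi$ is a bijection of $[0,\infty)$, which requires $\phi(s)/s\to0$ as $s\to0^+$. If that limit were positive, I would instead argue directly from $\psi(\|p-q\|)\le t$, using the generalized inverse $\psi^{-1}(t):=\sup\{s\ge0:\psi(s)\le t\}$. The same inequalities still go through with this definition.

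For the strongly monotone case (\cref{assu:A:SM}), take $\phi(s)=\mu_A s^2$. Then $\psi(s)=(1+\gamma\mu_A)s$ and $\psi^{-1}(t)=t/(1+\gamma\mu_A)$, so
\[
\Phi(t)=\frac{t^2}{\lambda}\Big(1-\frac{1}{1+\gamma\mu_A}\Big)=\frac{\gamma}{\lambda}\frac{\mu_A}{1+\gamma\mu_A}\,t^2=\tilde\mu\,t^2 .
\]
This is precisely $\tilde\mu$-strong monotonicity. This computation uses only the strong monotonicity of $A$, not the condition $\phi(t)>\beta\tau t^2$.
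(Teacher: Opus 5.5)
Your proof is correct and follows the paper's argument essentially step for step. Both use uniform monotonicity of $A$ at $p=J_{\gamma A}u$, $q=J_{\gamma A}v$ and Cauchy--Schwarz to get $\psi(\|p-q\|)\le\|u-v\|$, then nonexpansiveness of $\Id-\gamma\Bxb$ for $\gamma\in(0,2\theta)$, a final Cauchy--Schwarz on $\frac{1}{\lambda}\bigl(\Id-J_{\gamma A}\circ(\Id-\gamma\Bxb)\bigr)$, and the substitution $\phi(s)=\mu_A s^2$ for the strongly monotone case. Your additional checks, which the paper leaves implicit, are all sound: the explicit nonexpansiveness computation, the verification that $\Phi$ is an admissible modulus, and the observation that $\psi$ is a bijection only if $\phi(s)/s\to 0$ as $s\to 0^+$, with the generalized-inverse fallback otherwise.
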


\begin{proof}
    Let $x, y \in \cH$ and define $u = (\id - \gamma \Bx)(x)$ and $v = (\id - \gamma\Bx)(y)$. Since $\gamma \in (0, 2\theta)$, we have $\Vert u - v\Vert \leq \Vert x - y\Vert$. Take $p = J_{\gamma A}(u)$ and $q = J_{\gamma A}(v)$ and assume that $p\neq q$, otherwise there is nothing to prove. By definition of the resolvent, we have that $u \in p + \gamma Ap$ and $v \in q + \gamma Aq$. Thus, using the $\phi$-uniform monotonicity of $A$, we obtain:
    \[
        \langle u - v, p - q \rangle = \Vert p - q\Vert^2 + \gamma \langle Ap - Aq, p - q \rangle \geq \Vert p - q\Vert^2 + \gamma \phi(\Vert p - q\Vert),
    \]
    which gives by the Cauchy-Schwarz inequality, $\Vert u - v\Vert \Vert p - q\Vert \geq \Vert p - q\Vert^2 + \gamma \phi(\Vert p - q\Vert)$. Dividing by $\Vert p - q\Vert > 0$, we get:
    \[
        \Vert u - v\Vert \geq \Vert p - q\Vert + \gamma \frac{\phi(\Vert p - q\Vert)}{\Vert p - q\Vert} = \psi(\Vert p - q\Vert).
    \]
    Since $\psi$ is strictly increasing and $\Vert u - v\Vert \leq \Vert x - y\Vert$, we have $\Vert p - q\Vert \leq \psi^{-1}(\Vert u - v\Vert) \leq \psi^{-1}(\Vert x - y\Vert)$. Thus, $S_\gamma = J_{\gamma A} \circ (\id - \gamma \Bx)$ satisfies $\Vert S_\gamma x - S_\gamma y\Vert \leq \psi^{-1}(\Vert x - y\Vert)$.
    
    Finally, evaluating the operator $\opT{\bar x}{\lambda}{\gamma} = \frac{1}{\lambda}(\id - S_\gamma)$:
    \begin{align*}
        \langle \opT{\bar x}{\lambda}{\gamma}(x) - \opT{\bar x}{\lambda}{\gamma}(y), x - y \rangle 
        &= \frac{1}{\lambda} \Vert x - y\Vert^2 - \frac{1}{\lambda} \langle S_\gamma x - S_\gamma y, x - y \rangle \\
        &\geq \frac{1}{\lambda} \Vert x - y\Vert^2 - \frac{1}{\lambda} \Vert S_\gamma x - S_\gamma y\Vert \Vert x - y\Vert \\
        &\geq \frac{\Vert x - y\Vert}{\lambda} \left( \Vert x - y\Vert - \psi^{-1}(\Vert x - y\Vert) \right) = \Phi(\Vert x - y\Vert).
    \end{align*}
    To finish the proof, notice that if $A$ is $\mu_A$-strongly monotone, \ie $\phi(t) = \mu_A t^2$, then $\psi(t) = (1+\gamma\mu_A)t$. Direct computations yield that $\Phi(t) = \frac{\gamma}{\lambda}\frac{\mu_A}{1+\gamma\mu_A} t^2$, and thus $\opT{\bar x}{\lambda}{\gamma}$ is $\tilde{\mu}$-strongly monotone with $\tilde{\mu} \coloneqq \frac{\gamma}{\lambda}\frac{\mu_A}{1+\gamma\mu_A}$.
\end{proof}

The main result of this section is the following.

\begin{proposition}[Existence of Equilibrium via the forward-backward operator]\label{prop:existence-equil-UM}
     Suppose that \cref{assu:1,assu:2,assu:A-UM} hold and let $\gamma \in (0, 2\theta)$. Then there exists a unique equilibrium point $\bar{x} \in \cH$ such that $\opT{\bar x}{\lambda}{\gamma}(\bar{x}) = 0$ for all $\lambda >0$, or equivalently $\bar{x} = J_{\gamma A}(\bar{x} - \gamma \Bxb(\bar{x}))$.
\end{proposition}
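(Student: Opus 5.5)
We recast the equilibrium condition as a fixed-point problem in the decision variable. For each $y\in\cH$, consider the \emph{frozen} inclusion
\[
0\in A(x)+\By(x),
\]
in which the distribution is fixed at $\mm_y$.

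\textbf{Step 1: the frozen inclusion has a unique solution.} By \cref{lem:Bm-cocoercive}, $\By$ is $\theta$-cocoercive, so $A+\By$ is maximally monotone. It is also uniformly monotone with modulus $\phi$, because $\By$ is monotone. Since $\phi(t)>\beta\tau t^2$ and $\phi(t)/t$ is non-decreasing, $\phi(t)/t\to\infty$, so $A+\By$ is coercive and hence surjective (Browder--Minty, or \cite[Corollary 21.24]{Bauschke&Combettes}). Uniform monotonicity with strictly increasing $\phi$ gives uniqueness. This defines a solution map $G:\cH\to\cH$, $y\mapsto G(y)$. By \cref{lem:zeros}, $G(y)$ is equivalently characterized by $G(y)=J_{\gamma A}(G(y)-\gamma\By(G(y)))$.

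\textbf{Step 2: $G$ is a strict contraction in the sense $\|G(y)-G(z)\|<\|y-z\|$ for $y\neq z$.} Set $p=G(y)$ and $q=G(z)$. Then $-\By(p)\in A(p)$ and $-\Bz(q)\in A(q)$. Uniform monotonicity of $A$ gives
\[
\phi(\|p-q\|)\le\langle \Bz(q)-\By(p),p-q\rangle .
\]
Split $\Bz(q)-\By(p)=(\Bz(q)-\By(q))+(\By(q)-\By(p))$. The second piece contributes a nonpositive term by monotonicity of $\By$. The first is bounded by $\beta\tau\|y-z\|$ via \cref{lemm:1}\ref{it:lemm:1-1}. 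Hence
\[
\phi(\|p-q\|)\le\beta\tau\|y-z\|\,\|p-q\|,
\]
so $\phi(r)/r\le\beta\tau\|y-z\|$ with $r=\|p-q\|$. If $r\ge\|y-z\|>0$, monotonicity of $\phi(t)/t$ gives $\phi(r)/r\ge\phi(\|y-z\|)/\|y-z\|>\beta\tau\|y-z\|$, a contradiction. Hence $r<\|y-z\|$.

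\textbf{Step 3: obtaining a fixed point.} This is the main obstacle. A strict (non-uniform) contraction on an unbounded Hilbert space need not have a fixed point, so we need either a uniform contraction factor on bounded sets plus an invariant ball, or a compactness substitute.

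For the invariant ball, apply the Step 2 estimate with a fixed base point $z_0$. Writing $r=\|G(y)-G(z_0)\|$ and $s=\|y-z_0\|$, we have $\phi(r)/r\le\beta\tau s$. By \cref{assu:A-UM}, $\phi(r)/r>\beta\tau r$, and so $r<s$. This gives $\|G(y)-z_0\|\le s+\|G(z_0)-z_0\|$, which is not yet invariant.

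A better route is to avoid invariant balls altogether and use the iteration $y_{k+1}=G(y_k)$. The distances $d_k=\|y_{k+1}-y_k\|$ are decreasing, and $\phi(d_{k+1})/d_{k+1}\le\beta\tau d_k$. If $d_k\to d>0$, pass to the limit using continuity of $\phi$ to get $\phi(d)/d\le\beta\tau d$, contradicting $\phi(d)>\beta\tau d^2$. So $d_k\to0$.

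To get a Cauchy sequence, the natural tool is the Browder--Göhde/Kirk fixed point theorem: a nonexpansive map on a bounded closed convex set has a fixed point. For this we need boundedness. I would extract it by comparing $y_k$ with $G$ restricted to a large ball and using coercivity from Step 1, namely $\phi(t)/t\to\infty$, which makes $\|G(y)\|$ grow sublinearly relative to... Honestly the cleanest path is to assume the strict inequality yields a modulus $\kappa(s)<1$ on bounded sets and conclude via a Matkowski/Boyd--Wong-type theorem. The function $\omega(s)=\sup\{r:\phi(r)/r\le\beta\tau s\}$ is upper semicontinuous with $\omega(s)<s$, and $\|G(y)-G(z)\|\le\omega(\|y-z\|)$. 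The Boyd--Wong theorem then gives a unique fixed point $\bar x$ directly, with no boundedness needed. This is the step I expect to need care: checking upper semicontinuity of $\omega$ from the continuity and monotonicity of $\phi$.

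\textbf{Step 4: conclusion.} A fixed point $\bar x=G(\bar x)$ satisfies $0\in A(\bar x)+\Bxb(\bar x)$. By \cref{lem:zeros}, this is equivalent to $\opT{\bar x}{\lambda}{\gamma}(\bar x)=0$ for all $\lambda>0$, and in particular to $\bar x=J_{\gamma A}(\bar x-\gamma\Bxb(\bar x))$. Uniqueness follows from Step 2: two distinct equilibria $x_1\neq x_2$ would satisfy $\|x_1-x_2\|=\|G(x_1)-G(x_2)\|<\|x_1-x_2\|$, which is impossible.
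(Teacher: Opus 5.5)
Your proof is correct, but it follows a different route from the paper.

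\textbf{The paper's route.} The paper never solves a frozen inclusion. It applies Boyd--Wong directly to the forward--backward map $S(x)=J_{\gamma A}(x-\gamma\Bx(x))$, combining three ingredients:
the modulus $\|J_{\gamma A}u-J_{\gamma A}v\|\le\psi^{-1}(\|u-v\|)$ from the proof of \cref{lemma:uniform_monotony};
the nonexpansiveness of $\id-\gamma\Bx$, which is where $\gamma\in(0,2\theta)$ is used;
and \cref{lemm:1}\ref{it:lemm:1-1}.
This gives the explicit continuous comparison function $\Psi(t)=\psi^{-1}((1+\gamma\beta\tau)t)$, and $\Psi(t)<t$ is exactly the condition $\phi(t)>\beta\tau t^2$.

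\textbf{Your route.} Your map $G(y)=\zer(A+\By)$ is the one the paper uses only in the strongly monotone case (\cref{prop:existence-equil-SM}). Your inequality $\phi(r)\le\beta\tau s\,r$ is the natural uniformly monotone analogue of $\mu_A r\le\beta\tau s$ there.

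\textbf{Trade-offs.} The paper's route is more economical. It needs no existence step for the frozen problem (no supercoercivity or surjectivity argument). It produces a continuous comparison function, matching the version of Boyd--Wong stated in \cref{thm:boyd-wong}. It also works directly with the fixed-point form in the statement.

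Your route gives an argument that does not involve $\gamma$. It uses only monotonicity of $\By$ rather than averagedness of $\id-\gamma\Bx$. It also avoids inverting $\psi$; the paper's claim that $\psi$ is a bijection of $[0,\infty)$ tacitly needs $\phi(t)/t\to0$ as $t\to0^+$.

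The price is that $\omega(s)=\sup\{r>0:\phi(r)/r\le\beta\tau s\}$ (with $\sup\emptyset=0$) may jump upward wherever $\phi(t)/t$ is locally constant. You therefore need the original Boyd--Wong theorem (upper semicontinuity from the right) or Matkowski's theorem, not the continuous version in the appendix.

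\textbf{The check you flag is routine.}
The supremum is attained, by continuity of $\phi(t)/t$ on $(0,\infty)$.
$\omega$ is non-decreasing and right-continuous, hence upper semicontinuous.
$\|G(y)-G(z)\|\le\omega(\|y-z\|)$ holds by definition.
$\omega(s)<s$, since $\beta\tau\,\omega(s)<\phi(\omega(s))/\omega(s)\le\beta\tau s$ whenever $\omega(s)>0$.

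\textbf{For the write-up.} Delete the abandoned detours of Step 3 (invariant ball, decay of iterate distances, Browder--G\"ohde--Kirk). Also note that both proofs use \cref{assu:W1} through \cref{lemm:1}\ref{it:lemm:1-1}, even though it is not among the listed hypotheses.
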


\begin{proof}
    First, let us define the fixed-point mapping $S: \cH \to \cH$ associated with $\opTx{\lambda}{\gamma}$, given by:
    \[
        S(x) \coloneqq J_{\gamma A}(x - \gamma \Bx(x)).
    \]
    Clearly, equilibria in the sense of \cref{def:equilibrium} are exactly the fixed points of $S$. 
    Let $x, y \in \cH$. We have:
    \[
        \Vert S(x) - S(y)\Vert = \Vert J_{\gamma A}(x - \gamma \Bx(x)) - J_{\gamma A}(y - \gamma \By(y)) \Vert.
    \]
    Recall from the proof of \cref{lemma:uniform_monotony} that $J_{\gamma A}$ satisfies the bound $\Vert J_{\gamma A}(u) - J_{\gamma A}(v)\Vert \leq \psi^{-1}(\Vert u - v\Vert)$, where $\psi(t) = t + \gamma \frac{\phi(t)}{t}$ for $t > 0$. Thus, applying this yields:
    \begin{equation}\label{eq:inv_psi}
        \Vert S(x) - S(y)\Vert \leq \psi^{-1} \Big( \Vert (x - \gamma \Bx(x)) - (y - \gamma \By(y)) \Vert \Big).
    \end{equation}
    To bound the argument of $\psi^{-1}$ in \eqref{eq:inv_psi}, we add and subtract $\gamma \Bx(y)$:
    \begin{align*}
        \Vert (x - \gamma \Bx(x)) - (y - \gamma \By(y)) \Vert 
        &= \Vert (\id - \gamma \Bx)(x) - (\id - \gamma \Bx)(y) + \gamma\left(\By(y) - \Bx(y)\right) \Vert \\
        &\leq \Vert (\id - \gamma \Bx)(x) - (\id - \gamma \Bx)(y) \Vert + \gamma \Vert \By(y) - \Bx(y) \Vert.
    \end{align*}
    Since $\Bx$ is $\theta$-cocoercive and $\gamma \in (0, 2\theta)$, the operator $\id - \gamma \Bx$ is averaged, and thus non-expansive (see, \eg \cite[Proposition 4.39]{Bauschke&Combettes}). Furthermore, using \cref{lemm:1}, we obtain:
    \[
        \Vert (\id - \gamma \Bx)(x) - (\id - \gamma \Bx)(y) \Vert \leq \Vert x - y\Vert, \quad \text{and} \quad \Vert \By(y) - \Bx(y) \Vert \leq \beta \tau \Vert x - y\Vert.
    \]    
    Plugging this into \eqref{eq:inv_psi} gives:
    \[
        \Vert S(x) - S(y)\Vert \leq \psi^{-1} \left( (1 + \gamma \beta \tau)\Vert x - y\Vert \right).
    \]
    Let us define the nonlinear mapping $\Psi: [0, \infty) \to [0, \infty)$ by $\Psi(t) \coloneqq \psi^{-1}((1 + \gamma \beta \tau)t)$. Since $\phi$ is continuous, $\Psi$ is also continuous and satisfies $\Psi(0) = 0$. The previous bound can be rewritten exactly as:
    \[
        \Vert S(x) - S(y)\Vert \leq \Psi(\Vert x - y\Vert), \quad \forall x, y \in \cH.
    \]
    To finish the proof, it remains to check the inequality $\Psi(t) < t$ for all $t > 0$. Since the function $\psi$ is strictly increasing, applying $\psi$ to both sides gives $(1 + \gamma \beta \tau)t < \psi(t)$. Substituting the definition of $\psi(t) = t + \gamma \frac{\phi(t)}{t}$, the condition $\Psi(t) < t$ is equivalent to $\phi(t) > \beta \tau t^2$. Consequently, $S$ satisfies all the requirements of the  Boyd-Wong fixed-point theorem (\cf \cref{thm:boyd-wong}) and thus admits a unique fixed point $\bar{x} \in \cH$.
\end{proof}

In the following result, we prove the existence and uniqueness of equilibria under \cref{assu:A:SM}, exploiting the structure of the sum $A+\Bm$ rather than that of the forward-backward operator $\Txb$. The proof follows the arguments developed in \cite[Theorem 3.5]{EFA}.

\begin{proposition}[Existence and uniqueness of the equilibrium]\label{prop:existence-equil-SM}
    Suppose that \cref{assu:A:SM,assu:1,assu:2} hold. Then, the single-valued mapping $S: \cH \to \cH$ defined by $S(x) = \zer(A + \Bx)$ is a strict contraction, and there exists a unique equilibrium point $\xbar \in \cH$ satisfying $0 \in A(\xbar) + \Bxb(\xbar)$.
\end{proposition}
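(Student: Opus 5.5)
Fix $y\in\cH$ and define $S(y)$ as the unique zero of $A+\By$. This map is well defined by the following argument. Since $A$ is maximally monotone and $\By$ is $\theta$-cocoercive (\cref{lem:Bm-cocoercive}), hence continuous and monotone, the sum $A+\By$ is maximally monotone. Strong monotonicity of $A$ with modulus $\mu_A$ makes $A+\By$ $\mu_A$-strongly monotone. A maximally monotone, strongly monotone operator is surjective and its inverse is single-valued and $\frac{1}{\mu_A}$-Lipschitz (see \eg \cite[Corollary 23.37 / Proposition 22.11]{Bauschke&Combettes}). Hence $\zer(A+\By)=(A+\By)^{-1}(0)$ is a singleton, and $S:\cH\to\cH$ is single-valued. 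By \cref{def:equilibrium}, equilibria are exactly the fixed points of $S$.

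The core step is the contraction estimate. Let $y,z\in\cH$, set $p=S(y)$ and $q=S(z)$, so that $-\By(p)\in A(p)$ and $-\Bz(q)\in A(q)$. Strong monotonicity of $A$ gives
\[
\mu_A\|p-q\|^2\le \langle \Bz(q)-\By(p),p-q\rangle .
\]
We split $\Bz(q)-\By(p)=\big(\Bz(q)-\Bz(p)\big)+\big(\Bz(p)-\By(p)\big)$. The first piece contributes $-\langle \Bz(p)-\Bz(q),p-q\rangle\le -\theta\|\Bz(p)-\Bz(q)\|^2\le 0$ by cocoercivity, so it can be dropped. For the second piece we use Cauchy--Schwarz together with \cref{lemm:1}\ref{it:lemm:1-1}, which states $\sup_x\|\Bz(x)-\By(x)\|\le\beta\tau\|y-z\|$. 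Combining these yields $\mu_A\|p-q\|^2\le\beta\tau\|y-z\|\,\|p-q\|$, and therefore
\[
\|S(y)-S(z)\|\le \frac{\beta\tau}{\mu_A}\|y-z\| = L_S\|y-z\|.
\]

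The remaining step is the contraction factor, and here lies the main obstacle. The statement of \cref{assu:A:SM} only names $L_S=\beta\tau/\mu_A$, so strictness requires $L_S<1$. This is the condition $\beta\tau/\mu_A<1$ listed in \cref{tab:summary_dynamics}, and it is also the quadratic case $\phi(t)=\mu_At^2>\beta\tau t^2$ of \cref{assu:A-UM}. I would read this as implicitly part of the assumption and state it explicitly in the proof. With $L_S<1$, the Banach fixed-point theorem on the complete space $\cH$ gives a unique fixed point $\xbar$ of $S$, that is, a unique $\xbar$ with $0\in A(\xbar)+\Bxb(\xbar)$. Uniqueness of the equilibrium follows because any equilibrium is a fixed point of $S$. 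Note that \cref{lemm:1}\ref{it:lemm:1-1} requires \cref{assu:W1}, so that assumption must also be invoked along with \cref{assu:1}.
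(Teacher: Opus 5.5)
Your proof is correct and follows the paper's argument. The steps match: strong monotonicity of $A$ at the two zeros, an add-and-subtract step in which the monotone piece is dropped (you insert $\Bz(p)$ where the paper inserts $\Bx(v)$, a symmetric choice), Cauchy--Schwarz with \cref{lemm:1}\ref{it:lemm:1-1}, and Banach's theorem. Your remarks are also accurate: $L_S<1$ must be read into \cref{assu:A:SM}, \cref{assu:W1} is needed for \cref{lemm:1}\ref{it:lemm:1-1} (the paper's proof uses both without listing them in the statement), and your argument that $\zer(A+\By)$ is a singleton, via maximal monotonicity of the sum, is more complete than the paper's.
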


\begin{proof}
    Let $x, y \in \cH$. Since $A$ is strongly monotone and $\Bx$ is monotone, the sum $A + \Bx$ is strongly monotone for any fixed measure. Thus, the zeros $u = S(x)$ and $v = S(y)$ are uniquely defined and we have:
    \[
        0 \in A(u) + \Bx(u) \quad \text{and} \quad 0 \in A(v) + \By(v).
    \]
    This implies there exist seclections $p \in A(u)$ and $q \in A(v)$ such that $p = -\Bx(u)$ and $q = -\By(v)$.
    Using the $\mu_A$-strong monotonicity of $A$, we have:
    \begin{equation}\label{eq:proof_eq_1}
        \mu_A \|u - v\|^2 \leq \langle p - q, u - v \rangle = \langle -\Bx(u) + \By(v), u - v \rangle.
    \end{equation}
    To handle the inner product on the right-hand side, we add and subtract $\Bx(v)$:
    \[
        \langle -\Bx(u) + \By(v), u - v \rangle = \langle -\left(\Bx(u) - \Bx(v)\right), u - v \rangle + \langle \By(v) - \Bx(v), u - v \rangle.
    \]
    Since the operator $\Bx$ is monotone, the first term is non-positive:
    \[
        \langle -\left(\Bx(u) - \Bx(v)\right), u - v \rangle \leq 0.
    \]
    Therefore, inequality \eqref{eq:proof_eq_1} becomes:
    \[
        \mu_A \Vert u - v\Vert^2 \leq \langle \By(v) - \Bx(v), u - v \rangle,
    \]
    and applying Cauchy-Schwarz inequality, we obtain:
    \[
        \mu_A \|u - v\|^2 \leq \Vert \By(v) - \Bx(v)\Vert \Vert u - v\Vert,
    \]
    that is 
    \[
        \mu_A \Vert u - v\Vert \leq \Vert \By(v) - \Bx(v)\Vert.
    \]
    Now, using \cref{lemm:1}\ref{it:lemm:1-1}, we have $\Vert \By(v) - \Bx(v)\Vert \leq \beta \tau \Vert y - x\Vert.
    $
    Combining these inequalities yields:
    \[
        \|S(x) - S(y)\| = \|u - v\| \leq \frac{\beta \tau}{\mu_A} \Vert x - y\Vert.
    \]
    Consequently, the mapping $S$ is a strict contraction with constant $L_S = \frac{\beta \tau}{\mu_A} < 1$. By the Banach Fixed-Point Theorem (\cref{thm:fixedpoint}), $S$ admits a unique fixed point $\bar{x} \in \cH$, which corresponds exactly to the unique equilibrium.
\end{proof}
\subsection{Well-posedness of \eqref{eq:split_din_avd}}\label{subsection:well-posedness}   
Now we are in a position to address well-posedness of the dynamics. As emphasized in \cite{EFA}, the dependence of the operator $\opT{\bar x}{\lambda}{\gamma}$ on the trajectory $x(t)$ makes a direct analysis of \eqref{eq:split_din_avd} delicate. Leveraging the notion of equilibrium introduced above, one can recast the dynamics as a system driven by the operator $\opT{\bar x}{\lambda}{\gamma}$ associated with the fixed equilibrium measure $\mm_{\xbar}$, subject to an additional perturbation term. 

To derive a suitable reformulation of \eqref{eq:split_din_avd}, we introduce the gap function $\Exb: \cH \to \cH$ defined by:
\begin{equation}\label{eq:gapE}
	\Exb(x) \coloneqq \opT{x}{\lambda}{\gamma}(x) - \opT{\bar x}{\lambda}{\gamma}(x).
\end{equation}
This term captures the discrepancy between the operator evaluated at the current distribution $\mm_x$ and the one evaluated at the equilibrium distribution $\mm_{\bar{x}}$. 
Substituting this decomposition into \eqref{eq:split_din_avd}, the system takes the following perturbed form:
\begin{equation}\tag{s-DIN-AVD}\label{eq:sol}
	\ddot{x}(t) + \nu(t)\dot{x}(t) + \underbrace{\opT{\bar x}{\lambda(t)}{\gamma(t)}(x(t)) + \omega \frac{\dd}{\dd t}\left(\opT{\bar x}{\lambda(t)}{\gamma(t)}(x(t))\right)}_{\text{Static Dynamics w.r.t $\mm_x$}} + \underbrace{\Exb(x(t)) + \omega \frac{\dd}{\dd t}\left( \Exb(x(t)) \right)}_{\text{Perturbation}} = 0.
\end{equation}
\begin{remark}\label{rem:gap_smooth}
	Consider the smooth case where $A=0$ and $B(x,\xi)=\nabla_x f(x,\xi)$ for some $f\in C^{1}(\cH\times\Xi)$. For any probability measure $\m$, let us define
	\[
		G_{\m}(x) \coloneqq \Ex_{\xi\sim\m}[f(x,\xi)], \qquad 
		\Bm(x) = \Ex_{\xi\sim\m}[\nabla_x f(x,\xi)] = \nabla G_{\m}(x).
	\]
	Since $J_{\gamma A}=\Id$ when $A=0$, the forward-backward operator simplifies to
	\[
		\T_{\lambda, \gamma}^{\m}(x)
		= \frac{1}{\lambda}\bigl(x - (x-\gamma \Bm(x))\bigr)
		= \frac{\gamma}{\lambda}\,\Bm(x)
		= \frac{\gamma}{\lambda}\,\nabla G_{\m}(x).
	\]
	Consequently, the gap function $\Exb(x) = \T_{\lambda, \gamma}^{\mm_{x}}(x) - \T_{\lambda, \gamma}^{\mm_{\xbar}}(x)$ can be explicitly written as
	\[
		\Exb(x) = \frac{\gamma}{\lambda}\bigl(\nabla G_{\m_x}(x) - \nabla G_{\m_{\bar{x}}}(x)\bigr).
	\]
	In particular, when $\gamma=\lambda$, the operator $\T_{\lambda, \gamma}^{\m}$ coincides with the expected gradient $\nabla G_{\m}$. In this setting, the gap $\Exb(x)$ represents exactly the difference between the gradient field under thecccurrent distribution and the gradient field under thecxequilibrium distribution.
\end{remark}

Building upon the estimates established in the previous lemmas, we are now in a position to prove the existence and uniqueness of a global solution to \eqref{eq:sol}. The proof follows standard arguments based on the Cauchy-Lipschitz theorem (see, \eg \cite[Proposition 6.2.1]{Haraux}).
\begin{theorem}\label{thm:1}
    Suppose that \cref{assu:1,assu:W1,assu:Bm} are satisfied. Let $\gamma \in (0, 2\theta)$ and assume that the parameter $\lambda: [t_0, \infty) \to (0, \infty)$ is bounded away from zero, \ie $\inf_{t \geq t_0} \lambda(t) > 0$. Furthermore, suppose that the operator $A$ satisfies either the uniform monotonicity \cref{assu:A-UM} or the strong monotonicity \cref{assu:A:SM} condition. 
    Then, for any initial conditions $(x_0, u_0) \in \cH \times \cH$, the dynamical system \eqref{eq:sol} admits a unique strong global solution $x: [t_0, \infty) \to \cH$ satisfying $x(t_0) = x_0$ and $\dot{x}(t_0) = u_0$.
\end{theorem}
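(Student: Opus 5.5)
The plan is to recast \eqref{eq:sol} (which is just \eqref{eq:split_din_avd} with the perturbation separated out) as a first-order system in $\cH\times\cH$ with a globally Lipschitz right-hand side, and then invoke the Cauchy--Lipschitz theorem. As in \cite{Attouch&Laszlo2,boct2024second}, the first step is to remove the second derivative of the nonsmooth operator. We write $T(t,x):=\opT{x}{\lambda(t)}{\gamma}(x)$, which equals $\opT{\bar x}{\lambda(t)}{\gamma}(x)+\Exbar{\lambda(t)}{\gamma}(x)$, and introduce the auxiliary variable
\[
y(t):=\dot x(t)+\omega\, T(t,x(t)).
\]
Here $\omega>0$; the case $\omega=0$ is the simpler standard one. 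Differentiating gives $\dot y=\ddot x+\omega\frac{\dd}{\dd t}T(t,x)$, so \eqref{eq:sol} becomes $\dot y=-\nu(t)\dot x-T(t,x)$. Substituting $\dot x=y-\omega T(t,x)$ yields the system
\[
\dot x = y-\omega T(t,x),\qquad \dot y=-\nu(t)\bigl(y-\omega T(t,x)\bigr)-T(t,x),
\]
with $x(t_0)=x_0$ and $y(t_0)=u_0+\omega T(t_0,x_0)$. Conversely, every $C^1$ solution of this system gives a strong solution of \eqref{eq:sol}. The relation $\dot x=y-\omega T$ shows that $\dot x$ is absolutely continuous whenever $t\mapsto T(t,x(t))$ is, so $\ddot x$ exists almost everywhere and the equivalence holds. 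This system involves no derivative of $T$.

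The key estimate is a Lipschitz bound on $x\mapsto T(t,x)$ that is uniform in $t$. We will not use the decomposition through $\bar x$ for this step, since it is only a notational device. Instead we estimate directly:
\[
\lambda(t)\|T(t,x)-T(t,z)\|\le \|x-z\|+\|J_{\gamma A}(x-\gamma\Bx(x))-J_{\gamma A}(z-\gamma\Bz(z))\|.
\]
The second term is at most $(1+\gamma L+\gamma\beta\tau)\|x-z\|$, by the nonexpansiveness of $J_{\gamma A}$, \cref{assu:Bm}, and \cref{lemm:1}\ref{it:lemm:1-1}. Alternatively, \cref{lemm:2}\ref{it:lemm2-1} together with \cref{lemm:T}\ref{it:lemT_2} (with $\lambda_1=\lambda_2$, $\gamma_1=\gamma_2$) gives a constant $(4+c_0)/\lambda(t)$. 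Since $\inf_t\lambda(t)>0$, we obtain a constant $K$ independent of $t$. Both \cref{assu:A-UM} and \cref{assu:A:SM} guarantee that $\bar x$ exists (\cref{prop:existence-equil-UM,prop:existence-equil-SM}), so the reformulation \eqref{eq:sol} makes sense. We also get the linear growth bound $\|T(t,x)\|\le K\|x-\bar x\|$ from $T(t,\bar x)=0$.

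The map $F(t,x,y)$ is therefore globally Lipschitz in $(x,y)$ with constant of order $1+\omega K+\nu(t)(1+\omega K)+K$. We assume $\nu$ and $\lambda$ are continuous, hence locally bounded, which the dynamics implicitly require. Under this assumption the Lipschitz constant is locally integrable in $t$, and $F$ is measurable in $t$ because it is continuous through $\lambda(t)$. The Cauchy--Lipschitz--Picard theorem for nonautonomous ODEs with $L^1_{\mathrm{loc}}$ Lipschitz constant (\cite[Proposition 6.2.1]{Haraux}) then yields a unique global absolutely continuous solution $(x,y)$ on $[t_0,\infty)$. Global Lipschitz continuity excludes blow-up, so no extra a priori bound is needed. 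Uniqueness for \eqref{eq:sol} follows from the equivalence of the two formulations.

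The step I expect to be the main obstacle is this equivalence, specifically the regularity of $t\mapsto T(t,x(t))$ that makes $\ddot x$ meaningful. This needs $\lambda$ to be locally absolutely continuous. I plan to handle it with \cref{lemm:2}\ref{it:lemm2-2} and \cref{lemm:T}\ref{it:lemT_3}, which bound $\frac{\dd}{\dd t}T$ by $\|\dot x\|$ and $|\dot\lambda|\,\|x-\bar x\|$. This bound shows the composite is locally Lipschitz, hence absolutely continuous, along the solution.
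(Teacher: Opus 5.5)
Your argument is correct and has the same skeleton as the paper's proof: a first-order reformulation, Lipschitz bounds, and the non-autonomous Cauchy--Lipschitz theorem \cite[Proposition 6.2.1]{Haraux}. The difference is the choice of auxiliary variable.

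The paper adopts the Attouch--L\'aszl\'o system \eqref{eq:1st-order-system}. There $y$ is chosen so that the $y$-equation is linear and operator-free, and the operator enters only through $\dot x+\omega(\Txb+\Exb)(x)+\cdots=0$. This is the form used throughout the DIN literature, since it is the one that survives when the operator is not Lipschitz. However, eliminating $\dot x$ from the second equation produces $\dot\nu$. This is why the paper must assume $\nu\in C^1$, and why its Lipschitz modulus $K(t)$ involves $|\nu(t)|+|\dot\nu(t)|$.

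Your choice $y=\dot x+\omega\,\opTx{\lambda(t)}{\gamma}(x)$ keeps the operator in both equations. That costs nothing here, because $x\mapsto\opTx{\lambda(t)}{\gamma}(x)$ is globally Lipschitz with modulus $(2+\gamma L+\gamma\beta\tau)/\inf_{t}\lambda(t)$. In exchange, $\nu$ only needs to be continuous; local integrability would suffice.

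Your direct estimate uses only \cref{assu:Bm} and \cref{lemm:1}\ref{it:lemm:1-1}, not the cocoercivity of \cref{lemm:T}. Moreover, a field that is Lipschitz in $(x,y)$ with $t\mapsto F(t,0,0)$ locally bounded automatically has linear growth. So $\bar x$, and hence \cref{assu:A-UM,assu:A:SM}, is needed only to write the equation in the split form \eqref{eq:sol}, not to solve it.

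Finally, you make explicit a point the paper passes over when it asserts $x\in W^{2,1}_{\mathrm{loc}}$. Recovering $\ddot x$ and $\frac{\dd}{\dd t}\opTx{\lambda(t)}{\gamma}(x(t))$ almost everywhere from either first-order system requires $\lambda$ to be locally absolutely continuous. Write $\opTx{\lambda(t)}{\gamma}(x(t))=\lambda(t)^{-1}R(x(t))$ with $R(x):=x-J_{\gamma A}(x-\gamma\Bx(x))$, which is Lipschitz, and use $x\in C^1$. This settles the regularity more cleanly than invoking \cref{lemm:2}\ref{it:lemm2-2} and \cref{lemm:T}\ref{it:lemT_3}, which are stated for differentiable $\lambda$.
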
	
\begin{proof}
    Before detailing the proof, we assume that the viscous damping function $\nu: [t_0, \infty) \to (0, \infty)$ is continuously differentiable. This guarantees that both $\nu(t)$ and $\dot{\nu}(t)$ are continuous and bounded on any compact interval of $[t_0, \infty)$.

    \begin{enumerate}[label=\textbf{Case \arabic*.}, wide, labelindent=0pt]
        \item \textbf{$\omega > 0$.} 
        As established in \cite{Attouch&Laszlo2, boct2024second} (see also \cite[Proposition 4.3]{EFA}), for all $t \geq t_0$, the second-order system \eqref{eq:sol} is equivalent to the following first-order dynamical system:
        \begin{equation}\tag{S}\label{eq:1st-order-system}
            \left\{\begin{aligned}
                &\dot{x}(t) + \omega \Big( \opT{\bar x}{\lambda(t)}{\gamma(t)} (x(t)) + \Exbar{\lambda(t)}{\gamma(t)}(x(t)) \Big) - \left(\frac{1}{\omega}-\nu(t)\right) x(t) + \frac{1}{\omega} y(t) = 0, \\
                &\dot{y}(t) - \left(\frac{1}{\omega}-\nu(t) - \omega\dot{\nu}(t)\right) x(t) + \frac{1}{\omega} y(t) = 0.
            \end{aligned}\right.
        \end{equation}
        Consequently, solving \eqref{eq:sol} with Cauchy data $(x(t_0), \dot{x}(t_0)) = (x_0, u_0)$ is equivalent to solving the first-order problem:
        \begin{equation*}
            \left\{\begin{aligned}
                \dot{Z}(t) &= \E(t, Z(t)), \\
                Z(t_0) &= (x_0, y_0),
            \end{aligned}\right.
        \end{equation*}
        where $Z(t) = (x(t), y(t))$ and the vector field $\E$ is defined by
        \[
            \E(t, (x, y)) = 
            \begin{pmatrix}
                -\omega \left( \opT{\bar x}{\lambda(t)}{\gamma(t)} (x) + \Exbar{\lambda(t)}{\gamma(t)}(x) \right) + \left(\frac{1}{\omega}-\nu(t)\right) x - \frac{1}{\omega} y \\[8pt]
                \left(\frac{1}{\omega}-\nu(t) - \omega\dot{\nu}(t)\right) x - \frac{1}{\omega} y
            \end{pmatrix}.
        \]
        The initial condition for the auxiliary variable is given by:
        \[
            y_0 = -\omega \left( u_0 + \omega \opT{\bar x}{\lambda(t_0)}{\gamma(t_0)}(x_0) \right) + \left(1 - \omega\nu(t_0)\right)x_0 - \omega^2 \Exbar{\lambda(t_0)}{\gamma(t_0)}(x_0).
        \]

        We start by verifying the Lipschitz continuity of the vector field $\E$ with respect to the state variable $Z$. We endow the product space $\cH \times \cH$ with the standard norm $\|(x,y)\| = \|x\| + \|y\|$.
        Let $t \in [t_0, \infty)$ be fixed, and let $z = (x, y)$ and $s = (u, v)$ be two points in $\cH \times \cH$. For brevity, let us denote the time-dependent coefficients by $p(t) \coloneqq \left(\frac{1}{\omega} - \nu(t)\right)$ and $q(t) \coloneqq \left(\frac{1}{\omega} - \nu(t) - \omega\dot{\nu}(t)\right)$. 

        Using the definition of $\E$ and the triangle inequality on each component, we have:
        \begin{align*}
            \Vert\E(t,z) - \E(t,s)\Vert 
            &\leq \omega\left\Vert \opT{\bar x}{\lambda(t)}{\gamma(t)}(x) - \opT{\bar x}{\lambda(t)}{\gamma(t)}(u) \right\Vert \\
            &\quad + \omega\left\Vert \Exbar{\lambda(t)}{\gamma(t)}(x) -\Exbar{\lambda(t)}{\gamma(t)}(u) \right\Vert \\
            &\quad + \big(\vert p(t)\vert + \vert q(t)\vert\big)\Vert x-u\Vert + \frac{2}{\omega}\Vert y-v\Vert.
        \end{align*}
        By \cref{lemm:T}\ref{it:lemT_1}, the operator $\opT{\bar x}{\lambda(t)}{\gamma(t)}$ is $\frac{\lambda(t)}{2}$-cocoercive, which implies it is $\frac{2}{\lambda(t)}$-Lipschitz continuous. Similarly, it follows from \cref{lemm:2}\ref{it:lemm2-1} that the gap function satisfies:
        \[
            \left\Vert\Exbar{\lambda(t)}{\gamma(t)}(x) -\Exbar{\lambda(t)}{\gamma(t)}(u) \right\Vert \leq \frac{1}{\lambda(t)}(2+ 2\gamma L + \gamma \beta \tau)\Vert x-u\Vert.
        \]
        Let $\tilde{\lambda} \coloneqq \inf_{t\geq t_0}\lambda(t) > 0$. Injecting these estimates into the main inequality, we obtain:
        \[
            \Vert\E(t,z) - \E(t,s)\Vert \leq \left( \frac{\omega}{\tilde{\lambda}}(4+ 2\gamma L + \gamma \beta \tau) + \vert p(t)\vert + \vert q(t)\vert \right) \Vert x-u\Vert + \frac{2}{\omega}\Vert y-v\Vert.
        \]
        We define the time-dependent Lipschitz constant $K(t)$ as:
        \[
            K(t) \coloneqq \max\left\{ \frac{\omega}{\tilde{\lambda}}(4+ 2\gamma L + \gamma \beta \tau) + \vert p(t)\vert + \vert q(t)\vert, \; \frac{2}{\omega} \right\}.
        \]
        Observing that $t \mapsto p(t)$ and $t \mapsto q(t)$ are continuous on $[t_0, \infty)$, we have $K \in L_{\mathrm{loc}}^\infty([t_0,\infty), \mathbb{R})$. Thus, we conclude that:
        \[
            \Vert\E(t,z) - \E(t,s)\Vert \leq K(t)\left(\Vert x-u\Vert + \Vert y-v\Vert\right) = K(t)\Vert z-s\Vert.
        \]

        To establish the global existence of solutions, we must ensure that the vector field grows at most linearly. For any $t \in [t_0, \infty)$ and $z = (x,y)$, evaluating $\E$ at $(x,y)$ and comparing it to the equilibrium point $\bar{x}$ yields:
        \[
            \Vert\E(t,z)\Vert \leq \omega \left\Vert \opT{\bar x}{\lambda(t)}{\gamma(t)} (x) + \Exbar{\lambda(t)}{\gamma(t)}(x) \right\Vert + (\vert p(t)\vert + \vert q(t)\vert)\Vert x\Vert + \frac{2}{\omega}\Vert y\Vert.
        \]
        Recall that $\opT{\bar x}{\lambda(t)}{\gamma(t)}(\bar{x}) = 0$ by definition of the equilibrium, and $\Exbar{\lambda(t)}{\gamma(t)}(\bar{x}) = 0$. Using the Lipschitz bounds established previously, we estimate the operator term:
        \begin{align*}
            \left\Vert \opT{\bar x}{\lambda(t)}{\gamma(t)} (x) + \Exbar{\lambda(t)}{\gamma(t)}(x) \right\Vert
            &\leq \left\Vert \opT{\bar x}{\lambda(t)}{\gamma(t)}(x) - \opT{\bar x}{\lambda(t)}{\gamma(t)}(\bar{x})\right\Vert + \left\Vert\Exbar{\lambda(t)}{\gamma(t)}(x) - \Exbar{\lambda(t)}{\gamma(t)}(\bar{x})\right\Vert \\
            &\leq \left( \frac{2}{\lambda(t)} + \frac{\gamma\beta\tau}{\lambda(t)} \right) \Vert x-\bar{x}\Vert\\
            &\leq \frac{2+\gamma \beta\tau}{\tilde{\lambda}} (\Vert x\Vert + \Vert\bar{x}\Vert).
        \end{align*}
        Injecting this back into the estimate for $\|\E(t,z)\|$ and defining:
        \[
            P(t) \coloneqq \max\left\{ \frac{\omega(2+\gamma \beta\tau)}{\tilde{\lambda}} + \vert p(t)\vert + \vert q(t)\vert, \frac{2}{\omega},\frac{\omega(2+\gamma \beta\tau)}{\tilde{\lambda}}\Vert\bar{x}\Vert \right\},
        \]
        we obtain the sub-linear growth condition:
        \[
            \Vert\E(t,z)\Vert \leq P(t)(1+\Vert z \Vert).
        \]
        Since $p$ and $q$ are continuous bounded functions on bounded intervals, $P \in L_{\mathrm{loc}}^1([t_0,\infty), \mathbb{R})$. Consequently, the conditions of the non-autonomous Cauchy-Lipschitz theorem (see \eg \cite[Proposition 6.2.1]{Haraux}) are satisfied. We deduce the existence of a unique strong global solution $t \mapsto x(t)$ defined on $[t_0, \infty)$. More precisely, the pair $(x, y)$ is locally absolutely continuous on $[t_0, \infty)$, meaning that $x \in W^{2,1}_{\mathrm{loc}}([t_0, \infty); \cH)$. Thus, the second-order dynamics \eqref{eq:sol} holds for almost every $t \geq t_0$.

        \item \textbf{$\omega = 0$.} 
        By setting $y(t) = \dot{x}(t)$, \eqref{eq:sol} is straightforwardly equivalent to the first-order system:
        \begin{equation*}
        \left\{\begin{aligned}
            \dot{Z}(t) &= \E(t, Z(t)), \\
            Z(t_0) &= (x_0, y_0),
        \end{aligned}\right.
        \end{equation*}
        where $Z(t) = (x(t), y(t))$ and the vector field is defined for any $(x,y) \in \cH \times \cH$ by:
        \[
            \E(t,(x, y)) = 
            \begin{pmatrix}
                y \\
                -\nu(t) y - \left( \opT{\bar x}{\lambda(t)}{\gamma(t)} (x) + \Exbar{\lambda(t)}{\gamma(t)}(x) \right)
            \end{pmatrix}.
        \]
        Reasoning exactly as in the previous case, we easily verify that $\E$ satisfies the conditions of the non-autonomous Cauchy-Lipschitz theorem (Lipschitz continuity in space, local integrability in time). This guarantees the global existence and uniqueness of the solution.
    \end{enumerate}
    This concludes the proof.
\end{proof}
\section{Asymptotic Convergence Analysis under Uniform Monotonicity}\label{section:cv-um}
In this section, we analyze the long-term behavior of the trajectories. Throughout, we assume that $\omega \geq 0$ and $\nu(t) = \alpha/t$ with $\alpha > 1$. Recall that, based on the reformulation established earlier, the evolution system \eqref{eq:sol} is given by:
\begin{equation}\tag{s-DIN-AVD}\label{eq:sole}
    \ddot{x}(t) + \frac{\alpha}{t} \dot{x}(t) + \opT{\bar x}{\lambda(t)}{\gamma(t)}(x(t)) + \omega \frac{\dd}{\dd t}\left(\opT{\bar x}{\lambda(t)}{\gamma(t)}(x(t))\right)
    +\Exb(x(t)) + \omega \frac{\dd}{\dd t}\left( \Exb(x(t)) \right) = 0,
\end{equation}
for $t > t_0 > 0$.

The main convergence properties of the trajectories generated by \eqref{eq:sole} as $t \to \infty$ are summarized in \cref{thm:convergence}. Before stating the main result, we introduce the following auxiliary lemma which will be instrumental in the energy analysis.

\begin{lemma}\label{lemm:3}
Let $h(t)$ be a differentiable function defined for $t \geq t_0 > 0$ such that:
\begin{enumerate}[(i)]
    \item\label{lemma3_it:1} $ \left\| 2t^2 h(t) + t^3 h'(t) \right\| = o\left(\frac1t\right),\ \mathrm{as}\ t\to \infty$;
    \item\label{lemma3_it:2} $\left\| h(t) \right\| = \mathcal{O}\left( \frac{1}{t^3} \right)$;
    \item\label{lemma3_it:3} $\left\| h'(t) \right\| = \mathcal{O}\left( \frac{1}{t^4} \right)$.
\end{enumerate}
Then it follows that:
\[
\left\| h(t) \right\| = o\left( \frac{1}{t^3} \right).
\]
\end{lemma}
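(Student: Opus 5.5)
The plan is to recognize the expression in \ref{lemma3_it:1} as a scaled exact derivative and then integrate from $t$ to $\infty$. For $t\geq t_0$, the product rule gives
\[
\frac{\dd}{\dd t}\bigl(t^2 h(t)\bigr) = 2t\,h(t) + t^2 h'(t),
\qquad\text{hence}\qquad
2t^2 h(t) + t^3 h'(t) = t\,\frac{\dd}{\dd t}\bigl(t^2 h(t)\bigr).
\]
Set $g(t) \coloneqq t^2 h(t)$. Dividing \ref{lemma3_it:1} by $t$ shows that $\|g'(t)\| = o(1/t^2)$ as $t\to\infty$. I will use only this estimate and the $\mathcal{O}$-bound \ref{lemma3_it:2}; hypothesis \ref{lemma3_it:3} appears unnecessary for this route, though it guarantees that $g'$ is bounded near infinity.

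\textbf{Step 1 (a limit exists).} Since $\|g'(s)\|\leq C/s^2$ for large $s$, the function $g'$ is Bochner integrable on $[T,\infty)$ for $T$ large. For such $t<T$ we have $g(T)-g(t)=\int_t^T g'(s)\,\dd s$, where $g'$ is continuous or at least locally integrable; if needed, this is justified by the absolute continuity of $h$ along the trajectories to which the lemma is applied. Letting $T\to\infty$ shows that $g(T)$ is Cauchy, so $g(t)\to \ell$ for some $\ell\in\cH$.

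\textbf{Step 2 (the limit vanishes).} By \ref{lemma3_it:2}, $\|g(t)\| = t^2\|h(t)\| = \mathcal{O}(1/t)\to 0$, so $\ell=0$.

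\textbf{Step 3 (quantitative tail).} Fix $\eps>0$ and choose $T_\eps$ such that $\|g'(s)\|\leq \eps/s^2$ for all $s\geq T_\eps$. Then for every $t\geq T_\eps$,
\[
\|g(t)\| = \Bigl\|\int_t^\infty g'(s)\,\dd s\Bigr\| \leq \int_t^\infty \frac{\eps}{s^2}\,\dd s = \frac{\eps}{t}.
\]
Hence $t^2\|h(t)\| = o(1/t)$, that is, $\|h(t)\| = o(1/t^3)$, which is the claim.

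The only delicate point is the regularity needed for the fundamental theorem of calculus in a Hilbert space, namely that $t\mapsto t^2h(t)$ is absolutely continuous with integrable derivative on $[t_0,\infty)$. Since $h$ is assumed differentiable with derivative controlled by \ref{lemma3_it:3}, this should be routine. Alternatively, one can avoid vector-valued integration by applying the mean value inequality $\|g(t)-g(T)\|\leq (T-t)\sup_{[t,T]}\|g'\|$ on dyadic blocks $[2^k t, 2^{k+1}t]$ and summing the resulting geometric series.
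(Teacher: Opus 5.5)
Your proof is correct and follows essentially the paper's route: write $2t^2h+t^3h' = t\,\frac{\dd}{\dd t}\bigl(t^2h\bigr)$, integrate the $o(1/s^2)$ derivative over a tail, and use hypothesis~(ii) to force the limiting constant to vanish, with hypothesis~(iii) playing no real role. Your formulation through $\ell=\lim_{t\to\infty}t^2h(t)=0$ is cleaner than the paper's version. The paper integrates from $t_0$ and then conflates the constant $C$ with $I+C$ when concluding; the correct conclusion is $I+C=0$, i.e.\ $t^2h(t)=-\int_t^\infty \frac{r(s)}{s}\,\dd s$, which is exactly your Step~3.
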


\begin{proof}
Define, for $t>t_0$,
\[
r(t) := 2t^2 h(t) + t^3 h'(t).
\]
By assumption \ref{lemma3_it:1}, we have $\vert r(t)\vert = o\left( \frac{1}{t} \right).$
Dividing both sides of the definition of \( r(t) \) by \( t^3\neq 0 \), we obtain the first-order linear differential equation:
\begin{equation}\label{eq:lemma_ode1}
h'(t) + \frac{2}{t} h(t) = \frac{r(t)}{t^3}.
\end{equation}
We solve it via an integrating factor. Multiplying \eqref{eq:lemma_ode1} by $t^2$, we get
\[
t^2 h'(t) + 2t h(t) = \frac{r(t)}{t},
\]
which is equivalent to:
\[
\frac{d}{dt} \left( t^2 h(t) \right) = \frac{r(t)}{t}.
\]
Integrating from \( t_0 \) to \( t \), we have:
\[
t^2 h(t) = \int_{t_0}^t \frac{r(s)}{s} ds + C,
\]
for some constant \( C \). Therefore,
\[
h(t) = \frac{1}{t^2} \int_{t_0}^t \frac{r(s)}{s} ds + \frac{C}{t^2}.
\]
Now we analyze the asymptotic behavior of \( h(t) \). Since \( \|r(s)\| = o(1/s) \), it follows that:
\[
\left\| \frac{r(s)}{s} \right\| = o\left( \frac{1}{s^2} \right),
\]
and thus $\int_{t_0}^{\infty} \frac{r(s)}{s} \dd s := I < \infty$. Hence, $\int_{t_0}^{t} \frac{r(s)}{s} \dd s = I - \int_{t}^{\infty} \frac{r(s)}{s} \dd s$. Since $\Vert r(s)/s\Vert = o(1/s^2)$ we have 
\[
\Big\Vert \int_{t}^{\infty} \frac{r(s)}{s} \dd s\Big\Vert \leq \int_{t}^{\infty}\Big\Vert \frac{r(s)}{s} \Big\Vert \dd s = o(1/t).
\]
Consequently $h(t) = \frac{1}{t^2} (I - o(1/t) + C) = C/t^2 + o(1/t^3)$. Now, suppose that $C \neq 0$. Then
\[
\left\| h(t) \right\| \geq \left\| \frac{C}{t^2} \right\| - o\left( \frac{1}{t^3} \right) = \frac{\|C\|}{t^2} - o\left( \frac{1}{t^3} \right).
\]
Thus,
\[
t^3 \|h(t)\| \geq \|C\| t - o(1) \to \infty \quad \text{as } t \to \infty,
\]
which contradicts \( \|h(t)\| = \mathcal{O}(1/t^3) \). Hence \( C = 0 \), and we obtain:
\[
h(t) = \frac{1}{t^2} \int_{t_0}^t \frac{r(s)}{s} ds = o\left( \frac{1}{t^3} \right).
\]
This completes the proof.
\end{proof}

The main result of this section is the following theorem.

\begin{theorem}\label{thm:convergence}
    Suppose that \cref{assu:1,assu:2,assu:W1,assu:Bm,assu:A-UM} hold, and let $\bar{x}$ denote the unique equilibrium point. Consider the second-order dynamics \eqref{eq:sole} with parameters satisfying the following conditions:
    \begin{equation}\label{eq:parameters-cv}
        \alpha \geq 3, \quad \omega \geq 0, \quad \gamma(t) \equiv \gamma \in (0, 2\theta), \quad \text{and} \quad \lambda(t) = \lambda t^3 \quad \text{with} \quad \lambda > \frac{4(1+ \gamma \beta \tau)^2}{\alpha}.
    \end{equation}
    Then, for any trajectory $x: [t_0,\infty)\to \mathcal{H}$ solution to \eqref{eq:sole}, the following properties hold:
    \begin{enumerate}[(i)]
        \item\label{it:mr-convergence} \textit{(Strong convergence and implicit rates).} The trajectory $x(t)$ is bounded and converges strongly as $t\to \infty$, to the unique equilibrium point $\bar{x}$. Moreover, defining the map $\Psi_A(s) \coloneqq s(s-\psi^{-1}(s))$ inherited from the uniform monotonicity, we have:
        \[
            \Psi_A(\Vert x(t) - \bar{x}\Vert) = o\left( 1 \right).
        \]
        
        \item\label{it:mr-integral} \textit{(Integral estimates).} The velocity and acceleration satisfy:
        \[
            \int_{t_0}^{\infty} t\|\dot{x}(t)\|^2 \dd t < \infty \quad \text{and} \quad \int_{t_0}^{\infty} t^3 \|\ddot{x}(t)\|^2 \dd t < \infty.
        \]
        
        \item\label{it:mr-ptw-est} \textit{(Pointwise estimates).} As $t\to \infty$, we have:
        \[
            \|\dot{x}(t)\| = o\left(\frac1t\right), \quad \|\ddot{x}(t)\| = \mathcal{O}\left(\frac{1}{t^2}\right).
        \]
        Furthermore, the operator and the gap function satisfy:
        \[
            \|\opT{\xbar}{\lambda(t)}{\gamma}(x(t))\| = o\left(\frac{1}{t^3}\right), \quad \|\Exbar{\lambda(t)}{\gamma}(x(t))\| = \mathcal{O}\left(\frac{1}{t^3}\right),
        \]
        and their time derivatives satisfy:
        \[
            \left\|\frac{\dd}{\dd t}\opT{\xbar}{\lambda(t)}{\gamma}(x(t))\right\| = \mathcal{O}\left(\frac{1}{t^4}\right), \quad \left\|\frac{\dd}{\dd t} \Exbar{\lambda(t)}{\gamma}(x(t))\right\| = \mathcal{O}\left(\frac{1}{t^4}\right).
        \]
    \end{enumerate}
\end{theorem}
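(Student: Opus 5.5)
The plan follows the Lyapunov approach of \cite{Attouch&Laszlo2,boct2024second}, adapted to the perturbed system \eqref{eq:sole}. Write $T(t):=\opT{\xbar}{\lambda(t)}{\gamma}(x(t))$ and $E(t):=\Exbar{\lambda(t)}{\gamma}(x(t))$. The key observation is that $\lambda(t)=\lambda t^3$ makes $t^2\lambda(t)$-weighted terms natural: by \cref{lemm:T}\ref{it:lemT_1}, $T$ is $\frac{\lambda t^3}{2}$-cocoercive. By \cref{lemm:1}\ref{it:lemm:1-3}, $\langle E,x-\xbar\rangle \geq \frac{\lambda t^3}{2}\|E\|^2-\frac{(1+\gamma\beta\tau)^2}{2\lambda t^3}\|x-\xbar\|^2$.

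\textbf{Energy and its derivative.} For $b\in(0,\alpha-1]$, I will use the energy
\[
\mathcal{E}_b(t)=\tfrac12\big\|b(x(t)-\xbar)+t\big(\dot x(t)+\omega (T(t)+E(t))\big)\big\|^2+\tfrac{b(\alpha-1-b)}{2}\|x(t)-\xbar\|^2+ (\text{correction terms } \propto \omega t\langle T+E,x-\xbar\rangle).
\]
Differentiating along \eqref{eq:sole} gives $\dot{\mathcal E}_b\le -bt\langle T+E,x-\xbar\rangle-(\alpha-1-b)t\|\dot x\|^2-\omega t^2\langle T+E,\dot x\rangle+\dots$. I will then lower-bound the coupling $\langle T+E,x-\xbar\rangle$ using cocoercivity of $T$ together with the estimate on $E$ above. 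This yields a term $-c\,t^4\|T+E\|^2$ plus an error term $\frac{(1+\gamma\beta\tau)^2}{2\lambda t^2}\|x-\xbar\|^2$. The condition $\lambda>4(1+\gamma\beta\tau)^2/\alpha$ is exactly what lets this error be absorbed by the $\frac{b(\alpha-1-b)}{2}\|x-\xbar\|^2$ term, possibly via a Grönwall step with the integrable factor $t^{-3}$. The derivative terms $\frac{\dd}{\dd t}E$ are controlled by \cref{lemm:2}\ref{it:lemm2-2}, with $|\dot\lambda|/\lambda^2 = 3/(\lambda t^4)$ and $c_0/\lambda(t)=O(t^{-3})$; these enter only with integrable weights once $x$ is bounded.

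\textbf{Consequences.} Boundedness of $\mathcal E_b$ gives $x$ bounded and $\|\dot x\|=O(1/t)$. It also gives the integrals $\int t\|\dot x\|^2<\infty$ and $\int t^4\|T+E\|^2\,\dd t<\infty$. The latter controls $\int t^3\|\ddot x\|^2$ through the equation itself, using \cref{lemm:T}\ref{it:lemT_3} and \cref{lemm:2}\ref{it:lemm2-2}. Pointwise, $\|T\|\le \frac{2}{\lambda t^3}\|x-\xbar\|$ and $\|E\|\le\frac{\gamma\beta\tau}{\lambda t^3}\|x-\xbar\|$ (\cref{lemm:1}\ref{it:lemm:1-2}) give the $O(t^{-3})$ bounds, and the same lemmas give $O(t^{-4})$ for their derivatives and $\|\ddot x\|=O(t^{-2})$. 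Next, comparing two values of $b$ yields $\lim\|x-\xbar\|$ and $\lim t\langle\dot x,x-\xbar\rangle$, and then $\lim t^2\|\dot x\|^2$ exists. Integrability of $t\|\dot x\|^2$ forces that last limit to be $0$, giving $\|\dot x\|=o(1/t)$. To upgrade to $\|T\|=o(t^{-3})$, apply \cref{lemm:3} with $h=T$. This requires $\|2t^2T+t^3\dot T\|=o(1/t)$, which I will extract from the equation as $\frac{\dd}{\dd t}(t^3 T)$-type combinations, via $\ddot x+\frac{\alpha}{t}\dot x=o(t^{-2})$ averaged, and possibly an $\omega$-case split.

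\textbf{Strong convergence.} By \cref{lemma:uniform_monotony}, $\langle T,x-\xbar\rangle\ge\frac{1}{\lambda t^3}\Psi_A(\|x-\xbar\|)$. Since $t^3\|T\|=o(1)$ and $x$ is bounded, $\Psi_A(\|x-\xbar\|)\le \lambda t^3\|T\|\|x-\xbar\|=o(1)$. Because $\Psi_A>0$ on $(0,\infty)$ is increasing, $x\to\xbar$ strongly.

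\textbf{Main obstacle.} I expect the hardest step to be making the energy estimate close despite the sign-indefinite perturbation $E$, whose inner product with $x-\xbar$ is only bounded below up to $-O(t^{-3})\|x-\xbar\|^2$. The first attempt is the Grönwall absorption sketched above. If that fails, I will use the sharper cocoercivity split, combining $\frac{\lambda t^3}{4}\|T\|^2$ and $\frac{\lambda t^3}{2}\|E\|^2$ to dominate cross terms, and tune $b$ and the Young parameter.
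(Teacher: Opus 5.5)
\textbf{There is a genuine gap, at the step that carries the whole convergence claim.}

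Your Lyapunov part is sound and differs only mildly from the paper. With constant $b\in(0,\alpha-1)$, the bad term from \cref{lemm:1}\ref{it:lemm:1-3} is $\mathcal{O}(t^{-2})\|x-\xbar\|^2$. This is integrable, so Gr\"onwall closes the estimate, and the condition on $\lambda$ is not even needed there. The paper instead takes $b(t)=1+\frac1t$, which creates a dissipative term $-\frac{\alpha}{2t^2}\|x-\xbar\|^2$, and uses $\lambda>4(1+\gamma\beta\tau)^2/\alpha$ to absorb the error pointwise.

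The gap is the upgrade to $\|T\|=o(t^{-3})$, on which your strong convergence rests. What you actually control is $\|\frac{\dd}{\dd t}(\lambda(t)T)\|\le 4\|\dot x\|=o(1/t)$ from \cref{lemm:T}\ref{it:lemT_3}. Since $\dot\lambda(t)=3\lambda t^2$, this bounds $\frac{\dd}{\dd t}(t^3T)=3t^2T+t^3\dot T$, not $2t^2T+t^3\dot T$. The two differ by $t^2T$, which is only $\mathcal{O}(1/t)$, so hypothesis (i) of \cref{lemm:3} is unavailable. Integrating the correct identity only gives $t^3T(t)=t_0^3T(t_0)+\int_{t_0}^t r(s)\,\dd s$ with $\|r(s)\|=o(1/s)$, which need not tend to $0$.

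This is not a technicality. The quantity $\lambda(t)T=x(t)-J_{\gamma A}(x(t)-\gamma\Bxb(x(t)))$ is a fixed residual map evaluated along the trajectory, so $t^3\|T\|\to0$ is equivalent to $x(t)\to\xbar$. Meanwhile the dissipation only yields $\int t^4\|T\|^2\,\dd t=\lambda^{-2}\int t^{-2}\|\lambda(t)T\|^2\,\dd t<\infty$, which is vacuous for bounded trajectories. Under the scaling $\lambda(t)=\lambda t^2$, the same dissipation would give $\int t^{-1}\|\lambda(t)T\|^2\,\dd t<\infty$, which does force the residual to $0$. Comparing two values of $b$ likewise gives existence of $\lim\|x-\xbar\|$, not that it is zero.

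In fact the convergence claims in (i) and the bound $\|T\|=o(t^{-3})$ are false. Take $\cH=\R$, $B\equiv0$, $\mm_x$ independent of $x$, $A=\mu\Id$ with $\mu>\beta\tau$, and $\omega=0$. All assumptions hold, $\xbar=0$, $\Exb\equiv0$, and \eqref{eq:sole} becomes $(t^\alpha\dot x)'=-k\,t^{\alpha-3}x$ with $k=\frac{\gamma\mu}{\lambda(1+\gamma\mu)}$.

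Consider the solution with $x(s)=1$, $\dot x(s)=0$, where $s\ge\max\{t_0,\frac{2k}{\alpha-2}\}$; by linearity it extends backward to $[t_0,s]$. For all $t\ge s$ one gets $-\frac{k}{(\alpha-2)t^2}\le\dot x(t)\le0$ and $x(t)\ge\frac12$. Hence $x(t)\downarrow x_\infty\ge\frac12$, $t^3|T|=k\,x(t)\to k\,x_\infty\neq0$, and $\Psi_A(|x(t)|)\to\frac{\gamma\mu}{1+\gamma\mu}x_\infty^2>0$.

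The paper's proof follows your route at this step and makes the same slip. It asserts $\|2t^2g+t^3\dot g\|=o(1/t)$ for $g=T(x(t))$, whereas the product rule with $\lambda(t)=\lambda t^3$ gives $3t^2$. So neither argument establishes these claims, and they cannot be proved as stated. What survives, in your plan and in the paper, is:
\begin{itemize}
\item boundedness of the trajectory;
\item the two integral estimates;
\item $\|\dot x\|=o(1/t)$ and $\|\ddot x\|=\mathcal{O}(t^{-2})$;
\item the $\mathcal{O}$-bounds on $T$, $\Exb$ and their derivatives.
\end{itemize}
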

\begin{remark}
	Before proceeding with the proof, let us justify the differentiability of the Lyapunov energies below. In light of \cref{thm:1}, the solution satisfies $x \in W^{2,1}_{\mathrm{loc}}([t_0, \infty); \cH)$, so that  $x(\cdot)$ and $\dot{x}(\cdot)$ are absolutely continuous on bounded intervals. Furthermore, \cref{lemm:T}\ref{it:lemT_1} ensures that $\Txb$ is cocoercive, and hence Lipschitz continuous. Consequently, the composition mapping $t\mapsto\Txb(x(t))$ is absolutely continuous and thus differentiable almost everywhere and so are all the Lyapunov energies considered in the proofs of \cref{thm:convergence,thm:convergence_sm}. 
\end{remark}
\begin{proof}
    \textbf{Lyapunov analysis.} To simplify the presentation, we set 
    $\T := \opT{\bar x}{\lambda(t)}{\gamma(t)} (x(t))$ and $\Err := \Exbar{\lambda(t)}{\gamma(t)}$, and when convenient, we use the notation $\partial_t$ instead of $\frac{\dd }{\dd t}$. Then \eqref{eq:sole} is rewritten as
    \begin{equation}\label{eq:sol1}
    \ddot{x}(t)+\frac{\alpha}{t} \dot{x}(t)+ \T(x(t)) +\omega \frac{\dd}{\dd t}\left(\T(x(t))\right) + \Err(x(t)) + \omega \frac{\dd}{\dd t}\left( \Err(x(t)) \right) =0.
\end{equation}
    Define, for $t\in [t_0,\infty)$ the following Lyapunov function
    \begin{equation}\label{eq:E}
    \mathcal{E}(t):=\frac12\|b(t)(x(t)-\bar{x}) + t(\dot{x}(t) + \omega \T(x(t)) + \omega \Err(x(t)) )\|^2 + \frac{\delta(t)}{2}\|x(t)-\bar{x}\|^2,
    \end{equation}
    where $b(t)$ and $\delta(t)$ are two positive functions that will be suitably specified to ensure that $\E$ is nonincreasing.
    Differentiation of $\mathcal{E}$ with respect to time yields
\begin{equation*}
    \begin{aligned}
        \dot{\mathcal{E}}(t) = \bigg\langle\ 
            & b(t)(x(t) - \bar{x}) + t\left[\dot{x}(t) + \omega \T(x(t)) + \omega \Err(x(t))\right], 
             \dot{b}(t)(x(t) - \bar{x}) + b(t)\dot{x}(t)\\
             &+\dot{x}(t) + \omega \T(x(t)) + \omega \Err(x(t)) + t\left[\ddot{x}(t) + \omega \partial_t\T(x(t)) + \omega \partial_t\Err(x(t))\right] 
        \bigg\rangle\\
        &+ \frac12 \dot{\delta}(t)\|x(t)-\bar{x}\|^2 + \delta(t)\langle \dot{x}(t), x(t)-\bar{x}\rangle.
    \end{aligned}
\end{equation*}
After reduction and employing~\eqref{eq:sol1}, we obtain
\begin{equation*}
    \begin{aligned}
       \dot{\mathcal{E}}(t) &= \left[b(t)\dot{b}(t) + \frac12 \dot{\delta}(t)\right]\|x(t)-\bar{x}\|^2  + t\left[b(t)+1-\alpha \right]\|\dot{x}(t)\|^2\\
       & + \left[(b(t) +1 - \alpha)b(t) + \delta(t)+t\dot{b}(t)\right]\langle x(t)-\bar{x}, \dot{x}(t)\rangle\\
       &+ 2\omega t (\omega-t)\langle \T(x(t)), \Err(x(t))\rangle + (\omega-t)\omega t\|\T(x(t))\|^2 + (\omega-t)\omega t\|\Err(x(t))\|^2 \\ & +\left[\omega t \dot{b}(t) + (\omega -t)b(t) \right]\langle x(t)-\bar{x}, \T(x(t))\rangle 
       + \left[(b(t) + 1-\alpha)\omega t + (\omega -t)t \right]\langle \dot{x}(t), \T(x(t))\rangle \\
       &+\left[\omega t \dot{b}(t) + (\omega -t)b(t) \right]\langle x(t)-\bar{x}, \Err(x(t))\rangle 
       + \left[(b(t) + 1-\alpha)\omega t + (\omega -t)t \right]\langle \dot{x}(t), \Err(x(t))\rangle .
    \end{aligned}
\end{equation*}
Here we choose 
\begin{equation}\label{eq:b-detla}
	b(t) = 1+\frac{1}{t},\ \delta(t) = -(b(t) + 1-\alpha)b(t) -t\dot{b}(t) = \alpha-2 +\frac{\alpha-2}{t} - \frac{1}{t^2}.
\end{equation}
 We then have
\begin{equation*}
    \begin{aligned}
        \dot{\mathcal{E}}(t) &= -\frac{\alpha}{2t^2}\|x(t)-\bar{x}\|^2 + \left[1-(\alpha-2)t\right]\|\dot{x}(t)\|^2  + (\omega-t)\omega t\|\T(x(t)) + \Err(x(t))\|^2\\
        &  +(\omega-1-t)\left\langle x(t)-\bar{x}, \T(x(t))\right\rangle +  \left[ -t^2 - (\alpha-3)\omega t + \omega \right]\langle \dot{x}(t), \T(x(t))\rangle  \\
       & +(\omega-1-t)\left\langle x(t)-\bar{x}, \Err(x(t))\right\rangle +  \left[ -t^2 - (\alpha-3)\omega t + \omega \right]\langle \dot{x}(t), \Err(x(t))\rangle .
    \end{aligned}
\end{equation*}
Now, using  $\frac{\lambda(t)}{2}$-cocoercivity of $\T$, which follows from \cref{lemm:T}-\ref{it:lemT_1}, and the fact that $T(\bar{x}) = 0$, we get, for all $t\geq t_1 = \max\{t_0, \omega\}$,
\begin{align*}
(\omega-1-t)\langle x(t)-\bar{x}, \T(x(t))\rangle &\leq(\omega-1-t) \frac{\lambda(t)}{2}\|\T(x(t))\|^2.
\end{align*}
It follows from \cref{lemm:1}-\ref{it:lemm:1-3} that
$$
(\omega-1-t)\langle x(t)-\bar{x}, \Err(x(t))\rangle \leq (\omega-1-t)\frac{\lambda(t)}{2}\|\Err(x)\|^2 - (\omega-1-t)\frac{(1+\gamma \beta \tau)^2}{2\lambda(t) }\|x(t)- \bar{x}\|^2.
$$
Therefore,
\begin{equation}\label{eq:dE}
    \begin{aligned}
        \dot{\mathcal{E}}(t) &\leq \left[-\frac{\alpha}{2t^2}  - (\omega - 1 - t)\frac{(1+\gamma \beta \tau)^2}{2\lambda(t)} \right]\|x(t)-\bar{x}\|^2 + \left[1-(\alpha-2)t\right]\|\dot{x}(t)\|^2  \\
        &  +(\omega-1-t) \frac{\lambda(t)}{2}\|\T(x(t))\|^2 +  \left[ -t^2 - (\alpha-3)\omega t + \omega \right]\langle \dot{x}(t), \T(x(t))\rangle  \\
       & +(\omega-1-t) \frac{\lambda(t)}{2}\|\Err(x(t))\|^2 +  \left[ -t^2 - (\alpha-3)\omega t + \omega \right]\langle \dot{x}(t), \Err(x(t))\rangle .
    \end{aligned}
\end{equation}
Since $\alpha \geq 3$, we may find $p_1>0$ such that $ \alpha > 2+p_1$. This being said, there exists $t_2 > 0$ such that for all $t \geq t_2$, we have \[
0< t+1-\omega \leq 2t,~\mbox{and}~1-(\alpha-2)t \leq -p_1 t.
\]
Since $\lambda(t) = \lambda t^3$ with $\lambda > \frac{4(1+ \gamma \beta \tau)^2}{\alpha}$, we deduce that, for all $t \geq\max\{t_1, t_2\}$,
\[
-\frac{\alpha}{2t^2}  - (\omega - 1 - t)\frac{(1+\gamma \beta \tau)^2}{2\lambda(t)} < 0,
\]
and \eqref{eq:dE} becomes
\begin{equation*}
    \begin{aligned}
        \dot{\mathcal{E}}(t) \leq -p_1 t \|\dot{x}(t)\|^2 &+  \frac{(\omega-1-t)\lambda(t)}{2}
 \|\T(x(t))\|^2 + \left[ -t^2 - (\alpha-3)\omega t + \omega \right]\left\langle \dot{x}(t), \T(x(t))\right\rangle\\
 &+  \frac{(\omega-1-t)\lambda(t)}{2}
 \|\Err(x(t))\|^2 + \left[ -t^2 - (\alpha-3)\omega t + \omega \right]\left\langle \dot{x}(t), \Err(x(t))\right\rangle.
    \end{aligned}
\end{equation*}
Choose $\epsilon$ such that $0< \epsilon < \min\{p_1, 1/2\}$, we obtain
\begin{equation}
    \begin{aligned}
        &\dot{\mathcal{E}}(t) + \frac{\epsilon}{2}t\|\dot{x}(t)\|^2 + \frac{\epsilon}{2}t\lambda(t)\|\T(x(t))\|^2+\frac{\epsilon}{2}t\lambda(t)\|\Err(x(t))\|^2\\
        &\leq \left[-p_1 +\frac{\epsilon}{2}\right]  t \|\dot{x}(t)\|^2 +   \frac{\omega -t -1+\epsilon t}{2}\lambda(t)
 \|\T(x(t))\|^2 +  \left[ -t^2 - (\alpha-3)\omega t + \omega \right]\left\langle \dot{x}(t), \T(x(t))\right\rangle\\
 &+   \frac{\omega -t -1+\epsilon t}{2}\lambda(t)
 \|\Err(x(t))\|^2 +  \left[ -t^2 - (\alpha-3)\omega t + \omega \right]\left\langle \dot{x}(t), \Err(x(t))\right\rangle.
    \end{aligned}
\end{equation}
By the choice of $\epsilon$, we have $-p_1+\epsilon/2 \leq -p_1/2$, and $\frac{\omega -t -1+\epsilon t}{2} \leq -\frac14 t + \frac{\omega-1}{2}$, then for all $t\geq \max\{t_1, t_2\}$, we have
\begin{equation}\label{eq:de5}
    \begin{aligned}
        &\dot{\mathcal{E}}(t) + \frac{\epsilon}{2}t\|\dot{x}(t)\|^2 + \frac{\epsilon}{2}t\lambda(t)\|\T(x(t))\|^2 + \frac{\epsilon}{2}t\lambda(t)\|\Err(x(t))\|^2\\
        &\leq  -\frac{p_1}{2}  t \|\dot{x}(t)\|^2 +\left[-\frac14 t\lambda(t) + \frac{\omega-1}{2}\lambda(t)\right]
\|\T(x(t))\|^2+ \left[ -t^2 - (\alpha-3)\omega t + \omega\right]\left\langle \dot{x}(t), \T(x(t))\right\rangle\\
&+ \left[-\frac14 t\lambda(t) + \frac{\omega-1}{2}\lambda(t)\right]
\|\Err(x(t))\|^2+ \left[ -t^2 - (\alpha-3)\omega t + \omega\right]\left\langle \dot{x}(t), \Err(x(t))\right\rangle
    \end{aligned}.
\end{equation}
Set 
$$
R(t): = \left[ -t^2 - (\alpha-3)\omega t + \omega \right]^2 - 4\frac{p_1}{4}t\left[\frac14 t\lambda(t) - \frac{\omega-1}{2}\lambda(t)\right].
$$
Since $\lambda(t) = \lambda t^3$, then $R(t) = -\frac{\lambda p_1 }{4}t^5 + \mathcal{O}(t^4)$. Therefore, 
there exists $t_3>0$ such that for all $t\geq t_3$, $R(t)< 0$. Consequently, the right hand side of~\eqref{eq:de5} is nonpositive. For all $t \geq \max\{t_1, t_2,t_3\}$,
we have
\begin{equation}\label{eq:int}
\dot{\mathcal{E}}(t) + \frac{\epsilon}{2}t\|\dot{x}(t)\|^2 + \frac{\epsilon}{2}t\lambda(t)\|\T(x(t))\|^2 + \frac{\epsilon}{2}t\lambda(t)\|\Err(x(t))\|^2\leq 0.
\end{equation}
\textbf{Estimates.} 
First recall the choice of $\delta(t)$ in the Lyapunov function \eqref{eq:E}. Indeed, from \eqref{eq:b-detla}
\[
\delta(t) = -(b(t) + 1-\alpha)b(t) -t\dot{b}(t) = \alpha-2 +\frac{\alpha-2}{t} - \frac{1}{t^2},
\]
taking into account the fact that $\alpha > 2 + p_1$ for $p_1>0$, we deduce that there exists $t_4>0$ such that for all $t\geq t_4$, we have $\delta(t) \geq p_1$.  Let $T := \max\{t_1, t_2,t_3, t_4\}$, then for all $t\geq T$, we obtain by integrating \eqref{eq:int} on the interval $[T, t]$
\begin{equation}
\mathcal{E}(t)+\frac{\epsilon}{2} \int_{T}^t s\|\dot{x}(s)\|^2 \dd s+\frac{\epsilon}{2} \int_{T}^t s\lambda(s)\left\| \T^{\m_{\bar{x}}}_{\lambda(s), \gamma}(x(s))\right\|^2 \dd s  +\frac{\epsilon}{2} \int_{T}^t s\lambda(s)\left\|\Err^{{\bar{x}}}_{\lambda(s), \gamma}(x(s))\right\|^2 \dd s \leq \mathcal{E}\left(T\right).
\end{equation}
From this we immediately deduce that
\begin{align}
    &\sup_{t\ge t_0}\Vert x(t) - \bar{x}\Vert<\infty,\\
    & \sup_{t\geq t_0} \left\|(1+\frac{1}{t})(x(t)-\bar{x}) + t\left(\dot{x}(t) + \omega \opT{\bar x}{\lambda(t)}{\gamma(t)} (x(t)) + \omega\Exbar{\lambda(t)}{\gamma(t)}(x(t))\right)\right\|^2 < \infty,\label{eq:14}\\
    & \int_{t_0}^{\infty} t\|\dot{x}(t)\|^2\dd t < \infty,\label{eq:11}\\
    & \int_{t_0}^{\infty} t^4\left\|\T^{\m_{\bar{x}}}_{\lambda(t), \gamma}(x(t))\right\|^2 \dd t< \infty, \ \int_{t_0}^{\infty} t^4\left\|\Err^{{\bar{x}}}_{\lambda(t), \gamma}(x(t))\right\|^2 \dd t< \infty\label{eq:13}.
    \end{align}
Moreover, it follows from \cref{lemm:T} that $\T:=\opT{\bar x}{\lambda(t)}{\gamma(t)}$ is $\lambda(t)/2$-cocoercive, which implies also $2/\lambda(t)$-Lipschitz continuity, and therefore
$$
\|\T(x(t))\| = \|\T(x(t)) - \T(\xbar)\| \leq \frac{2}{\lambda(t)} \|x(t)-\bar{x}\|.
$$
Taking into account the boundedness of $\|x(t)-\bar{x}\|$ and $\lambda(t) = \lambda t^3$, we deduce that
\begin{equation}\label{esti:T}
\|\T(x(t))\| = \mathcal{O}\left(\frac{1}{t^3}\right),\ \mathrm{as}\ t\to \infty.
\end{equation}
It follows from \cref{lemm:1,lemm:2} that 
\begin{equation}\label{esti:E}
\begin{aligned}
&\|\Err(x(t))\| \leq \frac{\gamma}{\lambda(t)}\beta \tau\|x(t) - \bar{x}\| =  \mathcal{O}\left(\frac{1}{t^3}\right),\ \mathrm{as}\ t\to \infty\\
& \left\|\partial_t\Err(x(t)) \right\| \leq  \frac{ |\dot{\lambda}(t)| }{\lambda(t)^2}\beta\tau\|x(t)-\bar{x}\| + \frac{c_0}{\lambda(t)}\|\dot{x}(t)\| =  \mathcal{O}\left(\frac{1}{t^4}\right),\ \mathrm{as}\ t\to \infty
\end{aligned}
\end{equation}
Exploiting \eqref{eq:14}, \eqref{esti:T}, \eqref{esti:E} and the boundedness of $\|x(t)-\bar{x}\|$, it follows that
\begin{equation}
    \|\dot{x}(t)\| = \mathcal{O}\left(\frac1t\right),\ \mathrm{as}\ t\to \infty.
\end{equation}
On one hand, since $\gamma>0$ is constant, we deduce from \cref{lemm:T}-\ref{it:lemT_3} that
\begin{equation}\label{eq:esti_dT}
\begin{aligned}
\left\|\frac{\dd}{\dd t}\left(\lambda(t) \T(x(t))\right)\right\| &\leq 4\|\dot{x}(t)\| = \mathcal{O}\left(\frac1t\right),\ \mathrm{as}\ t\to \infty.
\end{aligned}
\end{equation}
On the other hand, we have
$$
\left\|\frac{\dd}{\dd t} \left( \lambda(t) \T(x(t)) \right)\right\|=\left\|\dot{\lambda}(t) \T(x(t))+\lambda(t) \frac{\dd}{\dd t} \T(x(t))\right\|,
$$
which yields in light of \eqref{esti:T} and \eqref{eq:esti_dT}
$$
\left\|\lambda(t) \partial_t\T(x(t))\right\|
 \leq \left\|\partial_t\left( \lambda(t) \T(x(t)) \right)\right\| + \left\|\dot{\lambda}(t) \T(x(t))\right\| = \mathcal{O}\left(\frac{1}{t}\right),\ \mathrm{as}\ t\to \infty,
$$
and thus
\begin{equation}\label{eq:order}
\left\|\frac{\dd}{\dd t} \T(x(t))\right\| = \mathcal{O}\left(\frac{1}{t^4}\right),\ \mathrm{as}\ t\to \infty.
\end{equation}
By combining equation \eqref{eq:sol1} and \cref{lemm:1,lemm:2} we obtain, as $t\to \infty$,
\begin{equation}\label{eq:ddx}
\begin{aligned}
   &\| \ddot{x}(t)\| = \left\|\frac{\alpha}{t} \dot{x}(t)+ \T(x(t)) +\omega \frac{\dd}{\dd t}\T(x(t))+ \Err(x(t)) + \omega \frac{\dd}{\dd t}\Err(x(t))\right\|\\
  & \leq \frac{\alpha}{t}\|\dot{x}(t)\| + \|\T(x(t))\| + \omega\left\| \frac{\dd}{\dd t}\left(\T(x(t))\right) \right\| + \|\Err(x(t))\| + \omega \left\|\frac{\dd}{\dd t}\Err(x(t))\right\|\\
  &= \mathcal{O}\left(\frac{1}{t^2}\right).
   \end{aligned}
\end{equation}
Multiplying by $t^3$ in the first equality in \eqref{eq:ddx} and using Young's inequality, we get
$$
\begin{aligned}
   t^3\| \ddot{x}(t)\|^2 &\leq 5\alpha^2 t\|\dot{x}(t)\|^2 + 5t^3 \|\T(x(t))\|^2 + 5\omega^2t^3\left\| \partial_t\left(\T(x(t))\right) \right\|^2\\
   &+ 5t^3\|\Err(x(t))\|^2 + 5\omega^2 t^3 \left\|\partial_t\Err(x(t))\right\|^2.
   \end{aligned}
    $$
    According to~\eqref{eq:11},~\eqref{esti:T},~\eqref{esti:E} and~\eqref{eq:order}, we obtain that 
    \begin{equation}\label{eq:int2}
    \int_{t_0}^{\infty} t^3 \|\ddot{x}(t)\|^2d t < \infty.
    \end{equation}
    To finish the proof of \ref{it:mr-integral} and \ref{it:mr-ptw-est}, we note that 
    $$
    \begin{aligned}
    \frac{\dd}{\dd t}t^2 \|\dot{x}(t)\|^2 &= 2t \|\dot{x}(t)\|^2 + 2t^2 \langle \ddot{x}(t), \dot{x}(t)\rangle\\
    &\leq 2t \|\dot{x}(t)\|^2 + t^3 \|\ddot{x}(t)\|^2 + t\|\dot{x}(t)\|^2 = t^3 \|\ddot{x}(t)\|^2 + 3t\|\dot{x}(t)\|^2.
    \end{aligned}
    $$
It follows from~\eqref{eq:11} and~\eqref{eq:int2} that $t^3 \|\ddot{x}(t)\|^2 + 3t\|\dot{x}(t)\|^2 \in L^1([t_0,\infty], \mathbb{R})$. Therefore, the limit $\lim_{t\to \infty} t^2 \|\dot{x}(t)\|^2$ exists. Using~\eqref{eq:11} again, we have
$$
\int_{t_0}^{\infty} \frac1t (t^2 \|\dot{x}(t)\|^2)\dd t = \int_{t_0}^{\infty} t \|\dot{x}(t)\|^2 \dd t < \infty,
$$
from which we deduce that $\lim_{t\to \infty} t^2 \|\dot{x}(t)\|^2 = 0$ and consequently
$$
\|\dot{x}(t)\| = o\left( \frac1t\right)\ \mathrm{as}\ t\to \infty.
$$
It follows from~\eqref{eq:esti_dT} that
\begin{equation}\label{eq:dlambda_T}
\left\|\partial_t\left(\lambda(t) \T(x(t))\right)\right\| = o\left(\frac1t\right).
\end{equation}
Again, since
$$
\left\|\partial_t \left( \lambda(t) \T(x(t)) \right)\right\|=\left\|\dot{\lambda}(t) \T(x(t))+\lambda(t) \partial_t\T(x(t))\right\|,
$$
we set $g(t) = \T(x(t))$. We have from \eqref{eq:dlambda_T} that 
$ \|2t^2 g(t) + t^3  \frac{d}{d t} g(t)\| = o(1/t)$ where $\|g(t)\| = \mathcal{O}(1/t^3)$, $ \frac{\dd}{\dd t} g(t) = \mathcal{O}(1/t^4)$ following from \eqref{esti:T} and \eqref{eq:order}. Then applying \cref{lemm:3} to $g$, we get
\begin{equation}\label{est11}
    \|g(t)\| = \left\| \T(x(t))\right\| = o\left(\frac{1}{t^3}\right)\ \mathrm{as}\ t\to \infty,
\end{equation}
as desired.

    \textbf{Strong convergence of trajectories.} 
    Contrary to the standard monotone setting relying on Opial's lemma to establish weak convergence, our framework allows us to directly deduce the strong convergence of the trajectories by exploiting the uniform monotonicity of the operator. 
    
    Indeed, recall that from \cref{lemma:uniform_monotony}, the operator $\T \coloneqq \opT{\bar x}{\lambda(t)}{\gamma}$ satisfies the uniform monotonicity property:
    \[
        \langle \T(x(t)) - \T(\bar{x}), x(t) - \bar{x} \rangle \geq \frac{1}{\lambda(t)} \Psi_{A}(\Vert x(t) - \bar{x}\Vert),
    \]
    where $\Psi_{A}(s) \coloneqq s \left( s - \psi^{-1}(s) \right)$ is a strictly increasing function vanishing only at $0$.
    Since $\T(\bar{x}) = 0$, applying Cauchy-Schwarz inequality gives:
    \[
        \frac{1}{\lambda(t)} \Psi_{A}(\Vert x(t) - \bar{x}\Vert ) \leq \Vert\T(x(t))\Vert \Vert x(t) - \bar{x}\Vert,
    \]
  that is 
    \begin{equation}\label{eq:Psi}
        \Psi_{A}(\Vert x(t) - \bar{x}\Vert) \leq \lambda t^3 \Vert\T(x(t))\Vert \Vert x(t) - \bar{x}\Vert.
    \end{equation}
    Since the trajectory $x(t)$ is bounded, $\sup_{t \geq t_0} \Vert x(t) - \bar{x}\Vert < \infty$. Moreover, we have thanks to \eqref{est11} that $\Vert \T(x(t))\Vert = o(1/t^3)$, which implies that $t^3 \Vert \T(x(t))\Vert \to 0$ as $t \to \infty$. 
    We deduce that:
    \[
        \Psi_{A}(\Vert x(t) - \bar{x}\Vert) = o(1) \quad \text{as } t \to \infty.
    \]
    Since $\Psi_{A}$ is continuous, strictly increasing, and vanishes only at $0$, we get that $\Vert x(t) - \bar{x}\Vert \to 0$ as $t \to \infty$. This gives the strong convergence of the trajectory to the unique equilibrium $\bar{x}$.
\end{proof}
An immediate consequence of \cref{thm:convergence} concerns the asymptotic behavior of the family $(\mm_{x(t)})_t$. The following corollary establishes the convergence of the measure associated to the trajectory in the $\Wass_1$ distance.
\begin{corollary}\label{cor:measure_cv1}
    Under the assumptions of \cref{thm:convergence}, the decision-dependent probability measure $\mm_{x(t)}$ converges to the equilibrium measure $\mm_{\bar{x}}$ in the $\Wass_1$-distance:    
    \[
        \Wass_1(\mm_{x(t)}, \mm_{\bar{x}})  = o(1),~\text{as}~t \to \infty.
    \]
\end{corollary}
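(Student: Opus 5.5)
The corollary follows from two ingredients already available: the Lipschitz continuity of the distribution map (\cref{assu:W1}) and the strong convergence of the trajectory from \cref{thm:convergence}\ref{it:mr-convergence}.

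First, for each $t\geq t_0$, apply \cref{assu:W1} with $x = x(t)$ and $y=\bar{x}$. This gives
\[
\Wass_1(\mm_{x(t)}, \mm_{\bar{x}}) \leq \tau \Vert x(t) - \bar{x}\Vert .
\]
Second, invoke \cref{thm:convergence}\ref{it:mr-convergence}, which gives $\Vert x(t)-\bar{x}\Vert \to 0$ as $t\to\infty$. Its proof obtains this as follows. From $\Vert \opT{\xbar}{\lambda(t)}{\gamma}(x(t))\Vert = o(1/t^3)$ and the boundedness of the trajectory, one gets $\Psi_A(\Vert x(t)-\bar{x}\Vert) = o(1)$. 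Since $\Psi_A$ is continuous, strictly increasing and vanishes only at $0$, this forces $\Vert x(t)-\bar x\Vert\to 0$. Combining the two facts yields $\Wass_1(\mm_{x(t)}, \mm_{\bar{x}}) = o(1)$.

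No step is a genuine obstacle; the only point to check is that the inversion of $\Psi_A$ is legitimate. This holds because $\psi^{-1}(s)<s$ for $s>0$: indeed $\psi(s) = s+\gamma\phi(s)/s > s$, since $\phi(s)>0$ by strict monotonicity and $\phi(0)=0$. Hence $\Psi_A(s)>0$ for every $s>0$. Together with continuity and boundedness of $\Vert x(t)-\bar x\Vert$, a standard contradiction argument along a subsequence with $\Vert x(t_n)-\bar x\Vert\geq \eps$ then gives convergence to $0$.

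No quantitative rate is claimed in the statement. An explicit rate would only be available through $\Psi_A^{-1}$, as in the strongly monotone case treated later.
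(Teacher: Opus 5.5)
Your proposal is correct and follows the paper's argument. The paper likewise combines $\Wass_1(\mm_{x(t)},\mm_{\bar x})\le\tau\|x(t)-\bar x\|$ from \cref{assu:W1} with $\Psi_A(\|x(t)-\bar x\|)=o(1)$, which it obtains from \eqref{eq:Psi} and $\|\T(x(t))\|=o(1/t^3)$, and then with the invertibility of $\Psi_A$. Your added check that $\Psi_A(s)>0$ for $s>0$, via $\psi(s)>s$, is a detail the paper leaves implicit.
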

\begin{proof}
    We know from \cref{assu:W1} that:
    \begin{equation}\label{eq:cor_w1}
        \Wass_1(\mm_{x(t)}, \mm_{\bar{x}}) \leq \tau \Vert x(t) - \bar{x}\Vert,
    \end{equation}
and by \eqref{eq:Psi} 
        \begin{equation}\label{eq:Psi_cor}
        \Psi_A(\Vert x(t) - \bar{x}\Vert) \leq \lambda t^3 \Vert\T(x(t))\Vert \Vert x(t) - \bar{x}\Vert.
    \end{equation}
   The pointwise estimates in \cref{thm:convergence}\ref{it:mr-ptw-est} ensures that $\Vert\T(x(t))\Vert = o(1/t^3)$ as $t \to \infty$. Consequently, the product $\lambda t^3 \Vert\T(x(t))\Vert$ is $o(1)$. Since the trajectory is bounded, the entire right-hand side in \eqref{eq:Psi_cor} is $o(1)$, and thus:
    \[
        \Psi_A(\Vert x(t) - \bar{x}\Vert) = o(1) \quad \text{as } t \to \infty.
    \]
   Applying $\Psi_A^{-1}$ to this asymptotic bound gives $\Vert x(t) - \bar{x}\Vert = \Psi_A^{-1}(o(1)) = o(1).$ We conclude that $\Wass_1(\mm_{x(t)}, \mm_{\bar{x}}) \to 0$ as $t \to \infty$.
\end{proof}

\section{Exponential Convergence under Strong Monotonicity}\label{section:cv-sm}

In the previous section, we established the strong convergence of the trajectories under the general framework of uniform monotonicity of \cref{assu:A-UM}. The convergence rate was implicitly governed by the modulus $\Psi_A$. In this section, we focus on the highly common setting where the operator $A$ is strongly monotone, \ie under \cref{assu:A:SM}. As shown previously in \cref{lemma:uniform_monotony}, under \cref{assu:A:SM}, the splitting operator $\Txb$ becomes strongly monotone with modulus:
\[
    \tilde{\mu} \coloneqq \frac{\gamma}{\lambda} \frac{\mu_A}{1+\gamma\mu_A} > 0.
\]
It is well known that in the strongly monotone setting, the asymptotic vanishing damping $\nu(t) = \alpha/t$ is no longer the optimal choice. Instead, drawing inspiration from Polyak's Heavy Ball method for strongly convex optimization, we can take $\nu(t)$ to be a constant calibrated to the modulus of strong monotonicity. Specifically, we consider the critical damping parameter $\nu(t) \equiv 2\sqrt{\tilde{\mu}}$ (see, \eg \cite{Attouch-Chbani-Fadili-Riahi,Attouch-Fadili-Kungertsev}), and we fix $\lambda$ and $\gamma$ to be constant positive real numbers. The decision-dependent dynamical system \eqref{eq:sol} becomes:
\begin{equation}\tag{s-DIN-SM}\label{eq:dynamic-sm}
    \ddot{x}(t) + 2\sqrt{\tmu} \dot{x}(t)
    + \Txb(x(t)) + \omega \frac{\dd}{\dd t}\bigl(\Txb(x(t))\bigr)
    + \Exb(x(t)) + \omega \frac{\dd}{\dd t}\bigl( \Exb(x(t)) \bigr) = 0,
\end{equation}
for $t > t_0 \geq 0$. Note that the existence and uniqueness of a strong global solution to \eqref{eq:dynamic-sm} follow directly from \cref{thm:1}.

To perform the Lyapunov analysis, we introduce the energy function $\mathcal{V}: [t_0, \infty) \to \mathbb{R}^+$ defined by:
\begin{equation}\label{eq:Lyapunov_SM}
    \mathcal{V}(t) = \langle \Txb(x(t)), x(t)-\xbar \rangle + \frac{1}{2}\|v(t)\|^2,
\end{equation}
where the auxiliary velocity variable is defined as
\[
    v(t) \coloneqq \sqrt{\tmu}(x(t) - \bar{x}) + \dot{x}(t).
\]

We are now ready to state the main result of this section.

\begin{theorem}\label{thm:convergence_sm}
    Suppose that \cref{assu:1,assu:2,assu:W1,assu:Bm,assu:A:SM} hold. Let $x: [t_0, \infty) \to \cH$ be a trajectory solution of \eqref{eq:dynamic-sm}. Assume that $\rho \coloneqq \frac{\beta\tau}{\tmu}$ and the damping coefficient $\omega$ satisfy the following conditions:
    \begin{subequations}\label{eq:conditions_globales}
        \begin{align}
            0 \leq \omega &< \min\left(\frac{1}{2\sqrt{\tmu}}, \frac{\lambda\sqrt{\tmu}}{2\sqrt{2} c_{0}}\right), \label{eq:cond_omega} \\
            \rho &< \frac{\sqrt{2}\lambda}{8\gamma}, \label{eq:cond_rho}
        \end{align}
    \end{subequations}
    where $c_0 = 2+ 2\gamma L + \gamma \beta \tau$.

    Then, the following properties hold:
    \begin{enumerate}[label=(\roman*)]
        \item\label{it:hd-it1} \textit{(Exponential decay).} For all $t \geq t_0$, the energy satisfies:
        \[
            \mathcal{V}(t) \leq \mathcal{V}(t_0) e^{-\frac{\sqrt{\tmu}}{8}(t-t_0)}.
        \]
        Consequently, we obtain the following bounds on the trajectory and the operator:
        \[
            \tmu\Vert x(t)-\bar{x}\Vert^2 \leq \langle \Txb(x(t)), x(t)-\xbar\rangle \leq \mathcal{V}(t_0) e^{-\frac{\sqrt{\tmu}}{8}(t-t_0)},
        \]
        and
        \[
            \frac{\lambda}{2}\Vert\Txb(x(t))\Vert^2 \leq \langle \Txb(x(t)), x(t)-\xbar\rangle \leq \mathcal{V}(t_0) e^{-\frac{\sqrt{\tmu}}{8}(t-t_0)}.
        \]
        Moreover, there exists a constant $C_1 > 0$ such that the velocity decays as:
        \[
            \Vert\dot{x}(t)\Vert \leq C_1 e^{-\frac{\sqrt{\tmu}}{16}t}.
        \]

        \item\label{it:hd-it2} \textit{(Integral estimate).} There exists a constant $C_2 > 0$ such that for all $t \geq t_0$:
        \[
            e^{-\sqrt{\tmu} t}\int_{t_0}^{t} e^{\sqrt{\tmu}s}\|\Txb(x(s))\|^2 \dd s \leq C_2 e^{-\frac{\sqrt{\tmu}}{8}t}.
        \]
    \end{enumerate}
\end{theorem}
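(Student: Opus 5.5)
We will run a Lyapunov argument on $\mathcal{V}$, aiming for the differential inequality $\dot{\mathcal V}(t)+\frac{\sqrt{\tmu}}{8}\mathcal V(t)\le 0$ for a.e. $t\ge t_0$; Gronwall then gives \ref{it:hd-it1}. Write $\T=\Txb$, $\Err=\Exb$ (constant $\lambda,\gamma$) and $v=\sqrt{\tmu}(x-\xbar)+\dot x$. Using \eqref{eq:dynamic-sm}, one has $\dot v=\sqrt{\tmu}\dot x+\ddot x=-\sqrt{\tmu}\dot x-\T-\omega\partial_t\T-\Err-\omega\partial_t\Err$.

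For the first term of $\mathcal V$, the map $x\mapsto\T(x)$ is not a gradient, so $\frac{d}{dt}\langle\T(x),x-\xbar\rangle=\langle\partial_t\T,x-\xbar\rangle+\langle\T,\dot x\rangle$. We then expand $\langle v,\dot v\rangle$. The cross terms $\langle\T,\dot x\rangle$ cancel, and we are left with:
\begin{itemize}
\item the good terms $-\sqrt{\tmu}\|\dot x\|^2-\sqrt{\tmu}\langle\T,x-\xbar\rangle$;
\item a mixed term $-\tmu\langle x-\xbar,\dot x\rangle$, which we absorb by writing $\dot x=v-\sqrt{\tmu}(x-\xbar)$ where convenient;
\item the Hessian-type terms $\langle\partial_t\T,x-\xbar\rangle-\omega\langle\partial_t\T,v\rangle$;
\item the perturbation terms $-\langle\Err,v\rangle-\omega\langle\partial_t\Err,v\rangle$.
\end{itemize}

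The ingredients for controlling these are as follows.
\begin{itemize}
\item Strong monotonicity (\cref{lemma:uniform_monotony}): $\langle\T,x-\xbar\rangle\ge\tmu\|x-\xbar\|^2$.
\item Cocoercivity (\cref{lemm:T}): $\langle\T,x-\xbar\rangle\ge\frac\lambda2\|\T\|^2$, and $\|\partial_t\T\|\le\frac{2}{\lambda}\|\dot x\|$ (from \cref{lemm:T}\ref{it:lemT_3} with $\dot\gamma=0$, which in fact gives $4/\lambda$).
\item \cref{lemm:1}: $\|\Err\|\le\frac{\gamma}{\lambda}\beta\tau\|x-\xbar\|=\frac{\gamma\rho\tmu}{\lambda}\|x-\xbar\|$.
\item \cref{lemm:2}\ref{it:lemm2-2}: $\|\partial_t\Err\|\le\frac{c_0}{\lambda}\|\dot x\|$.
\end{itemize}
For $\langle\partial_t\T,x-\xbar\rangle$, a cleaner route is to differentiate along time. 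Monotonicity of $\T$ only gives $\langle\partial_t\T,\dot x\rangle\ge0$, so it is better to bound it by Young's inequality, $\|\partial_t\T\|\|x-\xbar\|$, against $\|\dot x\|^2$ and $\tmu\|x-\xbar\|^2$. This may force a reweighting of $\mathcal V$; if needed, we would replace the coefficient of $\langle\T,x-\xbar\rangle$ by adding $\omega$-dependent corrections.

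Each bad term is then split by Young's inequality into fractions of $\sqrt{\tmu}\|\dot x\|^2$ and $\sqrt{\tmu}\tmu\|x-\xbar\|^2\le\sqrt{\tmu}\langle\T,x-\xbar\rangle$. The perturbation term $\langle\Err,v\rangle$ costs a factor of order $\frac{\gamma\rho}{\lambda}\sqrt{\tmu}$, and condition \eqref{eq:cond_rho} ($\gamma\rho/\lambda<\sqrt2/8$) keeps it below a fixed fraction. The $\omega$-terms cost order $\omega c_0/\lambda$ and $\omega\sqrt{\tmu}$ relative to $\sqrt{\tmu}$, which is exactly what \eqref{eq:cond_omega} controls. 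Finally, $\|v\|^2\le2\tmu\|x-\xbar\|^2+2\|\dot x\|^2$ converts the remaining negative terms into $-\frac{\sqrt{\tmu}}{8}\mathcal V$.

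The main obstacle is the bookkeeping of constants. The aim is to land precisely at rate $\sqrt{\tmu}/8$ under the stated thresholds, in particular handling $\langle\partial_t\T,x-\xbar\rangle$, which has no favorable sign.

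Once the decay of $\mathcal V$ is established, the consequences follow directly.
\begin{itemize}
\item The two bounds in \ref{it:hd-it1} follow from strong monotonicity, cocoercivity, and $\langle\T,x-\xbar\rangle\le\mathcal V$.
\item For the velocity, $\|\dot x\|\le\|v\|+\sqrt{\tmu}\|x-\xbar\|\le\sqrt{2\mathcal V}+\sqrt{\mathcal V}$, which gives decay $e^{-\sqrt{\tmu}t/16}$ with $C_1$ absorbing $e^{\sqrt{\tmu}t_0/16}$.
\item For \ref{it:hd-it2}, use $\|\T\|^2\le\frac2\lambda\mathcal V(t_0)e^{-\frac{\sqrt{\tmu}}{8}(s-t_0)}$ and integrate against $e^{\sqrt{\tmu}s}$. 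The integral is at most $\frac{8}{7\sqrt{\tmu}}\cdot\frac{2}{\lambda}\mathcal V(t_0)e^{\sqrt{\tmu}t_0/8}e^{\frac78\sqrt{\tmu}t}$. Multiplying by $e^{-\sqrt{\tmu}t}$ yields $C_2e^{-\sqrt{\tmu}t/8}$.
\end{itemize}
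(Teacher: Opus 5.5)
Your outline follows the paper's own argument. It uses the same energy $\mathcal{V}$, the same expression for $\dot{\mathcal{V}}$ (including the cancellation of $\langle\T,\dot x\rangle$ and the sign $-\omega\langle\dot x,\partial_t\T\rangle\le 0$), and Young's inequality on the $\Err$-terms via \cref{lemm:1,lemm:2}. It then applies Young's inequality to $\delta\langle\partial_t\T,x-\xbar\rangle$, where $\delta=1-\omega\sqrt{\tmu}>\frac12$.

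The step you defer as ``bookkeeping'' is a genuine gap, and it cannot be closed. In the paper's splitting, which is where your plan leads, the terms left to absorb it are $\mathbf{a}\|\dot x\|^2+\mathbf{b}\|x-\xbar\|^2$, with $\mathbf{a}\le\frac{\sqrt{\tmu}}{2}$ and $\mathbf{b}\le\frac{\tmu^{3/2}}{4}$. The only control on the bad term is $\|\partial_t\T\|\le\frac{2}{\lambda}\|\dot x\|$ (the paper even uses $\frac{4}{\lambda}$). Young's inequality then needs $\frac{\delta^2}{\lambda^2}\le\mathbf{a}\mathbf{b}\le\frac{\tmu^2}{8}$, i.e.\ $(\lambda\tmu)^2\ge 8\delta^2>2$. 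But $\lambda\tmu=\frac{\gamma\mu_A}{1+\gamma\mu_A}<1$ for every admissible choice. In words: the forcing from $\partial_t\T$ is only controlled at the scale $\frac1\lambda$, the Lipschitz constant of $\T$, and this always exceeds the modulus $\tmu$ that sets the dissipation.

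The paper's proof fails at exactly this point. Its interval $I=\bigl(\frac{2\delta}{\lambda\mathbf{b}},\frac{\lambda\mathbf{a}}{2\delta}\bigr)$ is nonempty iff $4\delta^2<\lambda^2\mathbf{a}\mathbf{b}$. Since $4\delta^2>1>\frac18>\lambda^2\mathbf{a}\mathbf{b}$, the interval is always empty, contrary to what is asserted. Separately, $\mathbf{a}>0$ requires $\omega<\frac{\lambda\sqrt{\tmu}}{4c_0}$, which \eqref{eq:cond_omega} does not imply.

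In fact the statement itself is false, so no rebalancing of Young weights and no reweighting of $\mathcal{V}$ can help. Here is a counterexample satisfying every hypothesis.
Take $\cH=\mathbb{R}^2\cong\mathbb{C}$, $B\equiv 0$ and $\mm_x\equiv\mm$. Then $\Err\equiv0$, $\theta$ and $L$ are arbitrary, and $\beta,\tau>0$ can be taken small enough that \eqref{eq:cond_rho} and $\beta\tau<\mu_A$ hold. Set $\omega=0$ and $\lambda=\gamma=1$. Let $A$ be multiplication by $\mu_A+i$, i.e.\ $\mu_A\Id$ plus the rotation by $\pi/2$. This $A$ is linear, maximally monotone and exactly $\mu_A$-strongly monotone, and $\xbar=0$.
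Then $\Txb$ is multiplication by $m=1-\frac{1}{1+\mu_A+i}=p+iq$, with $p,q\to\frac12$ as $\mu_A\to0$, while $\tmu=\frac{\mu_A}{1+\mu_A}\to0$.
\eqref{eq:dynamic-sm} reads $\ddot z+2\sqrt{\tmu}\,\dot z+mz=0$. Its characteristic roots lie in the open left half-plane iff $q^2<4\tmu p$. For $\mu_A=0.01$ one root is $r\approx0.222-0.770\,i$, so $z(t)=e^{rt}$ is a solution with $|z(t)|\to\infty$.

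Even in a gradient setting with sharp constants, the monotone decay claimed in (i) fails. Take $A=\partial\bigl(\frac{\mu_A}{2}|\cdot|^2+|\cdot|\bigr)$ on $\mathbb{R}$ with $\lambda=\gamma=1$, $\omega=0$, $B\equiv0$. Then $\Txb x=x$ for $|x|<1$. At $x=\frac12$, $\dot x=1$ one gets $\dot{\mathcal{V}}=\frac{1-\tmu}{2}-\frac54\sqrt{\tmu}>0$ for small $\mu_A$.

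A correct version needs extra hypotheses. Examples are damping calibrated to the Lipschitz constant of $\Txb$ rather than to $\tmu$, or a potential structure that allows $F(x)-F(\xbar)$ in place of $\langle\Txb(x),x-\xbar\rangle$ in $\mathcal{V}$. Granted the decay of $\mathcal{V}$, your derivations of the remaining bounds and of (ii) are fine.
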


Let us make several remarks before proving this result.
\begin{remark}
    \begin{itemize}
    \item The energy function $\mathcal{V}$ differs from the one typically used in standard inertial dynamics (e.g., \cite[Theorem 3.5]{EFA}) in two main aspects: the absence of $\Txb$ in the definition of the velocity variable $v(t)$, a simplification enabled by the coercivity property established in \cref{lemm:T}\ref{it:lemT_1}, and the presence of the term $\langle\Txb(x(t)), x(t)-\xbar\rangle$. 
    
    In particular, in the optimization setting where $A=0$ and $B(x,\xi)=\nabla_x f(x,\xi)$ (with $f \in C^{1}(\cH\times\Xi)$ and $G_{\mm}(x) \coloneqq \Ex_{\xi\sim\mm}[f(x,\xi)]$ is convex), setting $\lambda = \gamma$ yields $\opT{\bar{x}}{\lambda}{\gamma}(x) = \nabla G_{\mm_{\bar{x}}}(x)$. Consequently, by convexity:
    \[
        G_{\mm_{\bar{x}}}(x(t)) - \min_{\cH}G_{\mm_{\bar{x}}} \leq \langle \Txb(x(t)), x(t)-\xbar \rangle.
    \]
    Thus, our result in \cref{thm:convergence_sm}\ref{it:hd-it1} directly recovers the exponential convergence rate of the objective function values.

    \item In contrast to works addressing inertial dynamics in the presence of errors and perturbations, such as \cite{Attouch-Fadili-Kungertsev,Attouch&Laszlo2}, \cref{thm:convergence_sm} requires no integrability assumptions on the error terms $\Exb$ and $\frac{\dd }{\dd t}\Exb$. Indeed, thanks to \cref{lemm:1,lemm:2}, these error terms are structurally controlled via Lipschitz continuity and are essentially absorbed into the dissipation terms of the Lyapunov analysis.
    \end{itemize}
\end{remark}
		
\begin{proof}
    To lighten the presentation, we denote $\T \coloneqq \Txb$ and $\Err \coloneqq \Exb$, and we use the notation $\partial_t$ instead of $\frac{\dd }{\dd t}$.
    Differentiating $\mathcal{V}(t)$ with respect to time yields:
    \begin{equation}
        \begin{aligned}
            \dot{\V}(t) &= \langle \partial_t \T (x(t)),x(t) - \xbar\rangle + \langle \T(x(t)),\dot{x}(t)\rangle +\langle v(t),\sqrt{\tmu}\dot{x}(t) + \ddot{x}(t) \rangle\\
            & =\langle \partial_t \T (x(t)),x(t) - \xbar\rangle + \langle \T(x(t)),\dot{x}(t)\rangle \\
            &\quad +\langle v(t),-\sqrt{\tmu}\dot{x}(t) - \T(x(t)) -\omega \partial_t \T(x(t)) -\Err(x(t)) -\omega\partial_t \Err(x(t)) \rangle.
        \end{aligned}
    \end{equation}
    Replacing $v(t)$ by its expression $\sqrt{\tmu}(x(t) - \bar{x}) + \dot{x}(t)$ and rearranging the terms, we arrive at:
    \begin{align}
        &\dot{\V}(t) +\sqrt{\tmu}\left( \sqrt{\tmu}\langle\dot{x}(t),x(t)-\bar{x}\rangle +\Vert\dot{x}(t)\Vert^2 +\langle \T(x(t)),x(t)-\bar{x}\rangle \right) \nonumber\\
        &= - \langle v(t),\Err(x(t))+\omega\partial_t\Err(x(t))\rangle +\delta\langle \partial_t \T (x(t)),x(t) - \xbar\rangle - \omega\langle\dot{x}(t),\partial_t\T(x(t))\rangle, \label{eq:der_V}
    \end{align} 
    where $\delta \coloneqq 1-\omega\sqrt{\tmu}$. Notice that $\delta >0$ under the assumption $\omega < \frac{1}{2\sqrt{\tmu}}$ from \eqref{eq:cond_omega}. By the strong monotonicity of $\T$, we have:
    \[
        -\omega\left\langle\dot{x}(t), \partial_t \T(x(t))\right\rangle = -\lim_{h \rightarrow 0}\omega\left\langle\frac{x(t+h)-x(t)}{h}, \frac{\T(x(t+h))-\T(x(t))}{h}\right\rangle \leq 0.
    \] 
    Thus, \eqref{eq:der_V} becomes:
    \begin{equation}\label{eq:V1}
        \dot{\V}(t) + \sqrt{\tmu} \Theta(t) \leq \Vert v(t)\Vert\Vert\Err(x(t))+\omega\partial_t\Err(x(t))\Vert + \delta\Vert \partial_t \T (x(t))\Vert \Vert x(t) - \xbar\Vert,
    \end{equation}
    where we defined:
    \begin{align*}
        \Theta(t) \coloneqq \langle \T(x(t)),x(t)-\bar{x}\rangle + \sqrt{\tmu} \langle\dot{x}(t),x(t)-\bar{x}\rangle +\Vert\dot{x}(t)\Vert^2.
    \end{align*}
    We immediately rewrite $\Theta(t)$ in terms of $\V(t)$ as:
    \[
        \Theta(t) = \V(t) +\frac{1}{2}\Vert\dot{x}(t)\Vert^2 -\frac{\tmu}{2}\Vert x(t) -\xbar\Vert^2.
    \]
    Consequently, \eqref{eq:V1} becomes:
    \begin{align}
        &\dot{\V}(t) + \sqrt{\tmu} \V(t) + \frac{\sqrt{\tmu}}{2}\Vert\dot{x}(t)\Vert^2 -\frac{ \tmu\sqrt{\tmu}}{2}\Vert x(t) -\xbar\Vert^2 \nonumber \\
        &\leq \Vert v(t)\Vert\Vert\Err(x(t))+\omega\partial_t\Err(x(t))\Vert + \delta\Vert \partial_t \T (x(t))\Vert \Vert x(t) - \xbar\Vert. \label{eq:V1_bis}
    \end{align}
    Using the $\tmu$-strong monotonicity of $\T$ and discarding the quadratic term in the definition of $\V(t)$, we can lower bound the energy by:
    \[
        \V(t) = \frac{1}{4}\V(t) + \frac{3}{4}\V(t) \geq \frac{1}{4}\V(t) + \frac{3}{4}\langle \T(x(t)), x(t) - \bar{x}\rangle \geq \frac{1}{4}\V(t) + \frac{3\tmu}{4}\Vert x(t) - \bar{x}\Vert^2.
    \]
    Injecting this into the $\sqrt{\tmu}\V(t)$ term on the left-hand side of \eqref{eq:V1_bis}, we end up with:
    \begin{align}
        &\dot{\V}(t)+\frac{\sqrt{\tmu}}{4}\V(t) + \frac{\sqrt{\tmu}}{2}\Vert\dot{x}(t)\Vert^2 + \frac{\tmu\sqrt{\tmu}}{4}\Vert x(t) - \bar{x}\Vert^2 \nonumber\\
        &\leq \Vert v(t)\Vert\Vert\Err(x(t))+\omega\partial_t\Err(x(t))\Vert + \delta\Vert \partial_t \T (x(t))\Vert \Vert x(t) - \xbar\Vert. \label{eq:der_V_1}
    \end{align}
    Now let us bound the right-hand side of this inequality. From \cref{lemm:1}\ref{it:lemm:1-2}, we have:
    \[
        \Vert \Err(x(t))\Vert = \Vert \Err(x(t)) - \Err(\bar{x})\Vert \leq \frac{\gamma\beta\tau}{\lambda}\Vert x(t)-\bar{x}\Vert,
    \]
    and from \cref{lemm:2}\ref{it:lemm2-2}:
    \[
        \Vert\partial_t\Err(x(t)) \Vert \leq \frac{c_0}{\lambda}\|\dot{x}(t)\|,
    \]
    where $c_0 = 2+ 2\gamma L + \gamma \beta \tau$. Since $\V(t)\geq \frac{1}{2}\Vert v(t)\Vert^2$, applying Young's inequality to the first product on the right-hand side yields:
    \begin{equation}
        \begin{aligned}
            \Vert v(t)\Vert\Vert\Err(x(t))+\omega\partial_t\Err(x(t))\Vert 
            &\leq \frac{\sqrt{\tmu}}{16}\Vert v(t)\Vert^2 + \frac{8}{\sqrt{\tmu}}\Vert\Err(x(t))\Vert^2+\frac{8\omega^2}{\sqrt{\tmu}}\Vert\partial_t \Err(x(t))\Vert^2\\ 
            &\leq \frac{\sqrt{\tmu}}{8}\V(t) + \frac{8\gamma^2\beta^2\tau^2}{\sqrt{\tmu}\lambda^2}\Vert x(t) - \bar{x}\Vert^2+\frac{8\omega^2 c_{0}^{2}}{\sqrt{\tmu}\lambda^2}\Vert \dot{x}(t)\Vert^2.
        \end{aligned}
    \end{equation}
    Plugging this back, \eqref{eq:der_V_1} becomes:
    \begin{equation}\label{eq:der_V_2}
        \dot{\V}(t)+\frac{\sqrt{\tmu}}{8}\V(t) + \BF{a}\Vert\dot{x}(t)\Vert^2 + \BF{b}\Vert x(t) - \bar{x}\Vert^2 \leq \delta\Vert \partial_t \T (x(t))\Vert \Vert x(t) - \xbar\Vert,
    \end{equation}
    with $\BF{a} \coloneqq \frac{\sqrt{\tmu}}{2} - \frac{8\omega^2 c_{0}^{2}}{\sqrt{\tmu}\lambda^2}$ and $\BF{b} \coloneqq \frac{\tmu\sqrt{\tmu}}{4} - \frac{8\gamma^2\beta^2\tau^2}{\sqrt{\tmu}\lambda^2}$. 
    We verify that $\BF{a} > 0$ under the assumption $\omega < \frac{\lambda\sqrt{\tmu}}{4 c_0}$ (implied by \eqref{eq:cond_omega}), and $\BF{b} > 0$ under the assumption $\rho = \frac{\beta\tau}{\tmu} < \frac{\sqrt{2}\lambda}{8\gamma}$ from \eqref{eq:cond_rho}.
		
    As for the term $\Vert \partial_t \T (x(t))\Vert \Vert x(t) - \xbar\Vert$, we have by \cref{lemm:T}\ref{it:lemT_3}:
    \[
        \Vert\partial_t\T(x(t))\Vert \leq \frac{4}{\lambda}\Vert\dot{x}(t)\Vert.
    \]
    Using Young's inequality parameterized by some $\epsilon > 0$, we have:
    \[
        \delta\Vert \partial_t \T (x(t))\Vert \Vert x(t) - \xbar\Vert \leq \frac{4\delta}{\lambda}\Vert\dot{x}(t)\Vert\Vert x(t) - \xbar\Vert \leq \frac{2\delta\epsilon}{\lambda}\Vert\dot{x}(t)\Vert^2 + \frac{2\delta}{\epsilon\lambda} \Vert x(t) - \xbar\Vert^2.
    \]
    We get after rearranging all terms:
    \begin{equation}\label{eq:V2}
        \dot{\V}(t) + \frac{\sqrt{\tmu}}{8}\V(t) + \left(\BF{a}-\frac{2\delta\epsilon}{\lambda}\right)\Vert\dot{x}(t)\Vert^2 + \left(\BF{b} - \frac{2\delta}{\epsilon\lambda}\right) \Vert x(t) - \xbar\Vert^2 \leq 0.
    \end{equation}
    We see that $\BF{a} - \frac{2\delta\epsilon}{\lambda} > 0$ requires $\epsilon < \frac{\lambda\BF{a}}{2\delta}$, and $\BF{b} - \frac{2\delta}{\epsilon\lambda} > 0$ requires $\epsilon > \frac{2\delta}{\lambda\BF{b}}$. Consequently, such an $\epsilon$ exists if and only if the interval $I \coloneqq (\frac{2\delta}{\lambda\BF{b}}, \frac{\lambda\BF{a}}{2\delta})$ is non-empty, which is guaranteed under our conditions \eqref{eq:conditions_globales}.
    Choosing $\epsilon \in I$, the quadratic terms are positive and can be discarded, hence:
    \[
        \dot{\V}(t) + \frac{\sqrt{\tmu}}{8}\V(t) \leq 0.
    \]
    Integrating this inequality from $t_0$ to $t$ gives:
    \begin{equation}\label{eq:estimate_V}
        \V(t)\leq \V(t_0) e^{-\frac{\sqrt{\tmu}}{8}(t-t_0)}.
    \end{equation}
    Therefore, $\lim_{t\to\infty} \V(t) = 0$, and in particular,
    \begin{equation}\label{eq:est1}
        \lim_{t\to\infty} \langle \T(x(t)),x(t) - \xbar\rangle = 0 \quad \text{and} \quad \lim_{t\to\infty}\Vert v(t)\Vert = 0.
    \end{equation}
    By the definition of $\V$ and the $\tmu$-strong monotonicity of $\T$ we get:
    \begin{equation}\label{eq:est2}
        \tmu \|x(t)-\bar{x}\|^2 \leq \langle \T(x(t)),x(t) - \xbar\rangle \leq \V(t_0) e^{-\frac{\sqrt{\tmu}}{8}(t-t_0)} \quad \text{and} \quad \Vert v(t)\Vert^2 \leq 2\V(t_0) e^{-\frac{\sqrt{\tmu}}{8}(t-t_0)}.
    \end{equation}
    Moreover, by the $\lambda/2$-cocoercivity of $\T$, we get:
    \[
        \frac{\lambda}{2}\Vert \T(x(t))\Vert^2 \leq \langle \T(x(t)),x(t) - \xbar\rangle \leq \V(t_0) e^{-\frac{\sqrt{\tmu}}{8}(t-t_0)}.
    \]
    Again, coming back to \eqref{eq:est2}, we have:
    \[
        \Vert v(t)\Vert \leq C e^{-\frac{\sqrt{\tmu}}{16}t}
    \]
    with $C = \sqrt{2\V(t_0)} e^{\frac{\sqrt{\tmu}}{16}t_0}$. Since $\dot{x}(t) = v(t) - \sqrt{\tmu}(x(t)-\xbar)$, the triangle inequality gives:
    \[
        \Vert\dot{x}(t)\Vert \leq \Vert v(t) \Vert + \sqrt{\tmu}\Vert x(t) - \xbar\Vert \leq C_1 e^{-\frac{\sqrt{\tmu}}{16}t}.
    \]
    To prove the integral estimate \cref{it:hd-it2}, we use that by cocoercivity of $\T$ combined with \eqref{eq:est2}:
    \[
        \Vert \T(x(t))\Vert^2 \leq \frac{4}{\lambda^2} \Vert x(t) - \xbar\Vert^2 \leq C' e^{-\frac{\sqrt{\tmu}}{8}t},
    \]
    with $C' = \frac{4}{\tmu\lambda^2} \V(t_0)e^{\frac{\sqrt{\tmu}}{8}t_0}$. Integrating this against the exponential weight gives:
    \[
        \int_{t_0}^{t}e^{\sqrt{\tmu}s}\Vert \T(x(s))\Vert^2\dd s \leq C'\int_{t_0}^{t} e^{\frac{7\sqrt{\tmu}}{8}s} \dd s \leq \frac{8}{7\sqrt{\tmu}} C' e^{\frac{7\sqrt{\tmu}}{8}t}.
    \]
    Multiplying by $e^{-\sqrt{\tmu}t}$ finally yields:
    \[
        e^{-\sqrt{\tmu} t}\int_{t_0}^{t} e^{\sqrt{\tmu}s}\Vert\T(x(s))\Vert^2 \dd s \leq C_2 e^{-\frac{\sqrt{\tmu}}{8}t},
    \]
    as desired.
\end{proof}

Similar to \cref{cor:measure_cv1}, we end this section by providing the exponential rate of convergence of the decision-dependent measure $\mm_{x(t)}$ to the equilibrium measure $\mm_{\bar{x}}$ in the $\Wass_1$ distance. This is a direct consequence of \cref{assu:W1} and the trajectory decay established in \cref{thm:convergence_sm}\ref{it:hd-it1}.

\begin{corollary}\label{cor:W1_rate2}
    Let $x: [t_0, \infty) \to \cH$ be the trajectory generated by \eqref{eq:dynamic-sm} under the conditions of \cref{thm:convergence_sm}. Then, for all $t \geq t_0$:
    \[
        \Wass_{1}(\mm_{x(t)},\mm_{\bar{x}}) \leq \tau\sqrt{\frac{\V(t_0)}{\tmu}} e^{-\frac{\sqrt{\tmu}}{16} (t-t_0)}.
    \]
\end{corollary}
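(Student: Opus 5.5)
The bound follows directly from \cref{assu:W1} combined with the trajectory estimate of \cref{thm:convergence_sm}\ref{it:hd-it1}. First, for every $t\geq t_0$, \cref{assu:W1} applied with $x=x(t)$ and $y=\bar x$ gives
\[
\Wass_1(\mm_{x(t)},\mm_{\bar x})\leq \tau\Vert x(t)-\bar x\Vert .
\]
So it suffices to bound $\Vert x(t)-\bar x\Vert$ explicitly.

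Second, we use the chain of inequalities in \cref{thm:convergence_sm}\ref{it:hd-it1}. It combines the $\tmu$-strong monotonicity of $\Txb$ from \cref{lemma:uniform_monotony}, the fact that $\Txb(\bar x)=0$, and the exponential decay of $\V$:
\[
\tmu\Vert x(t)-\bar x\Vert^2\leq \langle \Txb(x(t)),x(t)-\bar x\rangle\leq \V(t_0)e^{-\frac{\sqrt{\tmu}}{8}(t-t_0)} .
\]
Since $\tmu>0$, dividing by $\tmu$ and taking square roots gives
\[
\Vert x(t)-\bar x\Vert\leq \sqrt{\V(t_0)/\tmu}\,e^{-\frac{\sqrt{\tmu}}{16}(t-t_0)} .
\]
Square roots are legitimate here because $\V(t_0)\geq 0$: this follows from $\langle\Txb(x),x-\bar x\rangle\geq 0$ together with the nonnegative quadratic term $\frac12\|v\|^2$.

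Finally, inserting this bound into the first inequality and multiplying by $\tau$ yields the claimed estimate. No step is a genuine obstacle. The only point to check is that the exponent halves correctly when passing from the squared norm to the norm, which gives $\frac{\sqrt{\tmu}}{16}$.
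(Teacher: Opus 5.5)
Your proof is correct and follows the paper's own argument. The paper also obtains the bound directly from \cref{assu:W1} and the estimate $\tmu\Vert x(t)-\bar x\Vert^2\leq \V(t_0)e^{-\frac{\sqrt{\tmu}}{8}(t-t_0)}$ of \cref{thm:convergence_sm}\ref{it:hd-it1}, taking square roots to halve the exponent.
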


\section*{Acknowledgments}

The work of H.E. was supported by the FMJH Program PGMO (grant no. P-2024-0019).
\appendix
\section*{Appendices}

\section{ Fixed point theorems}

						\begin{theorem}[see, \eg \cite{ABG,brezisFA}]\label{thm:fixedpoint}
				Let $(\X,\dd)$ be a complete metric space and $S:\X\to\X$ be a strict contraction, \ie there exists a constant $\rho<1$ such that
				\[
				\dd(S(x),S(y))\leq \rho \dd(x,y), \forall x,y\in\X.
				\]
				Then, there exists a unique $\bar{x}\in\X$ such that $S(\bar{x}) = \bar{x}$. Moreover, for any $x_0\in\X$, the sequence starting from $x_0$ with $x_{n+1} = S(x_n)$ for all $n\in\nats$ converges to $\bar{x}$ as $n$ goes to $\infty$.
			\end{theorem}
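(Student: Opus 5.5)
The statement is the classical Banach fixed-point theorem in a complete metric space $(\X,\dd)$. I will construct the fixed point as the limit of Picard iterates and then show it is unique.

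\emph{Step 1: the iterates form a Cauchy sequence.} Fix $x_0\in\X$ and set $x_{n+1}=S(x_n)$. Applying the contraction inequality $n$ times gives $\dd(x_{n+1},x_n)\leq \rho^n\dd(x_1,x_0)$. For $m>n$, the triangle inequality and the geometric series give
\[
\dd(x_m,x_n)\leq \sum_{k=n}^{m-1}\rho^k\dd(x_1,x_0)\leq \frac{\rho^n}{1-\rho}\dd(x_1,x_0).
\]
This tends to $0$ as $n\to\infty$ because $\rho<1$. If $\rho<0$ were allowed, the contraction inequality would force $S$ to be constant, so I may assume $0\le\rho<1$. Hence $(x_n)$ is Cauchy, and by completeness it converges to some $\bar x\in\X$.

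\emph{Step 2: the limit is a fixed point.} The map $S$ is $\rho$-Lipschitz and therefore continuous. Passing to the limit in $x_{n+1}=S(x_n)$ gives $\bar x=S(\bar x)$. Equivalently, one can estimate $\dd(S(\bar x),\bar x)\leq \rho\,\dd(\bar x,x_n)+\dd(x_{n+1},\bar x)\to 0$.

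\emph{Step 3: uniqueness.} Suppose $\bar x$ and $\bar y$ are both fixed points. Then $\dd(\bar x,\bar y)=\dd(S(\bar x),S(\bar y))\leq\rho\,\dd(\bar x,\bar y)$, so $(1-\rho)\dd(\bar x,\bar y)\leq 0$. Since $1-\rho>0$, this forces $\bar x=\bar y$.

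Uniqueness also shows that the limit does not depend on the starting point. Since $x_0$ was arbitrary, Step 1 gives the convergence claim for every starting point $x_0$. All steps are routine; the only point needing care is the geometric-series bound in Step 1, which is what makes the sequence Cauchy.
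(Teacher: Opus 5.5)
Your argument is correct and complete. It is the standard Picard-iteration proof: the geometric bound $\dd(x_{n+1},x_n)\le\rho^n\dd(x_1,x_0)$ makes the iterates Cauchy, completeness gives a limit, continuity of $S$ shows the limit is a fixed point, and $(1-\rho)\dd(\bar x,\bar y)\le 0$ gives uniqueness. The paper does not prove this result itself; it cites standard references, where this same argument appears (with $\X\neq\emptyset$ assumed implicitly, as your choice of $x_0$ also assumes).
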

			\begin{theorem}[see, \eg \cite{Boyd&Wong}]\label{thm:boyd-wong}
    Let $(\X,\dd)$ be a complete metric space and $S:\X\to\X$ be a nonlinear contraction, \ie there exists a continuous function $\psi: [0, \infty) \to [0, \infty)$ satisfying $\psi(0) = 0$ and $\psi(t) < t$ for all $t > 0$, such that
    \[
        \dd(S(x),S(y))\leq \psi(\dd(x,y)), \forall x,y\in\X.
    \]
    Then, there exists a unique $\bar{x}\in\X$ such that $S(\bar{x}) = \bar{x}$. Moreover, for any $x_0\in\X$, the sequence starting from $x_0$ with $x_{n+1} = S(x_n)$ for all $n\in\nats$ converges to $\bar{x}$ as $n$ goes to $\infty$.
\end{theorem}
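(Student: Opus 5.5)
The statement is the last theorem in the appendix, \cref{thm:boyd-wong} (the Boyd--Wong fixed point theorem). I would prove it directly, without appealing to \cref{thm:fixedpoint}. Fix $x_0\in\X$, set $x_{n+1}=S(x_n)$, and write $d_n\coloneqq\dd(x_{n+1},x_n)$; the function $\psi$ is the one in the statement.

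\textbf{Uniqueness.} If $S(x)=x$ and $S(y)=y$ with $x\neq y$, then
\[
\dd(x,y)=\dd(S(x),S(y))\leq\psi(\dd(x,y))<\dd(x,y),
\]
which is a contradiction.

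\textbf{Step 1: $d_n\to0$.} We have $d_{n+1}\leq\psi(d_n)\leq d_n$, using $\psi(0)=0$ when $d_n=0$. So $(d_n)$ is nonincreasing and converges to some $r\geq0$. Passing to the limit in $d_{n+1}\leq\psi(d_n)$ and using continuity of $\psi$ gives $r\leq\psi(r)$. This forces $r=0$.

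\textbf{Step 2: $(x_n)$ is Cauchy.} This is the main obstacle. Suppose not. Then there exist $\varepsilon>0$ and indices $m_k>n_k\geq k$ with $\dd(x_{m_k},x_{n_k})\geq\varepsilon$. Choose $m_k$ minimal, so that $\dd(x_{m_k-1},x_{n_k})<\varepsilon$. The triangle inequality gives
\[
\varepsilon\leq\dd(x_{m_k},x_{n_k})<\varepsilon+d_{m_k-1},
\]
and since $d_{m_k-1}\to0$ by Step 1, we get $c_k\coloneqq\dd(x_{m_k},x_{n_k})\to\varepsilon$. Next,
\[
c_k\leq d_{m_k}+\dd(x_{m_k+1},x_{n_k+1})+d_{n_k}\leq d_{m_k}+\psi(c_k)+d_{n_k}.
\]
Letting $k\to\infty$ and using continuity of $\psi$ yields $\varepsilon\leq\psi(\varepsilon)$. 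This contradicts $\psi(\varepsilon)<\varepsilon$. The delicate point is exactly that only continuity of $\psi$ is available, not monotonicity. This is why the argument passes through the sequence $c_k$ converging to $\varepsilon$, rather than through a uniform contraction factor on $[\varepsilon,\infty)$.

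\textbf{Step 3: the limit is the fixed point.} By completeness, $x_n\to\bar x$ for some $\bar x\in\X$. Note that $S$ is continuous, since
\[
\dd(S(x),S(y))\leq\psi(\dd(x,y))\leq\dd(x,y)
\]
(again using $\psi(0)=0$ when $x=y$). Hence $\bar x=\lim x_{n+1}=\lim S(x_n)=S(\bar x)$. Together with uniqueness, this also gives the convergence of the iterates from any starting point.
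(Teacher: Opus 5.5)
Your proof is correct and complete. The paper gives no proof of this statement: it is quoted in the appendix with a reference to Boyd and Wong and used as a black box in \cref{prop:existence-equil-UM}. There is therefore nothing to compare step by step. What you wrote is essentially the classical Boyd--Wong argument: monotone decay of $d_n$ to $0$, then the minimal-index construction forcing $c_k\to\varepsilon$, then the three-term triangle inequality contradicting $\psi(\varepsilon)<\varepsilon$.

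One remark on what your argument actually uses. In both places where you pass to the limit inside $\psi$, the arguments approach the limit from above: $d_n\downarrow r$, and $c_k\geq\varepsilon$ with $c_k\to\varepsilon$. So your proof only needs $\psi$ to be upper semicontinuous from the right, which is Boyd and Wong's original hypothesis; continuity is more than you need.

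A minor point in Step~2: minimality of $m_k$ gives $\dd(x_{m_k-1},x_{n_k})<\varepsilon$ also in the edge case $m_k-1=n_k$, where this distance is simply $0$. Your argument covers this case implicitly.
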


\bibliographystyle{plainurl}

\bibliography{smi_splt}

\end{document}